
\documentclass[10pt,reqno]{amsart}

\usepackage[utf8x]{inputenc}
\usepackage[english]{babel}
\usepackage{amsmath,amsfonts,amssymb,amsthm,shuffle}
\usepackage[T1]{fontenc}
\usepackage{lmodern}

\usepackage[top=3.5cm,bottom=3.5cm,left=3.4cm,right=3.4cm]{geometry}

\usepackage[dvipsnames]{xcolor}
\usepackage[hyperindex=true,frenchlinks=true,colorlinks=true,
citecolor=NavyBlue,linkcolor=Mulberry,urlcolor=LimeGreen,linktocpage]{hyperref}

\usepackage{tikz}

\usepackage{cite}
\usepackage{enumitem}
\usepackage{dsfont}

\linespread{1.15}

\numberwithin{equation}{subsection}

\setcounter{tocdepth}{2}

\newtheorem{Theoreme}{Theorem}[section]
\newtheorem{Proposition}[Theoreme]{Proposition}
\newtheorem{Lemme}[Theoreme]{Lemma}

\title{Combinatorial Hopf algebras from PROs}
\keywords{Operad; PRO; Hopf algebra; Noncommutative symmetric function;
Faà di Bruno algebra.}
\subjclass[2010]{05E99, 05E05, 05C05, 57T05, 18D50.}
\date{\today}
\author{Jean-Paul Bultel}
\address{Laboratoire d'Informatique, du Traitement de
    l'Information et des Systèmes, Université de Rouen, Avenue de
    l'Université, 76801 Saint-Étienne-du-Rouvray cedex, France.}
\email{jean-paul.bultel@univ-rouen.fr}
\author{Samuele Giraudo}
\thanks{Phone number and email address of the corresponding author:
+33160957558, {\tt samuele.giraudo@u-pem.fr}}
\address{Laboratoire d'Informatique Gaspard-Monge, Université
    Paris-Est Marne-la-Vallée, 5 boulevard Descartes, Champs-sur-Marne,
    77454 Marne-la-Vallée cedex 2, France.}
\email{samuele.giraudo@u-pem.fr}

\renewcommand{\leq}{\leqslant}
\renewcommand{\geq}{\geqslant}

\newcommand{\Pca}{\mathcal{P}}

\newcommand{\Hca}{\mathcal{H}}
\newcommand{\Oca}{\mathcal{O}}
\newcommand{\Mca}{\mathcal{M}}
\newcommand{\Unite}{\mathds{1}}
\newcommand{\In}{\operatorname{i}}
\newcommand{\Out}{\operatorname{o}}
\newcommand{\Sbf}{\mathbf{S}}

\newcommand{\Tbf}{\mathbf{T}}
\newcommand{\Rit}{\mathit{R}}
\newcommand{\Tit}{\mathit{T}}
\newcommand{\PvH}{\mathsf{H}}
\newcommand{\OvH}{\mathit{H}}
\newcommand{\OvP}{\mathsf{R}}
\newcommand{\MvP}{\mathsf{B}}
\newcommand{\Vect}{\operatorname{Vect}}
\newcommand{\Reduit}{\operatorname{red}}

\newcommand{\C}{\mathbb{C}}
\newcommand{\R}{\mathbb{R}}
\newcommand{\N}{\mathbb{N}}
\newcommand{\EnsNat}{\mathbb{N}}

\newcommand{\PRT}{\mathsf{PRT}}
\newcommand{\PRF}{\mathsf{PRF}}
\newcommand{\Dec}{\operatorname{dec}}
\newcommand{\La}{\mathtt{a}}
\newcommand{\Lb}{\mathtt{b}}
\newcommand{\Adm}{\mathrm{Adm}}
\newcommand{\Sat}{\mathsf{S}}
\newcommand{\Op}{\mathrm{op}}
\newcommand{\Free}{\mathrm{Free}}
\newcommand{\Rev}{\mathrm{rev}}

\newcommand{\CK}{\mathbf{CK}}
\newcommand{\Sym}{\mathit{Sym}}
\newcommand{\SymNC}{\mathbf{Sym}}
\newcommand{\FdB}{\mathit{FdB}}
\newcommand{\FdBNC}{\mathbf{FdB}}

\newcommand{\AB}{\mathsf{AB}}
\newcommand{\As}{\mathsf{As}}
\newcommand{\Heap}{\mathsf{Heap}}
\newcommand{\FHeap}{\mathsf{FHeap}}
\newcommand{\FBT}{\mathsf{FBT}}
\newcommand{\BAs}{\mathsf{BAs}}

\newcommand{\Sloane}[1]{\href{http://oeis.org/#1}{{\bf #1}}}

\definecolor{Noir}{RGB}{0,0,0}
\definecolor{Blanc}{RGB}{255,255,255}
\definecolor{Rouge}{RGB}{205,35,38}
\definecolor{Bleu}{RGB}{2,60,195}
\definecolor{Vert}{RGB}{23,163,1}
\definecolor{Violet}{RGB}{181,18,225}
\definecolor{Orange}{RGB}{255,113,15}
\definecolor{Marron}{RGB}{52,46,0}
\definecolor{Bordeaux}{RGB}{149,14,0}

\tikzstyle{Noeud} = [circle,draw=Bleu!80,fill=Bleu!20,thick,inner sep=0pt,
minimum size=10mm,line width=4pt,font=\Huge]
\tikzstyle{Arete}=[Rouge!80,cap=round,line width=3pt]
\tikzstyle{Feuille}=[rectangle,draw=Noir!70,fill=Noir!20,minimum size=3mm,
line width=2pt]
\tikzstyle{Operateur} = [rectangle,rounded corners,draw=Bleu!100,fill=Bleu!10,
minimum size=15mm,line width=3pt,font=\Huge]
\tikzstyle{Marque1} = [draw=Vert!100,fill=Vert!30]
\tikzstyle{Marque2} = [draw=Orange!100,fill=Orange!40]
\tikzstyle{Marque3} = [draw=Marron!100,fill=Marron!20]
\tikzstyle{Marque4} = [draw=Bordeaux!100,fill=Bordeaux!30]
\tikzstyle{Boite} = [rectangle,rounded corners,draw=Bleu!100,fill=Bleu!10,
minimum width=1cm,minimum height=1cm,line width=3pt,font=\Huge,text centered]
\tikzstyle{Domino3} = [rectangle,draw=Bordeaux!100,fill=Bordeaux!30,
minimum width=3cm,minimum height=.5cm,line width=2pt,font=\Huge]
\tikzstyle{Domino2} = [rectangle,draw=Bordeaux!100,fill=Bordeaux!30,
minimum width=2cm,minimum height=.5cm,line width=2pt,font=\Huge]
\tikzstyle{Domino1} = [rectangle,draw=Vert!100,fill=Vert!30,
minimum width=1cm,minimum height=.5cm,line width=2pt,font=\Huge]
\tikzstyle{Injection} = [Noir!100,draw,>->]
\tikzstyle{Surjection} = [Noir!100,draw,->>]

\begin{document}
\maketitle

\begin{abstract}
    We introduce a general construction that takes as input a so-called
    stiff PRO and that outputs a Hopf algebra. Stiff PROs are particular
    PROs that can be described by generators and relations with precise
    conditions. Our construction generalizes the classical construction
    from operads to Hopf algebras of van der Laan. We study some of its
    properties and review some examples of application. We get in particular
    Hopf algebras on heaps of pieces and retrieve some deformed versions
    of the noncommutative Fa\`a di Bruno algebra introduced by Foissy.
\end{abstract}

\tableofcontents

\section*{Introduction}
Operads are algebraic structures introduced in the 70s  by Boardman and
Vogt \cite{BV73} and by May \cite{May72} in the context of algebraic
topology to offer a formalization of the notion of operators and their
composition (see \cite{Mar08} and \cite{LV12} for a very complete
presentation of the theory of operads). Operads provide therefore a unified
framework to study some sorts of {\em a priori} very different algebras,
such as associative algebras, Lie algebras, and commutative algebras. Besides,
the theory of operads is also beneficial in combinatorics \cite{Cha08}
since it provides some ways to decompose combinatorial objects into
elementary pieces. On the other hand, the theory of Hopf algebras holds a
special place in algebraic combinatorics \cite{JR79}. In recent years, many
Hopf algebras were defined and studied, and most of these involve very famous
combinatorial objects such as permutations \cite{MR95,DHT02}, standard
Young tableaux \cite{PR95,DHT02}, or binary trees \cite{LR98,HNT05}.
\smallskip

These two theories ---operads and Hopf algebras--- have several
interactions. One of these is a construction \cite{Vdl04} taking an operad
$\Oca$ as input and producing a bialgebra $\OvH(\Oca)$ as output, which is
called the {\em natural bialgebra} of $\Oca$. This construction has been
studied in some recent works: in \cite{CL07}, it is shown that $\OvH$ can
be rephrased in terms of an incidence Hopf algebra of a certain family of
posets, and in \cite{ML13}, a general formula for its antipode is
established. Let us also cite \cite{Fra08} in which this construction is
considered to study series of trees.
\smallskip

The initial motivation of our work was to generalize this $\OvH$
construction with the aim of constructing some new and interesting Hopf
algebras. The direction we have chosen is to start with {\em PROs},
algebraic structures which generalize operads in the sense that PROs deal
with operators with possibly several outputs. Surprisingly, these structures
appeared earlier than operads in the work of Mac Lane \cite{McL65}.
Intuitively, a PRO $\Pca$ is a set of operators together with two
operations: an horizontal composition and a vertical composition. The first
operation takes two operators $x$ and $y$ of $\Pca$ and builds a new one
whose inputs (resp. outputs) are, from left to right, those of $x$ and
then those of $y$. The second operation takes two operators $x$ and $y$
of $\Pca$ and produces a new one obtained by plugging the outputs of $y$
onto the inputs of $x$. Basic and modern references about PROs
are \cite{Lei04} and \cite{Mar08}.
\smallskip

Our main contribution consists in the definition of a new construction
$\PvH$ from PROs to bialgebras. Roughly speaking, the construction $\PvH$
can be described as follows. Given a PRO $\Pca$ satisfying some mild
properties, the bialgebra $\PvH \Pca$ has bases indexed by a particular
subset of elements of $\Pca$. The product of $\PvH \Pca$ is the horizontal
composition of $\Pca$ and the coproduct of $\PvH \Pca$ is defined from
the vertical composition of $\Pca$, enabling to separate a basis element
into two smaller parts. The properties satisfied by $\Pca$ imply, in a
nontrivial way, that the product and the coproduct of $\PvH \Pca$ satisfy
the required axioms to be a bialgebra. This construction generalizes $\OvH$
and establishes a new connection between the theory of PROs and the theory
of Hopf algebras.
\smallskip

Our results are organized as follows. In
Section \ref{sec:background}, we recall some general background about
Hopf algebras, operads, and PROs. In particular, we give a description
of free PROs in terms of {\em prographs}, similar to the one of
Lafont \cite{Laf11}. We also recall the natural bialgebra construction
of an operad. We provide in Section \ref{sec:construction_H} the
description of our new construction $\PvH$. A first version of this
construction is presented, associating a bialgebra $\PvH(\Pca)$ with
a free PRO $\Pca$. We then present an extended version of the construction,
taking as input non-necessarily free PROs satisfying some properties,
called {\em stiff PROs}. Next, we consider two well-known constructions
of PROs \cite{Mar08}, one, $\OvP$, taking as input operads and the other,
$\MvP$, taking as input monoids. We prove that under some mild conditions,
these constructions produce stiff PROs. We establish that the natural
bialgebra of an operad can be reformulated as a particular case of our
construction $\PvH$. We conclude by giving some examples of application
of $\PvH$ in Section \ref{sec:exemples}. The Hopf algebras that we obtain
are very similar to the Connes-Kreimer Hopf algebra \cite{CK98} in the
sense that their coproduct can be computed by means of admissible cuts
in various combinatorial objects. From very simple stiff PROs, we
reconstruct the Hopf algebra of noncommutative symmetric functions
$\SymNC$ \cite{GKLLRT95,KLT97} and the noncommutative Fàa di Bruno
algebra $\FdBNC_1$ \cite{BFK06}. Besides, we present a way of using
$\PvH$ to reconstruct some of the Hopf algebras $\FdBNC_\gamma$, a
$\gamma$-deformation of $\FdBNC_1$ introduced by Foissy \cite{Foi08}. We
also obtain several other Hopf algebras, which, respectively, involve forests
of planar rooted trees, some kind of graphs consisting of nodes with one
parent and several children or several parents and one child that we call
{\em forests of bitrees}, heaps of pieces (see \cite{Vie86} for a general
presentation of these combinatorial objects), and a particular class of
heaps of pieces that we call {\em heaps of friable pieces}. All these
Hopf algebras depend on a nonnegative integer as parameter.
\smallskip

{\it Acknowledgments.} The authors would like to thank Jean-Christophe
Novelli for his suggestions during the preparation of this paper.
Moreover, the authors would like to thank the anonymous referee for its
useful suggestions, improving the paper. The computations of this work
have been done with the open-source mathematical software Sage~\cite{Sage}
and one of its extensions, Sage-Combinat~\cite{SageC}.
\medskip

{\it Notations.} For any integer $n \geq 0$, $[n]$ denotes the set
$\{1, \dots, n\}$. If $u$ is a word and $i$ is a positive integer no
greater than the length of $u$, $u_i$ denotes the $i$-th letter of $u$.
The empty word is denoted by $\epsilon$.
\medskip

\section{Algebraic structures and background}
\label{sec:background}
We recall in this preliminary section some basics about the algebraic
structures in play in all this work, {\em i.e.}, Hopf algebras, operads,
and PROs. We also present some well-known Hopf algebras and recall the
construction associating a combinatorial Hopf algebra with an operad.
\medskip

\subsection{Combinatorial Hopf algebras}
In the sequel, all vector spaces have $\C$ as ground field. By {\em algebra}
we mean a unitary associative algebra and by {\em coalgebra} a
counitary coassociative coalgebra. We call {\em combinatorial Hopf algebra}
any graded bialgebra $\Hca = \bigoplus_{n \geq 0} \Hca_n$ such that for
any $n \geq 1$, the $n$-th homogeneous component $\Hca_n$ of $\Hca$ has
finite dimension and the dimension of $\Hca_0$ is $1$. The {\em degree}
of any element $x \in \Hca_n$ is $n$ and is denoted by $\deg(x)$.
Combinatorial Hopf algebras are Hopf algebras because the antipode can
be defined recursively degree by degree. Let us now review some classical
combinatorial Hopf algebras which play an important role in this work.
\medskip

\subsubsection{Faà di Bruno algebra and its deformations}
Let $\FdB$ be the free commutative algebra generated by elements $h_n$,
$n \geq 1$ with $\deg(h_n) = n$. The bases of $\FdB$ are thus
indexed by integer partitions, and the unit is denoted by $h_0$. This is the
{\em algebra of symmetric functions} \cite{Mcd95}. There are several ways
to endow $\FdB$ with a coproduct to turn it into a Hopf algebra. In \cite{Foi08},
Foissy obtains, as a byproduct of his investigation of combinatorial
Dyson-Schwinger equations in the Connes-Kreimer algebra, a one-parameter family $\Delta_\gamma$,
$\gamma \in \R$, of coproducts on $\FdB$, defined by using
alphabet transformations (see \cite{Mcd95}), by
\begin{equation} \label{equ:coproduit_delta_gamma}
    \Delta_\gamma(h_n) :=
    \sum_{k=0}^n h_k \otimes h_{n-k}((k\gamma +1)X),
\end{equation}
where, for any $\alpha\in\R$ and $n\in\N$, $h_n(\alpha X)$
is the coefficient of $t^n$ in $\left(\sum_{k\geq 0} h_kt^k\right)^\alpha$.
In particular,
\begin{equation}
    \Delta_0(h_n) =
    \sum_{k=0}^n h_k \otimes h_{n-k}.
\end{equation}
The algebra $\FdB$ with the coproduct $\Delta_0$ is the classical
{\em Hopf algebra of symmetric functions} $\Sym$ \cite{Mcd95}. Moreover,
for all $\gamma \neq 0$, all $\FdB_\gamma$ are isomorphic to $\FdB_1$,
which is known as the {\em Faà di Bruno algebra} \cite{JR79}. The coproduct
$\Delta_0$ comes from the interpretation of $\FdB$ as the algebra of
polynomial functions on the multiplicative group
$(G(t) := \{1+\sum_{k \geq 1}a_k t^k\},\cdot)$ of formal power series
of constant term $1$, and $\Delta_1$ comes from its interpretation as
the algebra of polynomial functions on the group $(tG(t),\circ)$ of
formal diffeomorphisms of the real line.
\medskip

\subsubsection{Noncommutative analogs}
Formal power series in one variable with coefficients in a noncommutative
algebra can be composed (by substitution of the variable). This operation
is not associative, so that they do not form a group. For example, when
$a$ and $b$ belong to a noncommutative algebra, one has
\begin{equation}
    (t^2\circ at)\circ bt = a^2t^2 \circ bt = a^2b^2t^2
\end{equation}
but
\begin{equation}
    t^2\circ (at\circ bt) = t^2\circ abt = ababt^2.
\end{equation}
However, the analogue of the Fa\`a di Bruno algebra still exists in this
context and is known as the \emph{noncommutative Fa\`a di Bruno algebra}.
It is investigated in \cite{BFK06} in view of applications in quantum
field theory. In \cite{Foi08}, Foissy also obtains an analogue of the
family $\FdB_\gamma$ in this context. Indeed, considering noncommutative
generators $\Sbf_n$ (with $\deg(\Sbf_n) = n$) instead of the $h_n$, for
all $n \geq 1$, leads to a free noncommutative algebra $\FdBNC$ whose
bases are indexed by integer compositions. This is the \emph{algebra of
noncommutative symmetric functions} \cite{GKLLRT95}. The addition of the
coproduct $\Delta_\gamma$ defined by
\begin{equation}\label{equ:coproduit_delta_gamma_non_commutatif}
\Delta_\gamma(\Sbf_n) :=
    \sum_{k=0}^n \Sbf_k \otimes \Sbf_{n-k}((k\gamma +1)A),
\end{equation}
where, for any $\alpha\in\R$ and $n\in\N$, $\Sbf_n(\alpha A)$
is the coefficient of $t^n$ in
$\left(\sum_{k\geq 0} \Sbf_kt^k\right)^\alpha$, forms a noncommutative
Hopf algebra $\FdBNC_\gamma$. In particular,
\begin{equation}
     \Delta_0(\Sbf_n)=\sum_{k=0}^n \Sbf_k\otimes\Sbf_{n-k},
\end{equation}
where $\Sbf_0$ is the unit. In this way, $\FdBNC$ with the coproduct
$\Delta_0$ is the {\em Hopf algebra of noncommutative symmetric functions}
$\SymNC$ \cite{GKLLRT95,KLT97}, and for all $\gamma \neq 0$, all the
$\FdBNC_\gamma$ are isomorphic to $\FdBNC_1$, which is the
{\em noncommutative Faà di Bruno algebra}.
\medskip

\subsection{The natural Hopf algebra of an operad}
We shall consider in this work only nonsymmetric operads in the category
of sets. For this reason, we shall call these simply {\em operads}.
\medskip

\subsubsection{Operads}
In our context, an operad is a triple $(\Oca, \circ_i, \Unite)$ where
\begin{equation}
    \Oca := \bigsqcup_{n \geq 1} \Oca(n)
\end{equation}
is a graded set,
\begin{equation}
    \circ_i : \Oca(n) \times \Oca(m) \to \Oca(n + m - 1),
    \qquad n, m \geq 1, i \in [n],
\end{equation}
is a composition map, called {\em partial composition}, and
$\Unite \in \Oca(1)$ is a unit. These data have to satisfy the relations
\begin{equation} \label{eq:AssocSerie}
    (x \circ_i y) \circ_{i + j - 1} z = x \circ_i (y \circ_j z),
    \qquad x \in \Oca(n), y \in \Oca(m),
    z \in \Oca(k), i \in [n], j \in [m],
\end{equation}
\begin{equation} \label{eq:AssocParallele}
    (x \circ_i y) \circ_{j + m - 1} z = (x \circ_j z) \circ_i y,
    \qquad x \in \Oca(n), y \in \Oca(m),
    z \in \Oca(k), 1 \leq i < j \leq n,
\end{equation}
\begin{equation} \label{eq:Unite}
    \Unite \circ_1 x = x = x \circ_i \Unite,
    \qquad x \in \Oca(n), i \in [n].
\end{equation}
\medskip

Besides, we shall denote by
\begin{equation}
    \circ : \Oca(n) \times
    \Oca(m_1) \times \dots \times \Oca(m_n) \to
    \Oca(m_1 + \dots + m_n),
    \qquad n, m_1, \dots, m_n \geq 1,
\end{equation}
the {\em total composition map} of $\Oca$. It is defined for any
$x \in \Oca(n)$ and $y_1, \dots, y_n \in \Oca$ by
\begin{equation}
    x \circ [y_1, \dots, y_n] :=
    (\dots ((x \circ_n y_n) \circ_{n - 1} y_{n - 1}) \dots) \circ_1 y_1.
\end{equation}
\medskip

If $x$ is an element of $\Oca(n)$, we say that the {\em arity} $|x|$ of
$x$ is $n$. An {\em operad morphism} is a map $\phi : \Oca_1 \to \Oca_2$
between two operads $\Oca_1$ and $\Oca_2$ such that $\phi$ commutes
with the partial composition maps and preserves the arities. A subset $S$
of an operad $\Oca$ is a {\em suboperad} of $\Oca$ if $\Unite \in S$ and
the composition of $\Oca$ is stable in $S$. The
{\em suboperad of $\Oca$ generated by} a subset $G$ of $\Oca$ is the
smallest suboperad of $\Oca$ containing $G$.
\medskip

\subsubsection{The natural bialgebra of an operad}
Let us recall a very simple construction associating a Hopf algebra with
an operad. A slightly different version of this construction is considered
in \cite{Vdl04,CL07,ML13}. Let $\Oca$ be an operad and denote by
$\Oca^+$ the set $\Oca \setminus \{\Unite\}$. The {\em natural bialgebra}
of $\Oca$ is the free commutative algebra  $\OvH(\Oca)$ spanned by the
$\Tit_x$, where the $x$ are elements of $\Oca^+$. The bases of
$\OvH(\Oca)$ are thus indexed by finite multisets of elements of
$\Oca^+$. The unit of $\OvH(\Oca)$ is denoted by $\Tit_{\Unite}$ and
the coproduct of $\OvH(\Oca)$ is the unique algebra morphism satisfying,
for any element $x$ of $\Oca^+$,
\begin{equation}
    \Delta(\Tit_x) =
    \sum_{\substack{y, z_1, \dots, z_\ell \in \Oca \\
    y \circ [z_1, \dots, z_\ell] = x}}
    \Tit_y \otimes \Tit_{z_1} \dots \Tit_{z_\ell}.
\end{equation}
\medskip

The bialgebra $\OvH(\Oca)$ can be graded by $\deg(\Tit_x) := |x| - 1$.
Note that with this grading, when $\Oca(1) = \{\Unite\}$ and the $\Oca(n)$
are finite for all $n \geq 1$, $\OvH(\Oca)$ becomes a combinatorial
Hopf algebra.
\medskip

\subsection{PROs and free PROs}
We recall here the definitions of PROs and free PROs in terms of prographs
and introduce the notions of reduced and indecomposable elements, which
will be used in the following sections.
\medskip

\subsubsection{PROs}
A {\em PRO} is a quadruple $(\Pca, *, \circ, \Unite_p)$ where $\Pca$ is
a bigraded set of the form
\begin{equation}
    \Pca := \bigsqcup_{p \geq 0} \bigsqcup_{q \geq 0} \Pca(p, q),
\end{equation}
such that for any $p, q \geq 0$, $\Pca(p, q)$ contains elements $x$ with
$\In(x) := p$ as {\em input arity} and $\Out(x) := q$ as {\em output arity},
$*$ is a map of the form
\begin{equation}
    * : \Pca(p, q) \times \Pca(p', q') \to \Pca(p + p', q + q'),
    \qquad p, p', q, q' \geq 0,
\end{equation}
called {\em horizontal composition}, $\circ$ is a map of the form
\begin{equation}
    \circ : \Pca(q, r) \times \Pca(p, q) \to \Pca(p, r),
    \qquad p, q, r \geq 0,
\end{equation}
called {\em vertical composition}, and for any $p \geq 0$, $\Unite_p$ is
an element of $\Pca(p, p)$ called {\em unit of arity $p$}.
\medskip

These data have to satisfy for all $x, y, z \in \Pca$ the six relations
\begin{equation} \label{equ:assoc_compo_h}
    (x * y) * z = x * (y * z),
    \qquad x, y, z \in \Pca,
\end{equation}
\begin{equation} \label{equ:assoc_compo_v}
    (x \circ y) \circ z = x \circ (y \circ z),
    \qquad x, y, z \in \Pca, \In(x) = \Out(y), \In(y) = \Out(z),
\end{equation}
\begin{equation} \label{equ:compo_h_v}
    (x \circ y) * (x' \circ y') = (x * x') \circ (y * y'),
    \qquad x, x', y, y' \in \Pca, \In(x) = \Out(y), \In(x') = \Out(y'),
\end{equation}
\begin{equation} \label{equ:relation_unite_h}
    \Unite_p * \Unite_q = \Unite_{p + q},
    \qquad p, q \geq 0,
\end{equation}
\begin{equation} \label{equ:relation_unite_zero}
    x * \Unite_0 = x = \Unite_0 * x,
    \qquad x \in \Pca,
\end{equation}
\begin{equation} \label{equ:relation_unite_v}
    x \circ \Unite_p = x = \Unite_q \circ x,
    \qquad x \in \Pca, p, q \geq 0, \In(x) = p, \Out(x) = q.
\end{equation}
\medskip

A {\em PRO morphism} is a map $\phi : \Pca_1 \to \Pca_2$ between two
PROs $\Pca_1$ and $\Pca_2$ such that $\phi$ commutes with the
horizontal and vertical compositions and preserves the input and
output arities. A subset $S$ of a PRO $\Pca$ is a {\em sub-PRO} of $\Pca$
if $\Unite_p \in S$ for any $p \geq 0$ and the horizontal and vertical
compositions of $\Pca$ are stable in $S$. The {\em sub-PRO of $\Pca$ generated by}
a subset $G$ of $\Pca$ is the smallest sub-PRO of $\Pca$ containing $G$.
An equivalence relation $\equiv$ on $\Pca$ is a {\em congruence of PROs}
if all the elements of a same $\equiv$-equivalence class have the same
input arity and the same output arity, and $\equiv$ is compatible with the
horizontal and the vertical composition. Any congruence $\equiv$ of
$\Pca$ gives rise to a {\em PRO quotient} of $\Pca$ denoted by $\Pca/_\equiv$
and defined in the expected way.
\medskip

\subsubsection{Free PROs}
Let us now set our terminology about free PROs and its elements in terms
of {\em prographs}. From now,
\begin{equation} \label{equ:ensemble_bigradue}
    G := \bigsqcup_{p \geq 1} \bigsqcup_{q \geq 1} G(p, q)
\end{equation}
is a bigraded set. An {\em elementary prograph} $e$ over $G$ is a formal
operator labeled by an element $\La$ of $G(p, q)$. The {\em input}
(resp. {\em output}) {\em arity} of $e$ is $p$ (resp. $q$). We represent
$e$ as a rectangle labeled by $g$ with $p$ incoming edges (below the
rectangle) and $q$ outgoing edges (above the rectangle). For instance,
if $\La \in G(2, 3)$, the elementary prograph labeled by $\La$ is
depicted by
\begin{equation}
    \begin{split}
    \scalebox{.25}{\begin{tikzpicture}
        \node[Operateur](0)at(0,0){\begin{math}\La\end{math}};
        \node[Feuille](1)at(-1,-2){};
        \node[Feuille](2)at(1,-2){};
        \node[Feuille](3)at(-1,2){};
        \node[Feuille](33)at(0,2){};
        \node[Feuille](4)at(1,2){};
        \draw[Arete](0)--(1);
        \draw[Arete](0)--(2);
        \draw[Arete](0)--(3);
        \draw[Arete](0)--(33);
        \draw[Arete](0)--(4);
        \node[below of=1,font=\Huge]{\begin{math}1\end{math}};
        \node[below of=2,font=\Huge]{\begin{math}2\end{math}};
        \node[above of=3,font=\Huge]{\begin{math}1\end{math}};
        \node[above of=33,font=\Huge]{\begin{math}2\end{math}};
        \node[above of=4,font=\Huge]{\begin{math}3\end{math}};
    \end{tikzpicture}}
    \end{split}\,.
\end{equation}
\medskip

A {\em prograph} over $G$ is a formal operator defined recursively
as follows. A prograph over $G$ can be either an elementary prograph
over $G$, or a special element, the {\em wire} depicted by
\begin{equation}
    \begin{split}
    \scalebox{.25}{\begin{tikzpicture}
        \node[Feuille](S1)at(0,0){};
        \node[Feuille](E1)at(0,-2){};
        \draw[Arete](E1)--(S1);
    \end{tikzpicture}}
    \end{split}\,,
\end{equation}
or a combination of two prographs over $G$ through the following two
operations. The first one, denoted by $*$, consists in placing two
prographs side by side. For instance, if $x$ is a prograph with $p$
inputs (resp. $q$ outputs) and $y$ is a prograph with $p'$ inputs
(resp. $q'$ outputs),
\begin{equation}
    \begin{split}
    \scalebox{.25}{\begin{tikzpicture}
        \node[Operateur,Marque3](0)at(0,0){\begin{math}x\end{math}};
        \node[Feuille](1)at(-1,-2){};
        \node[Feuille](2)at(1,-2){};
        \node[Feuille](3)at(-1,2){};
        \node[Feuille](4)at(1,2){};
        \draw[Arete](0)--(1);
        \draw[Arete](0)--(2);
        \draw[Arete](0)--(3);
        \draw[Arete](0)--(4);
        \node[below of=1,font=\Huge]{\begin{math}1\end{math}};
        \node[below of=2,font=\Huge]{\begin{math}p\end{math}};
        \node[above of=3,font=\Huge]{\begin{math}1\end{math}};
        \node[above of=4,font=\Huge]{\begin{math}q\end{math}};
        \node[below of=0,node distance=2cm,font=\Huge]
                {\begin{math}\dots\end{math}};
        \node[above of=0,node distance=2cm,font=\Huge]
                {\begin{math}\dots\end{math}};
    \end{tikzpicture}}
    \end{split}
    \quad * \quad
    \begin{split}
    \scalebox{.25}{\begin{tikzpicture}
        \node[Operateur,Marque4](0)at(0,0){\begin{math}y\end{math}};
        \node[Feuille](1)at(-1,-2){};
        \node[Feuille](2)at(1,-2){};
        \node[Feuille](3)at(-1,2){};
        \node[Feuille](4)at(1,2){};
        \draw[Arete](0)--(1);
        \draw[Arete](0)--(2);
        \draw[Arete](0)--(3);
        \draw[Arete](0)--(4);
        \node[below of=1,font=\Huge]{\begin{math}1\end{math}};
        \node[below of=2,font=\Huge]{\begin{math}p'\end{math}};
        \node[above of=3,font=\Huge]{\begin{math}1\end{math}};
        \node[above of=4,font=\Huge]{\begin{math}q'\end{math}};
        \node[below of=0,node distance=2cm,font=\Huge]
                {\begin{math}\dots\end{math}};
        \node[above of=0,node distance=2cm,font=\Huge]
                {\begin{math}\dots\end{math}};
    \end{tikzpicture}}
    \end{split}
    \quad = \quad
    \begin{split}
    \scalebox{.25}{\begin{tikzpicture}
        \node[Operateur,Marque3](0)at(0,0){\begin{math}x\end{math}};
        \node[Operateur,Marque4](00)at(1.5,0){\begin{math}y\end{math}};
        \node[Feuille](1)at(-1,-2){};
        \node[Feuille](2)at(.5,-2){};
        \node[Feuille](3)at(-1,2){};
        \node[Feuille](4)at(.5,2){};
        \draw[Arete](0)--(1);
        \draw[Arete](0)--(2);
        \draw[Arete](0)--(3);
        \draw[Arete](0)--(4);
        \node[below of=1,font=\Huge]{\begin{math}1\end{math}};
        \node[below of=2,font=\Huge]{\begin{math}p\end{math}};
        \node[above of=3,font=\Huge]{\begin{math}1\end{math}};
        \node[above of=4,font=\Huge]{\begin{math}q\end{math}};
        \node[below of=0,node distance=2cm,font=\Huge]
                {\begin{math}\dots\end{math}};
        \node[above of=0,node distance=2cm,font=\Huge]
                {\begin{math}\dots\end{math}};
        \node[Feuille](10)at(1,-2){};
        \node[Feuille](20)at(2.5,-2){};
        \node[Feuille](30)at(1,2){};
        \node[Feuille](40)at(2.5,2){};
        \draw[Arete](00)--(10);
        \draw[Arete](00)--(20);
        \draw[Arete](00)--(30);
        \draw[Arete](00)--(40);
        \node[below of=10,font=\Huge]{\begin{math}1\end{math}};
        \node[below of=20,font=\Huge]{\begin{math}p'\end{math}};
        \node[above of=30,font=\Huge]{\begin{math}1\end{math}};
        \node[above of=40,font=\Huge]{\begin{math}q'\end{math}};
        \node[below of=00,node distance=2cm,font=\Huge]
                {\begin{math}\dots\end{math}};
        \node[above of=00,node distance=2cm,font=\Huge]
                {\begin{math}\dots\end{math}};
    \end{tikzpicture}}
    \end{split}\,.
\end{equation}
The second one, denoted by $\circ$, consists in connecting the
inputs of a first prograph over the outputs of a second.
For instance, if $x$ is a prograph with $p$ inputs (resp. $q$ outputs)
and $y$ is a prograph with $r$ inputs (resp. $p$ outputs),
\begin{equation}
    \begin{split}
    \scalebox{.25}{\begin{tikzpicture}[yscale=1]
        \node[Operateur,Marque3](0)at(0,0){\begin{math}\La\end{math}};
        \node[Feuille](1)at(-1,-2){};
        \node[Feuille](2)at(1,-2){};
        \node[Feuille](3)at(-1,2){};
        \node[Feuille](4)at(1,2){};
        \draw[Arete](0)--(1);
        \draw[Arete](0)--(2);
        \draw[Arete](0)--(3);
        \draw[Arete](0)--(4);
        \node[below of=1,font=\Huge]{\begin{math}1\end{math}};
        \node[below of=2,font=\Huge]{\begin{math}p\end{math}};
        \node[above of=3,font=\Huge]{\begin{math}1\end{math}};
        \node[above of=4,font=\Huge]{\begin{math}q\end{math}};
        \node[below of=0,node distance=2cm,font=\Huge]
                {\begin{math}\dots\end{math}};
        \node[above of=0,node distance=2cm,font=\Huge]
                {\begin{math}\dots\end{math}};
    \end{tikzpicture}}
    \end{split}
    \quad \circ \quad
    \begin{split}
    \scalebox{.25}{\begin{tikzpicture}[yscale=1]
        \node[Operateur,Marque4](0)at(0,0){\begin{math}\Lb\end{math}};
        \node[Feuille](1)at(-1,-2){};
        \node[Feuille](2)at(1,-2){};
        \node[Feuille](3)at(-1,2){};
        \node[Feuille](4)at(1,2){};
        \draw[Arete](0)--(1);
        \draw[Arete](0)--(2);
        \draw[Arete](0)--(3);
        \draw[Arete](0)--(4);
        \node[below of=1,font=\Huge]{\begin{math}1\end{math}};
        \node[below of=2,font=\Huge]{\begin{math}r\end{math}};
        \node[above of=3,font=\Huge]{\begin{math}1\end{math}};
        \node[above of=4,font=\Huge]{\begin{math}p\end{math}};
        \node[below of=0,node distance=2cm,font=\Huge]
                {\begin{math}\dots\end{math}};
        \node[above of=0,node distance=2cm,font=\Huge]
                {\begin{math}\dots\end{math}};
    \end{tikzpicture}}
    \end{split}
    \quad = \quad
    \begin{split}
    \scalebox{.25}{\begin{tikzpicture}
        \node[Operateur,Marque3](0)at(0,0){\begin{math}\La\end{math}};
        \node[Operateur,Marque4](00)at(0,-3){\begin{math}\Lb\end{math}};
        \node[Feuille](10)at(-1,-5){};
        \node[Feuille](20)at(1,-5){};
        \node[Feuille](3)at(-1,2){};
        \node[Feuille](4)at(1,2){};
        \draw[Arete](00)--(10);
        \draw[Arete](00)--(20);
        \draw[Arete](0)--(3);
        \draw[Arete](0)--(4);
        \node[below of=10,font=\Huge]{\begin{math}1\end{math}};
        \node[below of=20,font=\Huge]{\begin{math}r\end{math}};
        \node[above of=3,font=\Huge]{\begin{math}1\end{math}};
        \node[above of=4,font=\Huge]{\begin{math}q\end{math}};
        \node[below of=00,node distance=2cm,font=\Huge]
                {\begin{math}\dots\end{math}};
        \node[above of=0,node distance=2cm,font=\Huge]
                {\begin{math}\dots\end{math}};
        \draw[Arete](0)edge[bend right] node[]{}(00);
        \draw[Arete](0)edge[bend left] node[]{}(00);
        \node[below of=0, node distance=1.5cm,font=\Huge]
                {\begin{math}\dots\end{math}};
    \end{tikzpicture}}
    \end{split}\,.
\end{equation}
By definition, connecting the input (resp. output) of a wire to the
output (resp. input) of a prograph $x$ does not change $x$.
\medskip

The {\em input} (resp. {\em output}) {\em arity} of a prograph $x$
is its number of inputs $\In(x)$ (resp. outputs $\Out(x)$). The inputs
(resp. outputs) of a prograph are numbered from left to right from $1$
to $\In(x)$ (resp. $\Out(x)$), possibly implicitly in the drawings.
The {\em degree} $\deg(x)$ of a prograph $x$ is the number
of elementary prographs required to build it. For instance,
\begin{equation}
    \begin{split}\scalebox{.25}{\begin{tikzpicture}[yscale=.65]
        \node[Feuille](S1)at(0,0){};
        \node[Feuille](S2)at(2,0){};
        \node[Feuille](S3)at(4.5,0){};
        \node[Feuille](S4)at(8,0){};
        \node[Feuille](S5)at(10,0){};
        \node[Operateur](N1)at(1,-3.5){\begin{math}\La\end{math}};
        \node[Operateur,Marque1](N2)at(4.5,-2){\begin{math}\Lb\end{math}};
        \node[Operateur](N3)at(7,-5){\begin{math}\La\end{math}};
        \node[Feuille](E1)at(0,-7){};
        \node[Feuille](E2)at(2,-7){};
        \node[Feuille](E3)at(3,-7){};
        \node[Feuille](E4)at(4.5,-7){};
        \node[Feuille](E5)at(6,-7){};
        \node[Feuille](E6)at(8,-7){};
        \node[Feuille](E7)at(10,-7){};
        \draw[Arete](N1)--(S1);
        \draw[Arete](N1)--(S2);
        \draw[Arete](N1)--(E1);
        \draw[Arete](N1)--(E2);
        \draw[Arete](N2)--(S3);
        \draw[Arete](N2)--(E3);
        \draw[Arete](N2)--(E4);
        \draw[Arete](N2)--(N3);
        \draw[Arete](N3)--(S4);
        \draw[Arete](N3)--(E5);
        \draw[Arete](N3)--(E6);
        \draw[Arete](S5)--(E7);
    \end{tikzpicture}}\end{split}
\end{equation}
is a prograph over $G := G(2, 2) \sqcup G(3, 1)$ where $G(2, 2) := \{\La\}$
and $G(3, 1) := \{\Lb\}$. Its input arity is $7$, its output arity is
$5$, and its degree is $3$.
\medskip

The {\em free PRO generated by $G$} is the PRO $\Free(G)$ whose elements
are all the prographs on~$G$, the horizontal composition being the operation
$*$ on prographs, and the vertical composition being the operation $\circ$.
Its unit $\Unite_1$ is the wire, and for any $p \geq 0$, $\Unite_p$ is
the horizontal composition of $p$ occurrences of the wire. Notice
that by~\eqref{equ:ensemble_bigradue}, there is no elementary prograph in
$\Free(G)$ with a null input or output arity. Therefore, $\Unite_0$ is
the only element of $\Free(G)$ with a null input (resp. output) arity.
In this work, we consider only free PROs satisfying this property.
\medskip

\begin{Lemme} \label{lem:regle_du_carre}
    Let $\Pca$ be a free PRO and $x, y, z, t \in \Pca$ such that
    $x * y = z \circ t$. Then, there exist four unique elements
    $x_1$, $x_2$, $y_1$, $y_2$ of $\Pca$ such that $x = x_1 \circ x_2$,
    $y = y_1 \circ y_2$, $z = x_1 * y_1$, and $t = x_2 * y_2$.
\end{Lemme}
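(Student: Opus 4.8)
The plan is to use the concrete description of $\Pca = \Free(G)$ by prographs and to read the hypothesis $x * y = z \circ t$ as two decompositions of one and the same prograph $D$. Writing $D = x * y$ draws a vertical frontier in $D$ separating the elementary prographs of $x$ (to the left) from those of $y$ (to the right), no edge crossing it. Dually, writing $D = z \circ t$ draws a horizontal frontier separating the elementary prographs of $z$ (above) from those of $t$ (below), the only crossing edges being the $q := \In(z) = \Out(t)$ wires joining the outputs of $t$ to the inputs of $z$.

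For existence, I would superimpose the two frontiers, obtaining four zones, and let $x_1$, $x_2$, $y_1$, $y_2$ be the subprographs carried by the top-left, bottom-left, top-right and bottom-right zones. Because no edge crosses the vertical frontier, each of the $q$ connecting wires lies wholly to its left or wholly to its right; this induces a splitting $q = q_1 + q_2$ where the left wires join $x_2$ to $x_1$ and the right ones join $y_2$ to $y_1$. Reading off the zones then gives $\In(x_1) = \Out(x_2) = q_1$, $\In(y_1) = \Out(y_2) = q_2$, together with the four identities $x = x_1 \circ x_2$, $y = y_1 \circ y_2$, $z = x_1 * y_1$ and $t = x_2 * y_2$; as a check, relation~\eqref{equ:compo_h_v} turns these back into $x * y = (x_1 * y_1) \circ (x_2 * y_2) = z \circ t$.

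For uniqueness, I would show that the data already forces the two frontiers and the cutting level. Indeed $D = x * y$ fixes the vertical frontier and $D = z \circ t$ fixes the horizontal one, so any admissible quadruple must agree with them once we know $\In(x_1)$. But $\In(x_1)$ is forced: the connecting wires of $D = z \circ t$ lying to the left of the vertical frontier are exactly those joining the bottom-left to the top-left zone, so their number equals $\In(x_1)$, an invariant of $D$ and of the two decompositions alone. Knowing $\In(x_1)$, the factorization $z = x_1 * y_1$ determines $x_1$ and $y_1$, since in a free PRO a horizontal factorization is pinned down by the input arity of its left factor; then $x = x_1 \circ x_2$ determines $x_2$ by left cancellation of the vertical composition, and symmetrically $y = y_1 \circ y_2$ determines $y_2$.

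The hard part will be the rigorous justification, in the existence step, that superimposing the two frontiers yields genuine prographs: one must check that each of the four zones is a closed subdiagram with the announced input and output arities, and that the two frontiers are truly independent, no edge running diagonally from one zone to a non-adjacent one. All of this rests on the single structural feature of free PROs that a horizontal composition creates a frontier crossed by no edge, which is exactly what makes the $q$ connecting wires distribute cleanly on either side and forbids any such diagonal edge.
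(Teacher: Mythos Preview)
Your proposal is correct and follows essentially the same geometric approach as the paper: both proofs draw a vertical frontier (from $x * y$) and a horizontal frontier (from $z \circ t$) in the planar depiction of the prograph, and read off $x_1, x_2, y_1, y_2$ from the four resulting zones. The only notable difference is in the uniqueness argument: you determine $\In(x_1)$ by counting how many of the $q$ connecting wires of $z \circ t$ lie left of the vertical frontier, whereas the paper more directly observes that $\Out(x_1) = \Out(x)$ and $\Out(y_1) = \Out(y)$ follow immediately from $x = x_1 \circ x_2$ and $y = y_1 \circ y_2$, and then freeness of $\Pca$ forces the splitting $z = x_1 * y_1$ (and similarly $t = x_2 * y_2$) once these output arities are known.
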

\begin{proof}
    Let us prove the uniqueness. Assume that there are
    $x_1, x_2, y_1, y_2 \in \Pca$ and $x'_1, x'_2, y'_1, y'_2 \in \Pca$
    such that $x = x_1 \circ x_2 = x'_1 \circ x'_2$,
    $y = y_1 \circ y_2  = y'_1 \circ y'_2$, $z = x_1 * y_1 = x'_1 * y'_1$,
    and $t = x_2 * y_2 = x'_2 * y'_2$. Then, we have in particular
    $\Out(x_1) = \Out(x'_1)$ and $\Out(y_1) = \Out(y'_1)$. This, together
    with the relation $x_1 * y_1 = x'_1 * y'_1$ and the fact that $\Pca$
    is free, implies $x_1 = x'_1$ and $y_1 = y'_1$. In the same way,
    we obtain $x_2 = x'_2$ and $y_2 = y'_2$.
    \smallskip

    Let us now give a geometrical proof for the existence based upon the
    fact that prographs are planar objects. Since
    $u := x * y = z \circ t$, $u$ is a prograph obtained by an horizontal
    composition of two prographs. Then, $u = x * y$ depicted in a plane
    $\mathfrak{P}$ can be split into two regions $\mathfrak{X}$ and
    $\mathfrak{Y}$ such that $\mathfrak{X}$ contains the prograph $x$,
    $\mathfrak{Y}$ contains the prograph $y$, and $y$ is at the right of
    $x$. On the other hand, $u = z \circ t$ depicted in the same plane
    $\mathfrak{P}$ can be split into two regions $\mathfrak{Z}$ and
    $\mathfrak{T}$ such that $\mathfrak{Z}$ contains the prograph $z$,
    $\mathfrak{T}$ contains the prograph $t$, and $t$ is below $z$, the
    inputs of $z$ being connected to the outputs of $t$. We then obtain
    a division of $\mathfrak{P}$ into four regions
    $\mathfrak{X}\cap \mathfrak{Z}$, $\mathfrak{X}\cap \mathfrak{T}$,
    $\mathfrak{Y}\cap \mathfrak{Z}$, and $\mathfrak{Y}\cap \mathfrak{T}$,
    respectively containing prographs $x_1$, $x_2$, $y_1$, and $y_2$,
    and such that $x_1 * y_1 = z$, $x_2 * y_2 = t$, $x_1 \circ x_2 = x$,
    and $y_1 \circ y_2 = y$.
\end{proof}
\medskip

\subsubsection{Reduced and indecomposable elements}
Let $\Pca$ be a free PRO. Since $\Pca$ is free, any element $x$ of $\Pca$
can be uniquely written as $x = x_1 * \dots * x_\ell$
where the $x_i$ are elements of $\Pca$ different from $\Unite_0$, and
$\ell \geq 0$ is maximal. We call the word $\Dec(x) := (x_1, \dots, x_\ell)$
the {\em maximal decomposition} of $x$ and the $x_i$ the {\em factors}
of $x$. Notice that the maximal decomposition of $\Unite_0$ is the empty
word. We have, for instance,
\begin{equation}
    \begin{split} \Dec \end{split} \left(\;
    \begin{split}\scalebox{.25}{\begin{tikzpicture}[yscale=.65]
        \node[Feuille](S1)at(0,0){};
        \node[Feuille](S2)at(2,0){};
        \node[Feuille](S3)at(4.5,0){};
        \node[Feuille](S4)at(8,0){};
        \node[Feuille](S5)at(10,0){};
        \node[Operateur](N1)at(1,-3.5){\begin{math}\La\end{math}};
        \node[Operateur,Marque1](N2)at(4.5,-2){\begin{math}\Lb\end{math}};
        \node[Operateur](N3)at(7,-5){\begin{math}\La\end{math}};
        \node[Feuille](E1)at(0,-7){};
        \node[Feuille](E2)at(2,-7){};
        \node[Feuille](E3)at(3,-7){};
        \node[Feuille](E4)at(4.5,-7){};
        \node[Feuille](E5)at(6,-7){};
        \node[Feuille](E6)at(8,-7){};
        \node[Feuille](E7)at(10,-7){};
        \draw[Arete](N1)--(S1);
        \draw[Arete](N1)--(S2);
        \draw[Arete](N1)--(E1);
        \draw[Arete](N1)--(E2);
        \draw[Arete](N2)--(S3);
        \draw[Arete](N2)--(E3);
        \draw[Arete](N2)--(E4);
        \draw[Arete](N2)--(N3);
        \draw[Arete](N3)--(S4);
        \draw[Arete](N3)--(E5);
        \draw[Arete](N3)--(E6);
        \draw[Arete](S5)--(E7);
    \end{tikzpicture}}\end{split}\;\right)
    \begin{split} \quad = \quad \end{split}
    \left(\;
    \begin{split}\scalebox{.25}{\begin{tikzpicture}[yscale=.65]
        \node[Feuille](S1)at(0,0){};
        \node[Feuille](S2)at(2,0){};
        \node[Operateur](N1)at(1,-3.5){\begin{math}\La\end{math}};
        \node[Feuille](E1)at(0,-7){};
        \node[Feuille](E2)at(2,-7){};
        \draw[Arete](N1)--(S1);
        \draw[Arete](N1)--(S2);
        \draw[Arete](N1)--(E1);
        \draw[Arete](N1)--(E2);
    \end{tikzpicture}}\end{split}
    \begin{split}, \quad \end{split}
    \begin{split}\scalebox{.25}{\begin{tikzpicture}[yscale=.65]
        \node[Feuille](S3)at(4.5,0){};
        \node[Feuille](S4)at(8,0){};
        \node[Operateur,Marque1](N2)at(4.5,-2){\begin{math}\Lb\end{math}};
        \node[Operateur](N3)at(7,-5){\begin{math}\La\end{math}};
        \node[Feuille](E3)at(3,-7){};
        \node[Feuille](E4)at(4.5,-7){};
        \node[Feuille](E5)at(6,-7){};
        \node[Feuille](E6)at(8,-7){};
        \draw[Arete](N2)--(S3);
        \draw[Arete](N2)--(E3);
        \draw[Arete](N2)--(E4);
        \draw[Arete](N2)--(N3);
        \draw[Arete](N3)--(S4);
        \draw[Arete](N3)--(E5);
        \draw[Arete](N3)--(E6);
    \end{tikzpicture}}\end{split}
    \begin{split}, \quad \end{split}
    \begin{split}\scalebox{.25}{\begin{tikzpicture}[yscale=.65]
        \node[Feuille](S5)at(10,0){};
        \node[Feuille](E7)at(10,-7){};
        \draw[Arete](S5)--(E7);
    \end{tikzpicture}}\end{split}
    \;\right).
\end{equation}
\medskip

An element $x$ of $\Pca$ is {\em reduced} if all its factors are
different from $\Unite_1$. For any element $x$ of $\Pca$, we denote by
$\Reduit(x)$ the reduced element of $\Pca$ admitting as maximal decomposition
the longest subword of $\Dec(x)$ consisting in factors different from
$\Unite_1$. We have, for instance,
\begin{equation}
    \begin{split} \Reduit \end{split} \left(\;
    \begin{split}\scalebox{.25}{\begin{tikzpicture}[yscale=.65]
        \node[Feuille](S0)at(-1,0){};
        \node[Feuille](S1)at(0,0){};
        \node[Feuille](S2)at(2,0){};
        \node[Feuille](S22)at(3,0){};
        \node[Feuille](S222)at(4,0){};
        \node[Feuille](S3)at(6.5,0){};
        \node[Feuille](S4)at(10,0){};
        \node[Feuille](S5)at(12,0){};
        \node[Operateur](N1)at(1,-3.5){\begin{math}\La\end{math}};
        \node[Operateur,Marque1](N2)at(6.5,-2){\begin{math}\Lb\end{math}};
        \node[Operateur](N3)at(9,-5){\begin{math}\La\end{math}};
        \node[Feuille](E0)at(-1,-7){};
        \node[Feuille](E1)at(0,-7){};
        \node[Feuille](E2)at(2,-7){};
        \node[Feuille](E22)at(3,-7){};
        \node[Feuille](E222)at(4,-7){};
        \node[Feuille](E3)at(5,-7){};
        \node[Feuille](E4)at(6.5,-7){};
        \node[Feuille](E5)at(8,-7){};
        \node[Feuille](E6)at(10,-7){};
        \node[Feuille](E7)at(12,-7){};
        \draw[Arete](S0)--(E0);
        \draw[Arete](N1)--(S1);
        \draw[Arete](N1)--(S2);
        \draw[Arete](N1)--(E1);
        \draw[Arete](N1)--(E2);
        \draw[Arete](N2)--(S3);
        \draw[Arete](N2)--(E3);
        \draw[Arete](N2)--(E4);
        \draw[Arete](N2)--(N3);
        \draw[Arete](N3)--(S4);
        \draw[Arete](N3)--(E5);
        \draw[Arete](N3)--(E6);
        \draw[Arete](S5)--(E7);
        \draw[Arete](S22)--(E22);
        \draw[Arete](S222)--(E222);
    \end{tikzpicture}}\end{split}\;\right)
    \begin{split} \quad = \quad \end{split}
    \begin{split}\scalebox{.25}{\begin{tikzpicture}[yscale=.65]
        \node[Feuille](S1)at(0,0){};
        \node[Feuille](S2)at(2,0){};
        \node[Feuille](S3)at(4.5,0){};
        \node[Feuille](S4)at(8,0){};
        \node[Operateur](N1)at(1,-3.5){\begin{math}\La\end{math}};
        \node[Operateur,Marque1](N2)at(4.5,-2){\begin{math}\Lb\end{math}};
        \node[Operateur](N3)at(7,-5){\begin{math}\La\end{math}};
        \node[Feuille](E1)at(0,-7){};
        \node[Feuille](E2)at(2,-7){};
        \node[Feuille](E3)at(3,-7){};
        \node[Feuille](E4)at(4.5,-7){};
        \node[Feuille](E5)at(6,-7){};
        \node[Feuille](E6)at(8,-7){};
        \draw[Arete](N1)--(S1);
        \draw[Arete](N1)--(S2);
        \draw[Arete](N1)--(E1);
        \draw[Arete](N1)--(E2);
        \draw[Arete](N2)--(S3);
        \draw[Arete](N2)--(E3);
        \draw[Arete](N2)--(E4);
        \draw[Arete](N2)--(N3);
        \draw[Arete](N3)--(S4);
        \draw[Arete](N3)--(E5);
        \draw[Arete](N3)--(E6);
    \end{tikzpicture}}\end{split}\,.
\end{equation}
By extension, we denote by $\Reduit(\Pca)$ the set of the reduced elements
of $\Pca$. Note that $\Unite_0$ belongs to~$\Reduit(\Pca)$.
\medskip

Besides, we say that an element $x$ of $\Pca$ is {\em indecomposable}
if its maximal decomposition consists in exactly one factor. Note that
$\Unite_0$ is not indecomposable while $\Unite_1$ is.
\medskip

\begin{Lemme} \label{lem:relation_element_et_son_reduit}
    Let $\Pca$ be a free PRO and $x, y \in \Pca$, such that $x = \Reduit(y)$.
    Then, by denoting by $(x_1, \dots, x_\ell)$ the maximal decomposition
    of $x$, there exists a unique sequence of nonnegative integers
    $p_1, \dots, p_\ell, p_{\ell + 1}$ such that
    \begin{equation}
        y = \Unite_{p_1} * x_1 * \Unite_{p_2} * x_2 *
            \dots * x_\ell * \Unite_{p_{\ell + 1}}.
    \end{equation}
\end{Lemme}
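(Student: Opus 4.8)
The plan is to read the required expression directly off the maximal decomposition of $y$. Writing $\Dec(y) = (y_1, \dots, y_m)$, the first thing I would establish is the precise shape of this word: by the definition of $\Reduit$, the factors $x_1, \dots, x_\ell$ of $x = \Reduit(y)$ are exactly the letters of $\Dec(y)$ different from $\Unite_1$, listed in order, so every letter of $\Dec(y)$ is either one of the $x_i$ or a copy of the wire $\Unite_1$. To see that nothing else can occur, note that each letter of a maximal decomposition is different from $\Unite_0$ and indecomposable (otherwise $\ell$ would not be maximal), and an indecomposable unit can only be $\Unite_1$ since $\Unite_p = \Unite_1 * \Unite_{p-1}$ for $p \geq 2$. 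Thus $\Dec(y)$ is the word $(x_1, \dots, x_\ell)$ with some wires inserted between and around its letters.

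For existence, I would set $p_1$ to be the number of wires of $\Dec(y)$ preceding $x_1$, set $p_{j+1}$ to be the number of wires strictly between $x_j$ and $x_{j+1}$ for $1 \leq j < \ell$, and set $p_{\ell+1}$ to be the number of wires following $x_\ell$. Starting from $y = y_1 * \dots * y_m$, I would merge each maximal run of consecutive wires into a single $\Unite_{p_j}$ using \eqref{equ:relation_unite_h}, and absorb the empty runs via the neutrality of $\Unite_0$ for $*$ recorded in \eqref{equ:relation_unite_zero}. This produces exactly $y = \Unite_{p_1} * x_1 * \Unite_{p_2} * \dots * x_\ell * \Unite_{p_{\ell+1}}$.

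For uniqueness, suppose two sequences $(p_j)$ and $(q_j)$ both put $y$ in the stated form. In each case, expanding every $\Unite_{p_j}$ (resp. $\Unite_{q_j}$) into its constituent wires while leaving the $x_i$ intact exhibits $y$ as a horizontal product of indecomposable elements distinct from $\Unite_0$, since wires are indecomposable and each $x_i$ is a factor of the maximal decomposition of $x$, hence indecomposable. By the uniqueness of the maximal decomposition, both expansions equal $\Dec(y)$. Because each $x_i$ differs from $\Unite_1$ (as $x$ is reduced), the positions of the $x_i$ inside $\Dec(y)$ are forced, so the number of wires in each gap is determined by $\Dec(y)$ alone; this gives $p_j = q_j$ for every $j$.

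The only genuinely delicate step is the first one: correctly arguing that $\Dec(y)$ is precisely the interleaving of $\Dec(x)$ with wires, that is, that $\Reduit$ deletes $\Unite_1$ letters and nothing more. Once this structural description is secured, existence is a matter of regrouping units and uniqueness is an immediate consequence of the uniqueness of the maximal decomposition together with the indecomposability of the $x_i$ and their being different from $\Unite_1$.
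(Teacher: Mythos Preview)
Your proof is correct and follows essentially the same route as the paper's: existence from the fact that $\Dec(x)$ is obtained from $\Dec(y)$ by deleting wires, uniqueness from the free monoid structure of $(\Pca,*)$. The paper dispatches both points in one sentence each, whereas you spell out the grouping of wires via \eqref{equ:relation_unite_h}--\eqref{equ:relation_unite_zero} and the recovery of $\Dec(y)$ by re-expanding the $\Unite_{p_j}$; this extra detail is sound, and your observation that each $x_i$ is indecomposable (needed so that the expanded product really is the maximal decomposition) is a point the paper leaves implicit.
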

\begin{proof}
    The existence comes from the fact that, since $x = \Reduit(y)$, the
    maximal decomposition of $x$ is obtained from the one of $y$ by
    suppressing the factors equal to the wire. The uniqueness comes from
    the fact that $\Pca$ is a free monoid for the horizontal composition $*$.
\end{proof}
\medskip

\section{From PROs to combinatorial Hopf algebras}
\label{sec:construction_H}
We introduce in this section the main construction of this work and review
some of its properties. In all this section, $\Pca$ is a free PRO
generated by a bigraded set $G$. Starting with $\Pca$, our
construction produces a bialgebra $\PvH(\Pca)$ whose bases are indexed
by the reduced elements of $\Pca$. We shall also extend this construction
over a class of non necessarily free PROs.
\medskip

\subsection{The Hopf algebra of a free PRO}
\label{subsec:PRO_libre_vers_AHC}
The bases of the vector space
\begin{equation}
    \PvH(\Pca) := \Vect(\Reduit(\Pca))
\end{equation}
are indexed by the reduced elements of $\Pca$. The elements $\Sbf_x$,
$x \in \Reduit(\Pca)$, form thus a basis of $\PvH(\Pca)$, called
{\em fundamental basis}. We endow $\PvH(\Pca)$ with a product
$\cdot : \PvH(\Pca) \otimes \PvH(\Pca) \to \PvH(\Pca)$ linearly defined,
for any reduced elements $x$ and $y$ of $\Pca$, by
\begin{equation}
    \Sbf_x \cdot \Sbf_y := \Sbf_{x * y},
\end{equation}
and with a coproduct
$\Delta : \PvH(\Pca) \to \PvH(\Pca) \otimes \PvH(\Pca)$ linearly defined,
for any reduced elements $x$ of $\Pca$, by
\begin{equation}
    \Delta\left(\Sbf_x\right) :=
    \sum_{\substack{y, z \in \Pca \\ y \circ z = x}}
    \Sbf_{\Reduit(y)} \otimes \Sbf_{\Reduit(z)}.
\end{equation}
\medskip

Throughout this section, we shall consider some examples involving
the free PRO generated by $G := G(2, 2) \sqcup G(3, 1)$ where
$G(2, 2) := \{\La\}$ and $G(3, 1) := \{\Lb\}$, denoted by $\AB$.
For instance, we have in $\PvH(\AB)$
\begin{equation}
    \Sbf_{
    \begin{split}\scalebox{.25}{\begin{tikzpicture}[yscale=.7]
        \node[Feuille](S1)at(0,0){};
        \node[Feuille](S2)at(2,0){};
        \node[Feuille](S3)at(4,0){};
        \node[Operateur](N1)at(1,-3.5){\begin{math}\La\end{math}};
        \node[Operateur,Marque1](N2)at(4,-2){\begin{math}\Lb\end{math}};
        \node[Operateur](N3)at(5,-5){\begin{math}\La\end{math}};
        \node[Feuille](E1)at(0,-7){};
        \node[Feuille](E2)at(2,-7){};
        \node[Feuille](E3)at(3,-7){};
        \node[Feuille](E4)at(4,-7){};
        \node[Feuille](E5)at(6,-7){};
        \draw[Arete](N1)--(S1);
        \draw[Arete](N1)--(S2);
        \draw[Arete](N1)--(E1);
        \draw[Arete](N1)--(E2);
        \draw[Arete](N2)--(S3);
        \draw[Arete](N2)--(E3);
        \draw[Arete](N3)--(E4);
        \draw[Arete](N3)--(E5);
        \draw[Arete](N2)edge[bend right=20] node[]{}(N3);
        \draw[Arete](N2)edge[bend left=20] node[]{}(N3);
    \end{tikzpicture}}\end{split}}
    \cdot
    \Sbf_{
    \begin{split}\scalebox{.25}{\begin{tikzpicture}[yscale=.7]
        \node[Feuille](S1)at(0,0){};
        \node[Feuille](S2)at(2,0){};
        \node[Feuille](S3)at(3,0){};
        \node[Feuille](S4)at(5,0){};
        \node[Operateur](N1)at(1,-2){\begin{math}\La\end{math}};
        \node[Operateur](N2)at(4,-2){\begin{math}\La\end{math}};
        \node[Feuille](E1)at(0,-4){};
        \node[Feuille](E2)at(2,-4){};
        \node[Feuille](E3)at(3,-4){};
        \node[Feuille](E4)at(5,-4){};
        \draw[Arete](N1)--(S1);
        \draw[Arete](N1)--(S2);
        \draw[Arete](N1)--(E1);
        \draw[Arete](N1)--(E2);
        \draw[Arete](N2)--(S3);
        \draw[Arete](N2)--(S4);
        \draw[Arete](N2)--(E3);
        \draw[Arete](N2)--(E4);
    \end{tikzpicture}}\end{split}}
    \enspace = \enspace
    \Sbf_{
    \begin{split}\scalebox{.25}{\begin{tikzpicture}[yscale=.7]
        \node[Feuille](S1)at(0,0){};
        \node[Feuille](S2)at(2,0){};
        \node[Feuille](S3)at(4,0){};
        \node[Feuille](S4)at(7,0){};
        \node[Feuille](S5)at(9,0){};
        \node[Feuille](S6)at(10,0){};
        \node[Feuille](S7)at(12,0){};
        \node[Operateur](N1)at(1,-3.5){\begin{math}\La\end{math}};
        \node[Operateur,Marque1](N2)at(4,-2){\begin{math}\Lb\end{math}};
        \node[Operateur](N3)at(5,-5){\begin{math}\La\end{math}};
        \node[Operateur](N4)at(8,-3.5){\begin{math}\La\end{math}};
        \node[Operateur](N5)at(11,-3.5){\begin{math}\La\end{math}};
        \node[Feuille](E1)at(0,-7){};
        \node[Feuille](E2)at(2,-7){};
        \node[Feuille](E3)at(3,-7){};
        \node[Feuille](E4)at(4,-7){};
        \node[Feuille](E5)at(6,-7){};
        \node[Feuille](E6)at(7,-7){};
        \node[Feuille](E7)at(9,-7){};
        \node[Feuille](E8)at(10,-7){};
        \node[Feuille](E9)at(12,-7){};
        \draw[Arete](N1)--(S1);
        \draw[Arete](N1)--(S2);
        \draw[Arete](N1)--(E1);
        \draw[Arete](N1)--(E2);
        \draw[Arete](N2)--(S3);
        \draw[Arete](N2)--(E3);
        \draw[Arete](N3)--(E4);
        \draw[Arete](N3)--(E5);
        \draw[Arete](N2)edge[bend right=20] node[]{}(N3);
        \draw[Arete](N2)edge[bend left=20] node[]{}(N3);
        \draw[Arete](N4)--(S4);
        \draw[Arete](N4)--(S5);
        \draw[Arete](N4)--(E6);
        \draw[Arete](N4)--(E7);
        \draw[Arete](N5)--(S6);
        \draw[Arete](N5)--(S7);
        \draw[Arete](N5)--(E8);
        \draw[Arete](N5)--(E9);
    \end{tikzpicture}}\end{split}}
\end{equation}
and
\begin{multline}
    \Delta \Sbf_{
    \begin{split}\scalebox{.25}{\begin{tikzpicture}[yscale=.7]
        \node[Feuille](S1)at(0,0){};
        \node[Feuille](S2)at(2,0){};
        \node[Feuille](S3)at(4,0){};
        \node[Operateur](N1)at(1,-3.5){\begin{math}\La\end{math}};
        \node[Operateur,Marque1](N2)at(4,-2){\begin{math}\Lb\end{math}};
        \node[Operateur](N3)at(5,-5){\begin{math}\La\end{math}};
        \node[Feuille](E1)at(0,-7){};
        \node[Feuille](E2)at(2,-7){};
        \node[Feuille](E3)at(3,-7){};
        \node[Feuille](E4)at(4,-7){};
        \node[Feuille](E5)at(6,-7){};
        \draw[Arete](N1)--(S1);
        \draw[Arete](N1)--(S2);
        \draw[Arete](N1)--(E1);
        \draw[Arete](N1)--(E2);
        \draw[Arete](N2)--(S3);
        \draw[Arete](N2)--(E3);
        \draw[Arete](N3)--(E4);
        \draw[Arete](N3)--(E5);
        \draw[Arete](N2)edge[bend right=20] node[]{}(N3);
        \draw[Arete](N2)edge[bend left=20] node[]{}(N3);
    \end{tikzpicture}}\end{split}}
    \enspace = \enspace
    \Sbf_{\Unite_0} \otimes
    \Sbf_{
    \begin{split}\scalebox{.25}{\begin{tikzpicture}[yscale=.7]
        \node[Feuille](S1)at(0,0){};
        \node[Feuille](S2)at(2,0){};
        \node[Feuille](S3)at(4,0){};
        \node[Operateur](N1)at(1,-3.5){\begin{math}\La\end{math}};
        \node[Operateur,Marque1](N2)at(4,-2){\begin{math}\Lb\end{math}};
        \node[Operateur](N3)at(5,-5){\begin{math}\La\end{math}};
        \node[Feuille](E1)at(0,-7){};
        \node[Feuille](E2)at(2,-7){};
        \node[Feuille](E3)at(3,-7){};
        \node[Feuille](E4)at(4,-7){};
        \node[Feuille](E5)at(6,-7){};
        \draw[Arete](N1)--(S1);
        \draw[Arete](N1)--(S2);
        \draw[Arete](N1)--(E1);
        \draw[Arete](N1)--(E2);
        \draw[Arete](N2)--(S3);
        \draw[Arete](N2)--(E3);
        \draw[Arete](N3)--(E4);
        \draw[Arete](N3)--(E5);
        \draw[Arete](N2)edge[bend right=20] node[]{}(N3);
        \draw[Arete](N2)edge[bend left=20] node[]{}(N3);
    \end{tikzpicture}}\end{split}}
    \enspace + \enspace
    \Sbf_{
    \begin{split}\scalebox{.25}{\begin{tikzpicture}[yscale=.7]
        \node[Feuille](S1)at(0,0){};
        \node[Feuille](S2)at(2,0){};
        \node[Operateur](N1)at(1,-2){\begin{math}\La\end{math}};
        \node[Feuille](E1)at(0,-4){};
        \node[Feuille](E2)at(2,-4){};
        \draw[Arete](N1)--(S1);
        \draw[Arete](N1)--(S2);
        \draw[Arete](N1)--(E1);
        \draw[Arete](N1)--(E2);
    \end{tikzpicture}}\end{split}}
    \otimes
    \Sbf_{
    \begin{split}\scalebox{.25}{\begin{tikzpicture}[yscale=.7]
        \node[Feuille](S3)at(4,0){};
        \node[Operateur,Marque1](N2)at(4,-2){\begin{math}\Lb\end{math}};
        \node[Operateur](N3)at(5,-5){\begin{math}\La\end{math}};
        \node[Feuille](E3)at(3,-7){};
        \node[Feuille](E4)at(4,-7){};
        \node[Feuille](E5)at(6,-7){};
        \draw[Arete](N2)--(S3);
        \draw[Arete](N2)--(E3);
        \draw[Arete](N3)--(E4);
        \draw[Arete](N3)--(E5);
        \draw[Arete](N2)edge[bend right=20] node[]{}(N3);
        \draw[Arete](N2)edge[bend left=20] node[]{}(N3);
    \end{tikzpicture}}\end{split}}
    \enspace + \enspace
    \Sbf_{
    \begin{split}\scalebox{.25}{\begin{tikzpicture}[yscale=.7]
        \node[Feuille](S1)at(1,0){};
        \node[Operateur,Marque1](N1)at(1,-2){\begin{math}\Lb\end{math}};
        \node[Feuille](E1)at(0,-4){};
        \node[Feuille](E2)at(1,-4){};
        \node[Feuille](E3)at(2,-4){};
        \draw[Arete](N1)--(S1);
        \draw[Arete](N1)--(E1);
        \draw[Arete](N1)--(E2);
        \draw[Arete](N1)--(E3);
    \end{tikzpicture}}\end{split}}
    \otimes
    \Sbf_{
    \begin{split}\scalebox{.25}{\begin{tikzpicture}[yscale=.7]
        \node[Feuille](S1)at(0,0){};
        \node[Feuille](S2)at(2,0){};
        \node[Feuille](S3)at(3,0){};
        \node[Feuille](S4)at(5,0){};
        \node[Operateur](N1)at(1,-2){\begin{math}\La\end{math}};
        \node[Operateur](N2)at(4,-2){\begin{math}\La\end{math}};
        \node[Feuille](E1)at(0,-4){};
        \node[Feuille](E2)at(2,-4){};
        \node[Feuille](E3)at(3,-4){};
        \node[Feuille](E4)at(5,-4){};
        \draw[Arete](N1)--(S1);
        \draw[Arete](N1)--(S2);
        \draw[Arete](N1)--(E1);
        \draw[Arete](N1)--(E2);
        \draw[Arete](N2)--(S3);
        \draw[Arete](N2)--(S4);
        \draw[Arete](N2)--(E3);
        \draw[Arete](N2)--(E4);
    \end{tikzpicture}}\end{split}} \\
    \enspace + \enspace
    \Sbf_{
    \begin{split}\scalebox{.25}{\begin{tikzpicture}[yscale=.7]
        \node[Feuille](S1)at(0,0){};
        \node[Feuille](S2)at(2,0){};
        \node[Feuille](S3)at(4,0){};
        \node[Operateur](N1)at(1,-2){\begin{math}\La\end{math}};
        \node[Operateur,Marque1](N2)at(4,-2){\begin{math}\Lb\end{math}};
        \node[Feuille](E1)at(0,-4){};
        \node[Feuille](E2)at(2,-4){};
        \node[Feuille](E3)at(3,-4){};
        \node[Feuille](E4)at(4,-4){};
        \node[Feuille](E5)at(5,-4){};
        \draw[Arete](N1)--(S1);
        \draw[Arete](N1)--(S2);
        \draw[Arete](N1)--(E1);
        \draw[Arete](N1)--(E2);
        \draw[Arete](N2)--(S3);
        \draw[Arete](N2)--(E3);
        \draw[Arete](N2)--(E4);
        \draw[Arete](N2)--(E5);
    \end{tikzpicture}}\end{split}}
    \otimes
    \Sbf_{
    \begin{split}\scalebox{.25}{\begin{tikzpicture}[yscale=.7]
        \node[Feuille](S1)at(0,0){};
        \node[Feuille](S2)at(2,0){};
        \node[Operateur](N1)at(1,-2){\begin{math}\La\end{math}};
        \node[Feuille](E1)at(0,-4){};
        \node[Feuille](E2)at(2,-4){};
        \draw[Arete](N1)--(S1);
        \draw[Arete](N1)--(S2);
        \draw[Arete](N1)--(E1);
        \draw[Arete](N1)--(E2);
    \end{tikzpicture}}\end{split}}
    \enspace + \enspace
    \Sbf_{
    \begin{split}\scalebox{.25}{\begin{tikzpicture}[yscale=.7]
        \node[Feuille](S3)at(4,0){};
        \node[Operateur,Marque1](N2)at(4,-2){\begin{math}\Lb\end{math}};
        \node[Operateur](N3)at(5,-5){\begin{math}\La\end{math}};
        \node[Feuille](E3)at(3,-7){};
        \node[Feuille](E4)at(4,-7){};
        \node[Feuille](E5)at(6,-7){};
        \draw[Arete](N2)--(S3);
        \draw[Arete](N2)--(E3);
        \draw[Arete](N3)--(E4);
        \draw[Arete](N3)--(E5);
        \draw[Arete](N2)edge[bend right=20] node[]{}(N3);
        \draw[Arete](N2)edge[bend left=20] node[]{}(N3);
    \end{tikzpicture}}\end{split}}
    \otimes
    \Sbf_{
    \begin{split}\scalebox{.25}{\begin{tikzpicture}[yscale=.7]
        \node[Feuille](S1)at(0,0){};
        \node[Feuille](S2)at(2,0){};
        \node[Operateur](N1)at(1,-2){\begin{math}\La\end{math}};
        \node[Feuille](E1)at(0,-4){};
        \node[Feuille](E2)at(2,-4){};
        \draw[Arete](N1)--(S1);
        \draw[Arete](N1)--(S2);
        \draw[Arete](N1)--(E1);
        \draw[Arete](N1)--(E2);
    \end{tikzpicture}}\end{split}}
    \enspace + \enspace
    \Sbf_{
    \begin{split}\scalebox{.25}{\begin{tikzpicture}[yscale=.7]
        \node[Feuille](S1)at(0,0){};
        \node[Feuille](S2)at(2,0){};
        \node[Feuille](S3)at(4,0){};
        \node[Operateur](N1)at(1,-3.5){\begin{math}\La\end{math}};
        \node[Operateur,Marque1](N2)at(4,-2){\begin{math}\Lb\end{math}};
        \node[Operateur](N3)at(5,-5){\begin{math}\La\end{math}};
        \node[Feuille](E1)at(0,-7){};
        \node[Feuille](E2)at(2,-7){};
        \node[Feuille](E3)at(3,-7){};
        \node[Feuille](E4)at(4,-7){};
        \node[Feuille](E5)at(6,-7){};
        \draw[Arete](N1)--(S1);
        \draw[Arete](N1)--(S2);
        \draw[Arete](N1)--(E1);
        \draw[Arete](N1)--(E2);
        \draw[Arete](N2)--(S3);
        \draw[Arete](N2)--(E3);
        \draw[Arete](N3)--(E4);
        \draw[Arete](N3)--(E5);
        \draw[Arete](N2)edge[bend right=20] node[]{}(N3);
        \draw[Arete](N2)edge[bend left=20] node[]{}(N3);
    \end{tikzpicture}}\end{split}}
    \otimes
    \Sbf_{\Unite_0}\,.
\end{multline}
\medskip

\begin{Lemme} \label{lem:PRO_vers_AHC_coassociativite}
    Let $\Pca$ be a free PRO. Then, the coproduct $\Delta$ of $\PvH(\Pca)$
    is coassociative.
\end{Lemme}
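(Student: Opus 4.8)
The plan is to show that the two iterated coproducts $(\Delta \otimes \mathrm{id}) \circ \Delta$ and $(\mathrm{id} \otimes \Delta) \circ \Delta$ coincide on every basis element $\Sbf_x$, $x \in \Reduit(\Pca)$, by checking that both equal the single three-fold sum
\begin{equation}
    \Delta^{(2)}(\Sbf_x) :=
    \sum_{\substack{a, b, c \in \Pca \\ a \circ b \circ c = x}}
    \Sbf_{\Reduit(a)} \otimes \Sbf_{\Reduit(b)} \otimes \Sbf_{\Reduit(c)},
\end{equation}
which is well defined since the vertical composition is associative by~\eqref{equ:assoc_compo_v}. The difficulty in doing this naively is that a second application of $\Delta$ must decompose the elements $\Reduit(y)$ produced by the first application, rather than the elements $y$ themselves. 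To handle this cleanly I would introduce, for \emph{every} element $w$ of $\Pca$ (reduced or not), the auxiliary quantity
\begin{equation}
    \Delta'(w) :=
    \sum_{\substack{a, b \in \Pca \\ a \circ b = w}}
    \Sbf_{\Reduit(a)} \otimes \Sbf_{\Reduit(b)}
    \in \PvH(\Pca) \otimes \PvH(\Pca),
\end{equation}
so that $\Delta(\Sbf_x) = \Delta'(x)$ whenever $x$ is reduced. The central claim to establish is the \emph{wire invariance} $\Delta'(w) = \Delta'(\Reduit(w))$, expressing that the coproduct is blind to the wire factors of $w$.

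To prove wire invariance I would proceed in two moves. First, since $\Pca$ is free as a monoid for $*$ and $\Reduit$ merely deletes the factors equal to $\Unite_1$, one has $\Reduit(u * v) = \Reduit(u) * \Reduit(v)$. Second, Lemma~\ref{lem:regle_du_carre} provides, for each factorization $a \circ b = u * v$, unique elements realizing $a = u_1 * v_1$ and $b = u_2 * v_2$ with $u = u_1 \circ u_2$ and $v = v_1 \circ v_2$; conversely relation~\eqref{equ:compo_h_v} turns any pair of factorizations of $u$ and of $v$ into a factorization of $u * v$. This yields a bijection and hence the multiplicativity $\Delta'(u * v) = \Delta'(u) \cdot \Delta'(v)$, where $\cdot$ is the componentwise product of $\PvH(\Pca) \otimes \PvH(\Pca)$ induced by $\Sbf_s \cdot \Sbf_t = \Sbf_{s * t}$. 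Because a unit $\Unite_p$ admits (by additivity of the degree under $\circ$) only the factorization $\Unite_p \circ \Unite_p$ and satisfies $\Reduit(\Unite_p) = \Unite_0$, we get $\Delta'(\Unite_p) = \Sbf_{\Unite_0} \otimes \Sbf_{\Unite_0}$, the unit of $\PvH(\Pca) \otimes \PvH(\Pca)$. Writing $w$ via Lemma~\ref{lem:relation_element_et_son_reduit} as an alternating horizontal product of units $\Unite_{p_i}$ and of the factors of $\Reduit(w)$, multiplicativity then collapses every unit contribution and leaves exactly $\Delta'(\Reduit(w))$.

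With wire invariance available the conclusion follows by a formal reindexing. For the left-hand side,
\begin{equation}
    (\Delta \otimes \mathrm{id}) \Delta(\Sbf_x)
    = \sum_{\substack{y, z \in \Pca \\ y \circ z = x}}
      \Delta'(\Reduit(y)) \otimes \Sbf_{\Reduit(z)}
    = \sum_{\substack{y, z \in \Pca \\ y \circ z = x}}
      \sum_{\substack{a, b \in \Pca \\ a \circ b = y}}
      \Sbf_{\Reduit(a)} \otimes \Sbf_{\Reduit(b)} \otimes \Sbf_{\Reduit(z)},
\end{equation}
where the first equality uses $\Delta'(\Reduit(y)) = \Delta'(y)$. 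The map $(y,z,a,b) \mapsto (a,b,z)$ is, by associativity~\eqref{equ:assoc_compo_v} of $\circ$, a bijection onto the triples with $a \circ b \circ c = x$ (set $c := z$), so this expression is exactly $\Delta^{(2)}(\Sbf_x)$. Treating the right-hand side symmetrically, applying wire invariance to the factor $\Reduit(z)$ and then merging the nested sums through the same associativity, gives once more $\Delta^{(2)}(\Sbf_x)$. Equality of the two sides establishes coassociativity.

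The step I expect to be the genuine obstacle is the wire invariance $\Delta'(w) = \Delta'(\Reduit(w))$, since this is where the planarity and freeness of $\Pca$ enter, precisely through Lemma~\ref{lem:regle_du_carre} and the uniqueness of the factorization of units; everything after it is a routine reindexing resting only on the associativity of the vertical composition.
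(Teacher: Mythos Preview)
Your proof is correct and follows essentially the same route as the paper. The paper introduces an auxiliary coalgebra structure $\Delta'$ on $\Vect(\Pca)$ and a surjection $\phi(\Rit_x)=\Sbf_{\Reduit(x)}$, proves that $(\phi\otimes\phi)\Delta'=\Delta\phi$ (which is exactly your wire invariance $\Delta'(w)=\Delta'(\Reduit(w))$, established by the same iterated use of Lemma~\ref{lem:regle_du_carre} and Lemma~\ref{lem:relation_element_et_son_reduit}), and then transports coassociativity along $\phi$; your version bypasses the auxiliary vector space and packages the iterated Lemma~\ref{lem:regle_du_carre} argument as the multiplicativity $\Delta'(u*v)=\Delta'(u)\cdot\Delta'(v)$ together with $\Delta'(\Unite_p)=\Sbf_{\Unite_0}\otimes\Sbf_{\Unite_0}$, which is a clean and equivalent formulation.
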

\begin{proof}
    The bases of the vector space $\Vect(\Pca)$ are indexed by the
    (non-necessarily reduced) elements of $\Pca$. Then, the elements
    $\Rit_x$, $x \in \Pca$, form a basis of $\Vect(\Pca)$. Let us
    consider the coproduct $\Delta'$ defined, for any $x \in \Pca$, by
    \begin{equation}
        \Delta'\left(\Rit_x\right) :=
        \sum_{\substack{y, z \in \Pca \\ y \circ z = x}}
        \Rit_y \otimes \Rit_z.
     \end{equation}
    The associativity of the vertical composition $\circ$ of $\Pca$
    (see \eqref{equ:assoc_compo_v}) implies that $\Delta'$ is
    coassociative and hence, that $\Vect(\Pca)$ together with $\Delta'$
    form a coalgebra.
    \smallskip

    Consider now the map $\phi : \Vect(\Pca) \to \PvH(\Pca)$ defined, for
    any $x \in \Pca$, by $\phi(\Rit_x) := \Sbf_{\Reduit(x)}$. Let us show
    that $\phi$ commutes with the coproducts $\Delta$ and $\Delta'$,
    that is, $(\phi \otimes \phi) \Delta' = \Delta \phi$.
    Let $x \in \Pca$. By Lemma \ref{lem:relation_element_et_son_reduit},
    by denoting by $(x_1, \dots, x_\ell)$ the maximal decomposition of
    $\Reduit(x)$, there is a unique way to write $x$ as
    $x = \Unite_{p_1} * x_1 * \dots * x_\ell * \Unite_{\ell + 1}$
    where the $p_i$ are some integers. Then, thanks to the associativity
    of $*$ (see \eqref{equ:assoc_compo_h}), by iteratively applying
    Lemma \ref{lem:regle_du_carre}, we have
    \begin{align}
        (\phi \otimes \phi) \Delta'(\Rit_x)
        & =
        \sum_{\substack{y, z \in \Pca \\ y \circ z = x}}
        \Sbf_{\Reduit(y)} \otimes \Sbf_{\Reduit(z)} \\
        & =
        \sum_{\substack{y_1, \dots, y_\ell \in \Pca \\
            z_1, \dots, z_\ell \in \Pca \\
            (\Unite_{p_1} * y_1 * \dots * y_\ell * \Unite_{\ell + 1}) \\
            \qquad \circ
            (\Unite_{p_1} * z_1 * \dots * z_\ell * \Unite_{\ell + 1}) = \\
            \qquad \Unite_{p_1} * x_1 * \dots * x_\ell * \Unite_{\ell + 1}}}
        \Sbf_{\Reduit(\Unite_{p_1} * y_1 * \dots *
            y_\ell * \Unite_{\ell + 1})} \otimes
            \Sbf_{\Reduit(\Unite_{p_1} * z_1 * \dots *
                z_\ell * \Unite_{\ell + 1})} \\
        & =
        \sum_{\substack{y_1, \dots, y_\ell \in \Pca \\
            z_1, \dots, z_\ell \in \Pca \\
            (y_1 * \dots * y_\ell) \circ (z_1 * \dots * z_\ell) =
            x_1 * \dots * x_\ell}}
        \Sbf_{\Reduit(y_1 * \dots * y_\ell)} \otimes
            \Sbf_{\Reduit(z_1 * \dots * z_\ell)} \\
        & =
        \sum_{\substack{y, z \in \Pca \\ y \circ z = \Reduit(x)}}
            \Sbf_{\Reduit(y)} \otimes \Sbf_{\Reduit(z)} \\
        & =
        \Delta\left(\phi\left(\Rit_x\right)\right).
    \end{align}
    \smallskip

    Now, the coassociativity of $\Delta$ comes from the fact that $\phi$
    is a surjective map commuting with $\Delta$ and $\Delta'$. In more
    details, if $x$ is a reduced element of $\Pca$ and $I$ is the
    identity map on $\PvH(\Pca)$, we have
    \begin{align}
        (\Delta \otimes I) \Delta\left(\Sbf_x\right)
        & =
        (\Delta \otimes I) \Delta(\phi\left(\Rit_x\right)) \\
        & =
        (\phi \otimes \phi \otimes \phi)(\Delta' \otimes I)
            \Delta'\left(\Rit_x\right) \\
        & =
        (\phi \otimes \phi \otimes \phi)(I \otimes \Delta')
            \Delta'\left(\Rit_x\right) \\
        & =
        (I \otimes \Delta) \Delta(\phi\left(\Rit_x\right)) \\
        & =
        (I \otimes \Delta) \Delta\left(\Sbf_x\right).
    \end{align}
\end{proof}
\medskip

\begin{Lemme} \label{lem:PRO_vers_AHC_compatibilite}
    Let $\Pca$ be a free PRO. Then, the coproduct $\Delta$ of $\PvH(\Pca)$
    is a morphism of algebras.
\end{Lemme}
\begin{proof}
    Let $x$ and $y$ be two reduced elements of $\Pca$. We have
    \begin{equation} \label{equ:PRO_vers_AHC_coproduit_morphisme_1}
        \Delta(\Sbf_x \cdot \Sbf_y) =
        \sum_{\substack{z, t \in \Pca \\ z \circ t = x * y}}
        \Sbf_{\Reduit(z)} \otimes \Sbf_{\Reduit(t)}
    \end{equation}
    and
    \begin{equation} \label{equ:PRO_vers_AHC_coproduit_morphisme_2}
        \Delta(\Sbf_x) \Delta(\Sbf_y) =
        \sum_{\substack{x_1, x_2, y_1, y_2 \in \Pca \\ x_1 \circ x_2 = x \\
            y_1 \circ y_2 = y}}
        \Sbf_{\Reduit(x_1) * \Reduit(y_1)} \otimes
        \Sbf_{\Reduit(x_2) * \Reduit(y_2)}.
    \end{equation}
    The coproduct $\Delta$ is a morphism of algebras if
    \eqref{equ:PRO_vers_AHC_coproduit_morphisme_1} and
    \eqref{equ:PRO_vers_AHC_coproduit_morphisme_2} are equal. Let us
    show that it is the case.
    \smallskip

    Assume that there are two elements $z$ and $t$ of $\Pca$ such that
    $z \circ t = x * y$. Then, the pair $(z, t)$ contributes to the
    coefficient of the tensor $\Sbf_{\Reduit(z)} \otimes \Sbf_{\Reduit(t)}$
    in \eqref{equ:PRO_vers_AHC_coproduit_morphisme_1}. Moreover, since
    $\Pca$ is free, by Lemma \ref{lem:regle_du_carre}, there exist four
    unique elements $x_1$, $x_2$, $y_1$, and $y_2$ of $\Pca$ such that
    $x = x_1 \circ x_2$, $y = y_1 \circ y_2$, $z = x_1 * y_1$, and
    $t = x_2 * y_2$. Then, since
    $\Reduit(x_1) * \Reduit(y_1) = \Reduit(x_1 * y_1) = \Reduit(z)$
    and $\Reduit(x_2) * \Reduit(y_2) = \Reduit(x_2 * y_2) = \Reduit(t)$,
    the quadruple $(x_1, x_2, y_1, y_2)$, wholly and uniquely determined
    by the pair $(z, t)$, contributes to the coefficient of the tensor
    $\Sbf_{\Reduit(z)} \otimes \Sbf_{\Reduit(t)}$ in
    \eqref{equ:PRO_vers_AHC_coproduit_morphisme_2}.
    \smallskip

    Conversely, assume that there are four elements $x_1$, $x_2$, $y_1$,
    and $y_2$ in $\Pca$ such that $x_1 \circ x_2 = x$ and $y_1 \circ y_2 = y$.
    Then, the quadruple $(x_1, x_2, y_1, y_2)$ contributes to the
    coefficient of the tensor
    $\Sbf_{\Reduit(x_1) * \Reduit(y_1)} \otimes \Sbf_{\Reduit(x_2) * \Reduit(y_2)}$
    in \eqref{equ:PRO_vers_AHC_coproduit_morphisme_2}. Now, by
    \eqref{equ:compo_h_v}, we have
    \begin{equation}
        x * y = (x_1 \circ x_2) * (y_1 \circ y_2) = (x_1 * y_1) \circ (x_2 * y_2).
    \end{equation}
    Then, since $\Reduit(x_1) * \Reduit(y_1) = \Reduit(x_1 * y_1)$
    and $\Reduit(x_2) * \Reduit(y_2) = \Reduit(x_2 * y_2)$, the pair
    $(x_1 * y_1, x_2 * y_2)$, wholly and uniquely determined by the
    quadruple $(x_1, x_2, y_1, y_2)$, contributes to the coefficient of
    the tensor
    $\Sbf_{\Reduit(x_1) * \Reduit(y_1)} \otimes \Sbf_{\Reduit(x_2) * \Reduit(y_2)}$
    in \eqref{equ:PRO_vers_AHC_coproduit_morphisme_1}.
    \smallskip

    Hence, the coefficient of any tensor is the same in
    \eqref{equ:PRO_vers_AHC_coproduit_morphisme_1} and in
    \eqref{equ:PRO_vers_AHC_coproduit_morphisme_2}. Then, these
    expressions are equal.
\end{proof}
\medskip

\begin{Theoreme} \label{thm:PRO_vers_AHC_bigebre}
    Let $\Pca$ be a free PRO. Then, $\PvH(\Pca)$ is a bialgebra.
\end{Theoreme}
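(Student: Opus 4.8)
The plan is to assemble the bialgebra structure from pieces, most of which are already in place. A bialgebra requires an associative unital algebra, a coassociative counital coalgebra, and the compatibility axioms stating that the coproduct and the counit are algebra morphisms (equivalently, that the product and unit are coalgebra morphisms). Lemma \ref{lem:PRO_vers_AHC_coassociativite} already provides the coassociativity of $\Delta$, and Lemma \ref{lem:PRO_vers_AHC_compatibilite} already provides that $\Delta$ is an algebra morphism. What remains is to check the algebra structure, to introduce a counit, and to verify the counit axiom together with the residual compatibility conditions.

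First I would verify that $\cdot$ endows $\PvH(\Pca)$ with the structure of an associative unital algebra. Associativity follows directly from the associativity \eqref{equ:assoc_compo_h} of the horizontal composition $*$, and $\Sbf_{\Unite_0}$ is a two-sided unit by the relation \eqref{equ:relation_unite_zero}. One point deserves attention: the product must land in $\Reduit(\Pca)$. If $x$ and $y$ are reduced, then since $\Pca$ is free, the maximal decomposition of $x * y$ is the concatenation of those of $x$ and $y$; as none of these factors equals $\Unite_1$, the product $x * y$ is again reduced, so $\Sbf_x \cdot \Sbf_y = \Sbf_{x * y}$ is well defined.

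Next I would define the counit $\epsilon : \PvH(\Pca) \to \C$ by $\epsilon(\Sbf_{\Unite_0}) := 1$ and $\epsilon(\Sbf_x) := 0$ for every reduced $x \neq \Unite_0$, and verify the counit axiom $(\epsilon \otimes I) \Delta = I = (I \otimes \epsilon) \Delta$, where $I$ is the identity. The key computation isolates, in $\Delta(\Sbf_x)$, the terms with $\Reduit(y) = \Unite_0$. Such a $y$ is a horizontal composition of wires, hence $y = \Unite_{\Out(x)}$, and then $y \circ z = x$ forces $z = x$ by the unit relation \eqref{equ:relation_unite_v}; since $x$ is reduced, this yields the single surviving summand $\Sbf_{\Unite_0} \otimes \Sbf_x$, so that $(\epsilon \otimes I) \Delta(\Sbf_x) = \Sbf_x$. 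The symmetric argument, selecting the terms with $\Reduit(z) = \Unite_0$, handles $(I \otimes \epsilon) \Delta$.

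Finally I would dispatch the residual compatibility conditions. That $\epsilon$ is an algebra morphism is immediate: $\epsilon(\Sbf_x \cdot \Sbf_y) = \epsilon(\Sbf_{x * y})$ is nonzero exactly when $x * y = \Unite_0$, that is, when $x = y = \Unite_0$, matching $\epsilon(\Sbf_x) \epsilon(\Sbf_y)$. For the unit, since $\Pca$ is free, the element $\Unite_0$ is the only one of null input or output arity, so $y \circ z = \Unite_0$ forces $y = z = \Unite_0$, giving $\Delta(\Sbf_{\Unite_0}) = \Sbf_{\Unite_0} \otimes \Sbf_{\Unite_0}$ and $\epsilon(\Sbf_{\Unite_0}) = 1$. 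Combined with the two lemmas, all the bialgebra axioms then hold. I expect no serious obstacle here, since the genuine difficulties, namely coassociativity and the multiplicativity of $\Delta$, are precisely the content of the preceding lemmas; the only care needed is in the counit axiom, where the freeness of $\Pca$ and the unit relation \eqref{equ:relation_unite_v} are exactly what pin down the surviving summands.
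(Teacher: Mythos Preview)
Your proposal is correct and follows the same approach as the paper: associativity of $\cdot$ from associativity of $*$, then invoke Lemmas~\ref{lem:PRO_vers_AHC_coassociativite} and~\ref{lem:PRO_vers_AHC_compatibilite} for coassociativity and multiplicativity of $\Delta$. The paper's proof is terser and does not spell out the unit, counit, and their compatibilities; your verification of these routine points (reducedness of $x * y$, the explicit counit, the identification of the surviving summands via~\eqref{equ:relation_unite_v}, and $\Delta(\Sbf_{\Unite_0}) = \Sbf_{\Unite_0} \otimes \Sbf_{\Unite_0}$) is a welcome addition rather than a different route.
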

\begin{proof}
    The associativity of $\cdot$ comes directly from the associativity of
    the horizontal composition $*$ of $\Pca$ (see \eqref{equ:assoc_compo_h}).
    Moreover, by Lemmas \ref{lem:PRO_vers_AHC_coassociativite} and
    \ref{lem:PRO_vers_AHC_compatibilite}, the coproduct of $\PvH(\Pca)$
    is coassociative and is a morphism of algebras. Thus, $\PvH(\Pca)$ is
    a bialgebra.
\end{proof}
\medskip

\subsection{Properties of the construction}
Let us now study the general properties of the bialgebras obtained
by the construction $\PvH$.
\medskip

\subsubsection{Algebraic generators and freeness}
\begin{Proposition} \label{prop:PRO_vers_AHC_generation_liberte}
    Let $\Pca$ be a free PRO. Then, $\PvH(\Pca)$ is freely generated
    as an algebra by the set of all $\Sbf_g$, where the $g$ are
    indecomposable and reduced elements of $\Pca$.
\end{Proposition}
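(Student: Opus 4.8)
The plan is to recognize the algebra structure of $\PvH(\Pca)$ as the monoid algebra of the reduced elements under horizontal composition, and then to show that this monoid is free with the indecomposable reduced elements as free generators. Indeed, by definition the product satisfies $\Sbf_x \cdot \Sbf_y = \Sbf_{x * y}$, and the $\Sbf_x$ for $x \in \Reduit(\Pca)$ form a basis; hence, as an associative algebra, $\PvH(\Pca)$ is exactly the monoid algebra of $(\Reduit(\Pca), *, \Unite_0)$.

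First I would check that $\Reduit(\Pca)$ is a submonoid of $(\Pca, *, \Unite_0)$. Since the maximal decomposition of a horizontal composition $x * y$ is the concatenation of $\Dec(x)$ and $\Dec(y)$ (because $*$ is associative and $\Pca$ is a free monoid for $*$), the product of two reduced elements has no factor equal to $\Unite_1$ and is therefore reduced; moreover $\Unite_0$, whose maximal decomposition is the empty word, is reduced. Thus $\Reduit(\Pca)$ is closed under $*$ and contains the unit.

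Next I would identify the free generators. Every reduced element $x$ writes uniquely as $x = g_1 * \dots * g_\ell$, where $(g_1, \dots, g_\ell) = \Dec(x)$ is its maximal decomposition; each $g_i$ is then indecomposable, and since $x$ is reduced, no $g_i$ equals $\Unite_1$, so each $g_i$ is both indecomposable and reduced. Conversely, any product $g_1 * \dots * g_\ell$ of indecomposable reduced elements is reduced, and its maximal decomposition is exactly $(g_1, \dots, g_\ell)$, because each $g_i$ is indecomposable and the maximal decomposition is unique. This sets up a bijection between finite sequences of indecomposable reduced elements and reduced elements, which is precisely the statement that $(\Reduit(\Pca), *, \Unite_0)$ is the free monoid generated by the indecomposable reduced elements.

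The main point to be careful about is this uniqueness of factorization, which rests on the freeness of $\Pca$ as a monoid under $*$ (already invoked in Lemma \ref{lem:relation_element_et_son_reduit}): without it, distinct sequences of generators could collapse to the same reduced element and the algebra would fail to be free. Granting it, the conclusion is immediate, since the monoid algebra of a free monoid on a set $S$ is the free associative algebra on $S$. Therefore $\PvH(\Pca)$ is freely generated as an algebra by the $\Sbf_g$ with $g$ indecomposable and reduced.
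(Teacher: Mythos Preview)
Your proof is correct and follows essentially the same approach as the paper: both rely on the fact that the maximal decomposition of a reduced element expresses it uniquely as a $*$-product of indecomposable reduced factors, which gives generation and freeness. Your version is slightly more explicit in framing $\PvH(\Pca)$ as the monoid algebra of $(\Reduit(\Pca),*,\Unite_0)$ and in checking that $\Reduit(\Pca)$ is closed under $*$, but the core argument is identical to the paper's.
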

\begin{proof}
    Any reduced element $x$ of $\Pca$ can be written as
    $x = x_1 * \dots * x_\ell$ where $(x_1, \dots, x_\ell)$ is the
    maximal decomposition of $x$. This implies that in $\PvH(\Pca)$,
    we have
    $\Sbf_x = \Sbf_{x_1} \cdot \, \dots \, \cdot \Sbf_{x_\ell}$. Since
    for all $i \in [\ell]$, the $\Sbf_{x_i}$ are indecomposable and
    reduced elements of $\Pca$, the set of all $\Sbf_g$ generates
    $\PvH(\Pca)$. The uniqueness of the maximal decomposition of $x$
    implies that $\PvH(\Pca)$ is free on the $\Sbf_g$.
\end{proof}
\medskip

\subsubsection{Gradings}
There are several ways to define gradings for $\PvH(\Pca)$ to turn it
into a combinatorial Hopf algebra. For this purpose, we say that a map
$\omega : \Reduit(\Pca) \to \EnsNat$ is a {\em grading} of $\Pca$
if it satisfies the following four properties:
\begin{enumerate}[label = ({\it G{\arabic*})}]
    \item \label{item:propriete_bonne_graduation_1}
    for any reduced elements $x$ and $y$ of $\Pca$,
    $\omega(x * y) = \omega(x) + \omega(y)$;
    \item \label{item:propriete_bonne_graduation_2}
    for any reduced elements $x$ of $\Pca$ satisfying $x = y \circ z$ where
    $y, z \in \Pca$,
    $\omega(x) = \omega(\Reduit(y)) + \omega(\Reduit(z))$;
    \item \label{item:propriete_bonne_graduation_3}
    for any $n \geq 0$, the fiber $\omega^{-1}(n)$ is finite;
    \item \label{item:propriete_bonne_graduation_4}
    $\omega^{-1}(0) = \{\Unite_0\}$.
\end{enumerate}
\medskip

A very generic way to endow $\Pca$ with a grading consists in
providing a map $\omega : G \to \EnsNat \setminus \{0\}$
associating a positive integer with any generator of $\Pca$, namely its
{\em weight}; the degree $\omega(x)$ of any element $x$ of $\Pca$ being
the sum of the weights of the occurrences of the generators used to build $x$.
For instance, the map $\omega$ defined by $\omega(\La) := 3$ and
$\omega(\Lb) := 2$ is a grading of $\AB$ and we have
\begin{equation}
    \begin{split} \omega \end{split}\left(
    \begin{split}\scalebox{.25}{\begin{tikzpicture}[yscale=.65]
        \node[Feuille](S1)at(0,0){};
        \node[Feuille](S2)at(2,0){};
        \node[Feuille](S3)at(5,0){};
        \node[Feuille](S4)at(7,0){};
        \node[Operateur](N1)at(1,-2){\begin{math}\La\end{math}};
        \node[Operateur,Marque1](N2)at(3,-4){\begin{math}\Lb\end{math}};
        \node[Operateur](N3)at(6,-3){\begin{math}\La\end{math}};
        \node[Feuille](E1)at(0,-6){};
        \node[Feuille](E2)at(2,-6){};
        \node[Feuille](E3)at(3,-6){};
        \node[Feuille](E4)at(4,-6){};
        \node[Feuille](E5)at(5,-6){};
        \node[Feuille](E6)at(7,-6){};
        \draw[Arete](N1)--(S1);
        \draw[Arete](N1)--(S2);
        \draw[Arete](N3)--(S3);
        \draw[Arete](N3)--(S4);
        \draw[Arete](N1)--(E1);
        \draw[Arete](N1)--(N2);
        \draw[Arete](N2)--(E2);
        \draw[Arete](N2)--(E3);
        \draw[Arete](N2)--(E4);
        \draw[Arete](N3)--(E5);
        \draw[Arete](N3)--(E6);
    \end{tikzpicture}}\end{split}\right)
    \begin{split} = 8\end{split}.
\end{equation}
\medskip

\begin{Proposition} \label{prop:PRO_vers_AHC_graduation}
    Let $\Pca$ be a free PRO and $\omega$ be a grading of $\Pca$.
    Then, with the grading
    \begin{equation} \label{equ:graduation}
        \PvH(\Pca) =
        \bigoplus_{n \geq 0}
        \Vect\left(\Sbf_x : x \in \Reduit(\Pca) \mbox{ and }
        \omega(x) = n \right),
    \end{equation}
    $\PvH(\Pca)$ is a combinatorial Hopf algebra.
\end{Proposition}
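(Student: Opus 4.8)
I want to show that with the grading induced by $\omega$, the bialgebra $\PvH(\Pca)$ becomes a combinatorial Hopf algebra. By Theorem~\ref{thm:PRO_vers_AHC_bigebre} we already know $\PvH(\Pca)$ is a bialgebra, so the work splits into two parts: first, verifying that the decomposition~\eqref{equ:graduation} is actually a bialgebra grading (compatible with both the product and the coproduct), and second, checking that the resulting graded bialgebra satisfies the finiteness and connectedness conditions in the definition of a combinatorial Hopf algebra, from which the existence of an antipode follows automatically as recalled in the text.

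Let me think about what a combinatorial Hopf algebra needs. I'd need compatibility of the grading with the product, compatibility with the coproduct, finite-dimensionality of each homogeneous component, and that the degree-$0$ component is one-dimensional.

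Let me reason about where the four axioms of a grading come in. Property~\ref{item:propriete_bonne_graduation_1} is exactly what makes the product $\Sbf_x \cdot \Sbf_y = \Sbf_{x*y}$ respect the grading: it says $\omega(x*y) = \omega(x) + \omega(y)$, so the product of a degree-$m$ and a degree-$n$ element lands in degree $m+n$. Property~\ref{item:propriete_bonne_graduation_2} is the coproduct compatibility: for a reduced $x$ of degree $n$, every term $\Sbf_{\Reduit(y)} \otimes \Sbf_{\Reduit(z)}$ in $\Delta(\Sbf_x)$ arising from a factorization $y \circ z = x$ satisfies $\omega(\Reduit(y)) + \omega(\Reduit(z)) = \omega(x) = n$, so $\Delta$ sends $\PvH(\Pca)_n$ into $\bigoplus_{i+j=n} \PvH(\Pca)_i \otimes \PvH(\Pca)_j$. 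The plan is to state each of these two compatibilities as a direct consequence of the corresponding axiom, reading the definitions of $\cdot$ and $\Delta$ off their formulas.

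For the combinatorial conditions, property~\ref{item:propriete_bonne_graduation_3} gives finite-dimensionality: the basis of $\PvH(\Pca)_n$ is indexed by $\{x \in \Reduit(\Pca) : \omega(x) = n\}$, which is the fiber $\omega^{-1}(n)$, assumed finite. Property~\ref{item:propriete_bonne_graduation_4} handles the degree-$0$ part: $\omega^{-1}(0) = \{\Unite_0\}$ forces $\PvH(\Pca)_0 = \Vect(\Sbf_{\Unite_0})$, which is one-dimensional, so the bialgebra is connected. With $\PvH(\Pca) = \bigoplus_{n \geq 0} \PvH(\Pca)_n$ now a graded bialgebra whose pieces are finite-dimensional and whose degree-$0$ piece is $\C$, it is by definition a combinatorial Hopf algebra, and as noted in Section~\ref{sec:background} the antipode exists because it can be built recursively degree by degree.

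I don't expect a genuine obstacle here; the statement is essentially bookkeeping that matches the four grading axioms to the four defining features of a combinatorial Hopf algebra. The one point requiring a little care is coproduct compatibility: axiom~\ref{item:propriete_bonne_graduation_2} is stated only for factorizations $x = y \circ z$ of a \emph{reduced} $x$, which is precisely the shape of the terms appearing in $\Delta(\Sbf_x)$, so I should make sure to invoke it term by term over the summation index $\{(y,z) : y \circ z = x\}$ rather than trying to prove a global multiplicativity statement. Once that is observed, the verification is immediate.
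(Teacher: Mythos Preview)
Your proposal is correct and follows essentially the same approach as the paper: both arguments invoke Theorem~\ref{thm:PRO_vers_AHC_bigebre} for the bialgebra structure, then match the four grading axioms \ref{item:propriete_bonne_graduation_1}--\ref{item:propriete_bonne_graduation_4} respectively to product compatibility, coproduct compatibility, finite-dimensionality of homogeneous components, and connectedness. The paper additionally spells out that $\Sbf_{\Unite_0}$ is the unit and that the counit is well-defined, but this is implicit in your treatment since the bialgebra structure already provides these and you identify $\PvH(\Pca)_0 = \Vect(\Sbf_{\Unite_0})$.
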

\begin{proof}
    Notice first that $\Sbf_{\Unite_0}$ is the neutral element of the
    product of $\PvH(\Pca)$, and, for any reduced element $x$ of $\Pca$
    of degree different from $0$, the coproduct $\Delta(\Sbf_x)$ contains
    the tensors $\Sbf_{\Unite_0} \otimes \Sbf_x$ and
    $\Sbf_x \otimes \Sbf_{\Unite_0}$. Then, together with the fact that
    by \ref{item:propriete_bonne_graduation_4}, $\Sbf_{\Unite_0}$ is an
    element of the homogeneous component of degree $0$ of $\PvH(\Pca)$,
    $\PvH(\Pca)$ admits a unit and a counit. Moreover,
    \ref{item:propriete_bonne_graduation_3} implies that for all $n \geq 0$,
    the homogeneous components of degree $n$ of $\PvH(\Pca)$ are finite,
    and \ref{item:propriete_bonne_graduation_4} implies that $\PvH(\Pca)$
    is connected.
    \smallskip

    Besides, respectively by \ref{item:propriete_bonne_graduation_1} and
    by \ref{item:propriete_bonne_graduation_2}, the grading provided by
    $\omega$ is compatible with the product and the coproduct of
    $\PvH(\Pca)$.
    \smallskip

    Hence, together with the fact that, by Theorem \ref{thm:PRO_vers_AHC_bigebre},
    $\PvH(\Pca)$ is a bialgebra, it is also a combinatorial Hopf algebra.
\end{proof}
\medskip

\subsubsection{Antipode}
Since the antipode of a combinatorial Hopf algebra can be computed by
induction on the degree, we obtain an expression for the one of $\PvH(\Pca)$
when $\Pca$ admits a grading. This expression is an instance of the
Takeuchi formula \cite{Tak71} and is particularly simple since the product
of $\PvH(\Pca)$ is multiplicative.
\medskip

\begin{Proposition} \label{prop:PRO_vers_AHC_antipode}
    Let $\Pca$ be a free PRO admitting a grading. For any reduced element
    $x$ of $\Pca$ different from $\Unite_0$, the antipode $\nu$ of $\PvH(\Pca)$
    satisfies
    \begin{equation} \label{equ:PRO_vers_AHC_antipode}
        \nu(\Sbf_x) =
        \sum_{\substack{x_1, \dots, x_\ell \in \Pca, \ell \geq 1 \\
        x_1 \circ \dots \circ x_\ell = x \\
        \Reduit(x_i) \ne \Unite_0, i \in [\ell]}}
        (-1)^\ell \;
        \Sbf_{\Reduit(x_1 * \dots * x_\ell)}.
    \end{equation}
\end{Proposition}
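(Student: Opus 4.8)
The plan is to derive the antipode formula from the standard inductive/Takeuchi-type formula for the antipode of a connected graded bialgebra, using the fact established in Proposition~\ref{prop:PRO_vers_AHC_generation_liberte} that $\PvH(\Pca)$ is freely generated as a commutative-in-spirit (here, just free associative) algebra by indecomposable reduced elements, together with the explicit multiplicative form of the product $\Sbf_x \cdot \Sbf_y = \Sbf_{x * y}$.

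First I would recall the Takeuchi formula for the antipode $\nu$ of a connected graded bialgebra: for any element $a$ of positive degree,
\begin{equation}
    \nu(a) = \sum_{k \geq 1} (-1)^k \mu^{(k-1)} \bar{\Delta}^{(k-1)}(a),
\end{equation}
where $\bar{\Delta}$ is the reduced coproduct (removing the primitive tensors $\Sbf_{\Unite_0} \otimes a$ and $a \otimes \Sbf_{\Unite_0}$), $\bar{\Delta}^{(k-1)}$ is its $(k-1)$-fold iteration landing in the $k$-fold tensor power, and $\mu^{(k-1)}$ is the iterated product. This sum is finite because $\PvH(\Pca)$ is graded with finite-dimensional components and $\bar{\Delta}$ strictly lowers degree on each tensor factor. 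Proposition~\ref{prop:PRO_vers_AHC_graduation} guarantees we are in this connected graded setting whenever $\Pca$ admits a grading.

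Next I would unfold the iterated reduced coproduct. By the definition of $\Delta$ and the coassociativity from Lemma~\ref{lem:PRO_vers_AHC_coassociativite}, the $(k-1)$-fold iterate $\Delta^{(k-1)}(\Sbf_x)$ is indexed by factorizations $x_1 \circ x_2 \circ \dots \circ x_k = x$ with $x_i \in \Pca$, contributing $\Sbf_{\Reduit(x_1)} \otimes \dots \otimes \Sbf_{\Reduit(x_k)}$; this follows by induction using associativity of the vertical composition $\circ$ (see~\eqref{equ:assoc_compo_v}). Passing to the \emph{reduced} coproduct amounts to discarding every factorization in which some $\Reduit(x_i) = \Unite_0$, which is exactly the side condition $\Reduit(x_i) \neq \Unite_0$ appearing in~\eqref{equ:PRO_vers_AHC_antipode}. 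Finally, applying the iterated product and using $\Sbf_{\Reduit(x_1)} \cdots \Sbf_{\Reduit(x_k)} = \Sbf_{\Reduit(x_1) * \dots * \Reduit(x_k)} = \Sbf_{\Reduit(x_1 * \dots * x_k)}$ collapses each length-$k$ tensor to a single basis element with sign $(-1)^k$, yielding the stated formula with $\ell = k$.

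The main obstacle, and the step requiring genuine care, is the passage from the full iterated coproduct to the reduced one: I must verify that restricting the summation to factorizations with all $\Reduit(x_i) \neq \Unite_0$ precisely reproduces $\mu^{(\ell-1)} \bar{\Delta}^{(\ell-1)}$, i.e.\ that the combinatorics of dropping primitive tensors in $\bar{\Delta}$ matches dropping factors whose reduction is $\Unite_0$. One must check that a factor $x_i$ with $\Reduit(x_i) = \Unite_0$ is exactly a unit $\Unite_{p_i}$ (using that $\Unite_0$ is the only reduced element of null arity and the structure of the reduction map), so that such factors are absorbed into the neighboring ones without changing the product; this is where Lemma~\ref{lem:relation_element_et_son_reduit} and the identity $\Reduit(u * v) = \Reduit(u) * \Reduit(v)$ are invoked. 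Once this identification is clean, the sign bookkeeping and the collapse of tensors are routine.
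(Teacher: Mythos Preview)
Your proposal is correct and follows essentially the same approach as the paper: both recognize the formula as an instance of Takeuchi's formula in a connected graded bialgebra (the paper says so explicitly just before the statement), with the paper phrasing it via the recursive convolution identity $\nu(\Sbf_x) = -\Sbf_x - \sum \nu(\Sbf_{\Reduit(y)})\cdot\Sbf_{\Reduit(z)}$ and induction on degree, while you unfold the iterated reduced coproduct directly. One small remark: your step identifying $\Delta^{(k-1)}(\Sbf_x)$ with the sum over $k$-fold $\circ$-factorizations of $x$ in $\Pca$ needs not just associativity of $\circ$ but also the fact (established inside the proof of Lemma~\ref{lem:PRO_vers_AHC_coassociativite}) that the map $\Rit_y \mapsto \Sbf_{\Reduit(y)}$ is a coalgebra morphism, so that factorizing $\Reduit(y)$ and factorizing $y$ then reducing give the same multiset of tensors.
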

\begin{proof}
    By Proposition \ref{prop:PRO_vers_AHC_graduation}, $\PvH(\Pca)$
    is a combinatorial Hopf algebra, and hence, the antipode $\nu$, which
    is the inverse of the identity morphism for the convolution product,
    exists and is unique. Then, for any reduced element $x$ of $\Pca$
    different from $\Unite_0$,
    \begin{equation}
        \nu(\Sbf_x) = - \Sbf_x -
        \sum_{\substack{y, z \in \Pca \setminus \{x\} \\ y \circ z = x}}
        \nu\left(\Sbf_{\Reduit(y)}\right) \cdot \Sbf_{\Reduit(z)}.
    \end{equation}
    Expression \eqref{equ:PRO_vers_AHC_antipode} for $\nu$ follows now
    by induction on the degree of $x$ in $\Pca$.
\end{proof}
\medskip

We have for instance in $\PvH(\AB)$,
\begin{equation}
    \nu
    \Sbf_{
    \begin{split}\scalebox{.25}{\begin{tikzpicture}[yscale=.65]
        \node[Feuille](S1)at(0,0){};
        \node[Feuille](S2)at(2,0){};
        \node[Operateur](N1)at(1,-2){\begin{math}\La\end{math}};
        \node[Operateur,Marque1](N2)at(4,-4){\begin{math}\Lb\end{math}};
        \node[Operateur](N3)at(1,-6){\begin{math}\La\end{math}};
        \node[Feuille](E1)at(0,-8){};
        \node[Feuille](E2)at(2,-8){};
        \node[Feuille](E3)at(4,-8){};
        \node[Feuille](E4)at(6,-8){};
        \draw[Arete](N1)--(S1);
        \draw[Arete](N1)--(S2);
        \draw[Arete](N3)--(E1);
        \draw[Arete](N3)--(E2);
        \draw[Arete](N2)--(E3);
        \draw[Arete](N2)--(E4);
        \draw[Arete](N1)--(N3);
        \draw[Arete](N1)--(N2);
        \draw[Arete](N2)--(N3);
    \end{tikzpicture}}\end{split}}
    \enspace = \enspace
    - \Sbf_{
    \begin{split}\scalebox{.25}{\begin{tikzpicture}[yscale=.65]
        \node[Feuille](S1)at(0,0){};
        \node[Feuille](S2)at(2,0){};
        \node[Operateur](N1)at(1,-2){\begin{math}\La\end{math}};
        \node[Operateur,Marque1](N2)at(4,-4){\begin{math}\Lb\end{math}};
        \node[Operateur](N3)at(1,-6){\begin{math}\La\end{math}};
        \node[Feuille](E1)at(0,-8){};
        \node[Feuille](E2)at(2,-8){};
        \node[Feuille](E3)at(4,-8){};
        \node[Feuille](E4)at(6,-8){};
        \draw[Arete](N1)--(S1);
        \draw[Arete](N1)--(S2);
        \draw[Arete](N3)--(E1);
        \draw[Arete](N3)--(E2);
        \draw[Arete](N2)--(E3);
        \draw[Arete](N2)--(E4);
        \draw[Arete](N1)--(N3);
        \draw[Arete](N1)--(N2);
        \draw[Arete](N2)--(N3);
    \end{tikzpicture}}\end{split}}
    \enspace + \enspace
    \Sbf_{
    \begin{split}\scalebox{.25}{\begin{tikzpicture}[yscale=.65]
        \node[Feuille](S1)at(0,0){};
        \node[Feuille](S2)at(2,0){};
        \node[Feuille](S3)at(3,0){};
        \node[Feuille](S4)at(6,0){};
        \node[Operateur](N1)at(1,-3){\begin{math}\La\end{math}};
        \node[Operateur,Marque1](N2)at(6,-2){\begin{math}\Lb\end{math}};
        \node[Operateur](N3)at(4,-4){\begin{math}\La\end{math}};
        \node[Feuille](E1)at(0,-6) {};
        \node[Feuille](E2)at(2,-6){};
        \node[Feuille](E3)at(3,-6){};
        \node[Feuille](E4)at(5,-6){};
        \node[Feuille](E5)at(6,-6){};
        \node[Feuille](E6)at(8,-6){};
        \draw[Arete](N1)--(S1);
        \draw[Arete](N1)--(S2);
        \draw[Arete](N1)--(E1);
        \draw[Arete](N1)--(E2);
        \draw[Arete](N3)--(S3);
        \draw[Arete](N2)--(S4);
        \draw[Arete](N2)--(N3);
        \draw[Arete](N3)--(E3);
        \draw[Arete](N3)--(E4);
        \draw[Arete](N2)--(E5);
        \draw[Arete](N2)--(E6);
    \end{tikzpicture}}\end{split}}
    \enspace + \enspace
    \Sbf_{
    \begin{split}\scalebox{.25}{\begin{tikzpicture}[yscale=.65]
        \node[Feuille](S1)at(0,0){};
        \node[Feuille](S2)at(2,0){};
        \node[Feuille](S3)at(5,0){};
        \node[Feuille](S4)at(7,0){};
        \node[Operateur](N1)at(1,-2){\begin{math}\La\end{math}};
        \node[Operateur,Marque1](N2)at(3,-4){\begin{math}\Lb\end{math}};
        \node[Operateur](N3)at(6,-3){\begin{math}\La\end{math}};
        \node[Feuille](E1)at(0,-6){};
        \node[Feuille](E2)at(2,-6){};
        \node[Feuille](E3)at(3,-6){};
        \node[Feuille](E4)at(4,-6){};
        \node[Feuille](E5)at(5,-6){};
        \node[Feuille](E6)at(7,-6){};
        \draw[Arete](N1)--(S1);
        \draw[Arete](N1)--(S2);
        \draw[Arete](N3)--(S3);
        \draw[Arete](N3)--(S4);
        \draw[Arete](N1)--(E1);
        \draw[Arete](N1)--(N2);
        \draw[Arete](N2)--(E2);
        \draw[Arete](N2)--(E3);
        \draw[Arete](N2)--(E4);
        \draw[Arete](N3)--(E5);
        \draw[Arete](N3)--(E6);
    \end{tikzpicture}}\end{split}}
    \enspace - \enspace
    \Sbf_{
    \begin{split}\scalebox{.25}{\begin{tikzpicture}[yscale=.65]
        \node[Feuille](S1)at(0,0){};
        \node[Feuille](S2)at(2,0){};
        \node[Feuille](S3)at(4,0){};
        \node[Feuille](S4)at(6,0){};
        \node[Feuille](S5)at(8,0){};
        \node[Operateur](N1)at(1,-2){\begin{math}\La\end{math}};
        \node[Operateur,Marque1](N2)at(4,-2){\begin{math}\Lb\end{math}};
        \node[Operateur](N3)at(7,-2){\begin{math}\La\end{math}};
        \node[Feuille](E1)at(0,-4){};
        \node[Feuille](E2)at(2,-4){};
        \node[Feuille](E3)at(3,-4){};
        \node[Feuille](E4)at(4,-4){};
        \node[Feuille](E5)at(5,-4){};
        \node[Feuille](E6)at(6,-4){};
        \node[Feuille](E7)at(8,-4){};
        \draw[Arete](N1)--(S1);
        \draw[Arete](N1)--(S2);
        \draw[Arete](N1)--(E1);
        \draw[Arete](N1)--(E2);
        \draw[Arete](N2)--(S3);
        \draw[Arete](N2)--(E3);
        \draw[Arete](N2)--(E4);
        \draw[Arete](N2)--(E5);
        \draw[Arete](N3)--(S4);
        \draw[Arete](N3)--(S5);
        \draw[Arete](N3)--(E6);
        \draw[Arete](N3)--(E7);
    \end{tikzpicture}}\end{split}}\,.
\end{equation}
\medskip

\subsubsection{Duality}
When $\Pca$ admits a grading, let us denote by $\PvH(\Pca)^\star$
the graded dual of $\PvH(\Pca)$. By definition, the adjoint basis of the
fundamental basis of $\PvH(\Pca)$ consists in the elements $\Sbf^\star_x$,
$x \in \Reduit(\Pca)$.
\medskip

\begin{Proposition} \label{prop:PRO_vers_AHC_dual}
    Let $\Pca$ be a free PRO admitting a grading. Then, for any
    reduced elements $x$ and $y$ of $\Pca$, the product and the
    coproduct of $\PvH(\Pca)^\star$ satisfy
    \begin{equation} \label{equ:PRO_vers_AHC_produit_dual}
        \Sbf^\star_x \cdot \Sbf^\star_y =
        \sum_{\substack{x', y' \in \Pca \\
        x' \circ y' \in \Reduit(\Pca) \\
        \Reduit(x') = x, \Reduit(y') = y}}
        \Sbf^\star_{x' \circ y'}
    \end{equation}
    and
    \begin{equation} \label{equ:PRO_vers_AHC_coproduit_dual}
        \Delta\left(\Sbf^\star_x\right) =
        \sum_{\substack{y, z \in \Pca \\ y * z = x}}
        \Sbf^\star_y \otimes \Sbf^\star_z.
    \end{equation}
\end{Proposition}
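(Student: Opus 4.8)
The plan is to exploit the standard duality between the structure maps of a graded bialgebra whose homogeneous components are finite-dimensional: the product of $\PvH(\Pca)^\star$ is the adjoint of the coproduct of $\PvH(\Pca)$, while the coproduct of $\PvH(\Pca)^\star$ is the adjoint of its product. Concretely, for the fundamental basis and its adjoint basis we have the defining pairings
\[
    \langle \Sbf^\star_x \cdot \Sbf^\star_y, \Sbf_w \rangle
    = \langle \Sbf^\star_x \otimes \Sbf^\star_y, \Delta(\Sbf_w) \rangle
    \qquad \text{and} \qquad
    \langle \Delta(\Sbf^\star_w), \Sbf_x \otimes \Sbf_y \rangle
    = \langle \Sbf^\star_w, \Sbf_x \cdot \Sbf_y \rangle .
\]
Both formulas then follow by reading off coefficients, once the relevant structure constants of $\PvH(\Pca)$ have been made explicit. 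The existence of the graded dual is ensured by Proposition \ref{prop:PRO_vers_AHC_graduation}.

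For the coproduct \eqref{equ:PRO_vers_AHC_coproduit_dual}, I would use that the product of $\PvH(\Pca)$ is $\Sbf_x \cdot \Sbf_y = \Sbf_{x * y}$, together with the remark that $x * y$ is again reduced whenever $x$ and $y$ are: the maximal decomposition of $x * y$ is the concatenation of those of $x$ and $y$, so none of its factors equals $\Unite_1$. Hence $\langle \Sbf^\star_w, \Sbf_x \cdot \Sbf_y \rangle$ equals $1$ if $w = x * y$ and $0$ otherwise, and the adjoint relation immediately yields $\Delta(\Sbf^\star_w) = \sum_{x * y = w} \Sbf^\star_x \otimes \Sbf^\star_y$, which is \eqref{equ:PRO_vers_AHC_coproduit_dual} after renaming.

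For the product \eqref{equ:PRO_vers_AHC_produit_dual}, I would substitute the explicit coproduct $\Delta(\Sbf_w) = \sum_{y \circ z = w} \Sbf_{\Reduit(y)} \otimes \Sbf_{\Reduit(z)}$, where the pairs $(y, z)$ range over all, not necessarily reduced, elements of $\Pca$. The coefficient of $\Sbf^\star_w$ in $\Sbf^\star_x \cdot \Sbf^\star_y$ is then $\langle \Sbf^\star_x \otimes \Sbf^\star_y, \Delta(\Sbf_w) \rangle$, that is, the number of pairs $(x', y') \in \Pca \times \Pca$ with $x' \circ y' = w$, $\Reduit(x') = x$, and $\Reduit(y') = y$. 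Collecting these contributions over all reduced $w$ is the same as summing $\Sbf^\star_{x' \circ y'}$ over all pairs $(x', y')$ for which $x' \circ y'$ is reduced and $\Reduit(x') = x$, $\Reduit(y') = y$, which is precisely \eqref{equ:PRO_vers_AHC_produit_dual}.

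The only delicate point, and the main obstacle (though a mild one), is the bookkeeping of the index sets. Since the adjoint basis is indexed only by reduced elements, the product formula must retain the constraint $x' \circ y' \in \Reduit(\Pca)$, which is automatically enforced because $w$ ranges over reduced elements, even though the individual pairs $(x', y')$ need not be reduced. One must also check that each such pair contributes to exactly one basis vector $\Sbf^\star_{x' \circ y'}$, so that rewriting the coefficient count as a single sum over pairs is legitimate; this is immediate since $x' \circ y'$ is a well-defined element of $\Pca$.
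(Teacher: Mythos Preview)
Your proof is correct and follows essentially the same approach as the paper: both use the duality bracket to express the dual product (resp.\ coproduct) as the adjoint of the primal coproduct (resp.\ product), and then read off coefficients from the explicit structure constants of $\PvH(\Pca)$. Your version is slightly more explicit about why $x*y$ is reduced and about the bookkeeping of the index set in \eqref{equ:PRO_vers_AHC_produit_dual}, but these are just elaborations of steps the paper leaves implicit.
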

\begin{proof}
    Let us denote by
    $\langle -, -\rangle : \PvH(\Pca) \otimes \PvH(\Pca)^\star \to \C$
    the duality bracket between $\PvH(\Pca)$ and its graded dual.
    \smallskip

    By duality, we have
    \begin{equation}
        \Sbf^\star_x \cdot \Sbf^\star_y =
        \sum_{z \in \Reduit(\Pca)}
        \left\langle \Delta(\Sbf_z),
        \Sbf^\star_x \otimes \Sbf^\star_y \right\rangle \;
        \Sbf^\star_z.
    \end{equation}
    Expression \eqref{equ:PRO_vers_AHC_produit_dual} follows from the
    fact that for any reduced element $z$ of $\Pca$, $\Sbf_x \otimes \Sbf_y$
    appears in $\Delta(\Sbf_z)$ if and only if there exist $x', y' \in \Pca$
    such that $x' \circ y' = z$, $\Reduit(x') = x$ and $\Reduit(y') = y$.
    \smallskip

    Besides, again by duality, we have
    \begin{equation}
        \Delta(\Sbf^\star_x) =
        \sum_{y, z \in \Reduit(\Pca)}
        \left\langle \Sbf_y \cdot \Sbf_z, \Sbf^\star_x \right\rangle \;
        \Sbf^\star_y \otimes \Sbf^\star_z.
    \end{equation}
    Expression \eqref{equ:PRO_vers_AHC_coproduit_dual} follows from the
    fact that for any reduced elements $y$ and $z$ of $\Pca$, $\Sbf_x$
    appears in $\Sbf_y \cdot \Sbf_z$ if and only if $y * z = x$.
\end{proof}
\medskip

For instance, we have in $\PvH(\AB)$
\begin{multline}
    \Sbf^\star_{
    \begin{split}\scalebox{.25}{\begin{tikzpicture}[yscale=.65]
        \node[Feuille](S1)at(0,0){};
        \node[Feuille](S2)at(2,0){};
        \node[Operateur](N1)at(1,-2){\begin{math}\La\end{math}};
        \node[Feuille](E1)at(0,-4){};
        \node[Feuille](E2)at(2,-4){};
        \draw[Arete](N1)--(S1);
        \draw[Arete](N1)--(S2);
        \draw[Arete](N1)--(E1);
        \draw[Arete](N1)--(E2);
    \end{tikzpicture}}\end{split}}
    \cdot
    \Sbf^\star_{
    \begin{split}\scalebox{.25}{\begin{tikzpicture}[yscale=.65]
        \node[Feuille](S1)at(1,0){};
        \node[Feuille](S2)at(3,0){};
        \node[Feuille](S3)at(5,0){};
        \node[Operateur,Marque1](N1)at(1,-2){\begin{math}\Lb\end{math}};
        \node[Operateur](N2)at(4,-2){\begin{math}\La\end{math}};
        \node[Feuille](E1)at(0,-4){};
        \node[Feuille](E2)at(1,-4){};
        \node[Feuille](E3)at(2,-4){};
        \node[Feuille](E4)at(3,-4){};
        \node[Feuille](E5)at(5,-4){};
        \draw[Arete](N1)--(S1);
        \draw[Arete](N1)--(E1);
        \draw[Arete](N1)--(E2);
        \draw[Arete](N1)--(E3);
        \draw[Arete](N2)--(S2);
        \draw[Arete](N2)--(S3);
        \draw[Arete](N2)--(E4);
        \draw[Arete](N2)--(E5);
    \end{tikzpicture}}\end{split}}
    \enspace = \enspace
    \Sbf^\star_{
    \begin{split}\scalebox{.25}{\begin{tikzpicture}[yscale=.65]
        \node[Feuille](SS1)at(-3,0){};
        \node[Feuille](SS2)at(-1,0){};
        \node[Operateur](NN1)at(-2,-2){\begin{math}\La\end{math}};
        \node[Feuille](EE1)at(-3,-4){};
        \node[Feuille](EE2)at(-1,-4){};
        \draw[Arete](NN1)--(SS1);
        \draw[Arete](NN1)--(SS2);
        \draw[Arete](NN1)--(EE1);
        \draw[Arete](NN1)--(EE2);
        \node[Feuille](S1)at(1,0){};
        \node[Feuille](S2)at(3,0){};
        \node[Feuille](S3)at(5,0){};
        \node[Operateur,Marque1](N1)at(1,-2){\begin{math}\Lb\end{math}};
        \node[Operateur](N2)at(4,-2){\begin{math}\La\end{math}};
        \node[Feuille](E1)at(0,-4){};
        \node[Feuille](E2)at(1,-4){};
        \node[Feuille](E3)at(2,-4){};
        \node[Feuille](E4)at(3,-4){};
        \node[Feuille](E5)at(5,-4){};
        \draw[Arete](N1)--(S1);
        \draw[Arete](N1)--(E1);
        \draw[Arete](N1)--(E2);
        \draw[Arete](N1)--(E3);
        \draw[Arete](N2)--(S2);
        \draw[Arete](N2)--(S3);
        \draw[Arete](N2)--(E4);
        \draw[Arete](N2)--(E5);
    \end{tikzpicture}}\end{split}}
    \enspace + \enspace
    \Sbf^\star_{
    \begin{split}\scalebox{.25}{\begin{tikzpicture}[yscale=.65]
        \node[Operateur](N1)at(1,-2){\begin{math}\La\end{math}};
        \node[Operateur,Marque1](N2)at(2,-5){\begin{math}\Lb\end{math}};
        \node[Operateur](N3)at(5,-3.5){\begin{math}\La\end{math}};
        \node[Feuille](S1)at(0,0){};
        \node[Feuille](S2)at(2,0){};
        \node[Feuille](S3)at(4,0){};
        \node[Feuille](S4)at(6,0){};
        \node[Feuille](E1)at(-1,-7){};
        \node[Feuille](E2)at(1,-7){};
        \node[Feuille](E3)at(2,-7){};
        \node[Feuille](E4)at(3,-7){};
        \node[Feuille](E5)at(4,-7){};
        \node[Feuille](E6)at(6,-7){};
        \draw[Arete](N1)--(N2);
        \draw[Arete](N1)--(S1);
        \draw[Arete](N1)--(S2);
        \draw[Arete](N3)--(S3);
        \draw[Arete](N3)--(S4);
        \draw[Arete](N1)--(E1);
        \draw[Arete](N2)--(E2);
        \draw[Arete](N2)--(E3);
        \draw[Arete](N2)--(E4);
        \draw[Arete](N3)--(E5);
        \draw[Arete](N3)--(E6);
    \end{tikzpicture}}\end{split}}
    \enspace + \enspace
    \Sbf^\star_{
    \begin{split}\scalebox{.25}{\begin{tikzpicture}[yscale=.65]
        \node[Operateur](N1)at(1,-2){\begin{math}\La\end{math}};
        \node[Operateur,Marque1](N2)at(0,-5){\begin{math}\Lb\end{math}};
        \node[Operateur](N3)at(5,-3.5){\begin{math}\La\end{math}};
        \node[Feuille](S1)at(0,0){};
        \node[Feuille](S2)at(2,0){};
        \node[Feuille](S3)at(4,0){};
        \node[Feuille](S4)at(6,0){};
        \node[Feuille](E1)at(-1,-7){};
        \node[Feuille](E2)at(0,-7){};
        \node[Feuille](E3)at(1,-7){};
        \node[Feuille](E4)at(3,-7){};
        \node[Feuille](E5)at(4,-7){};
        \node[Feuille](E6)at(6,-7){};
        \draw[Arete](N1)--(N2);
        \draw[Arete](N1)--(S1);
        \draw[Arete](N1)--(S2);
        \draw[Arete](N3)--(S3);
        \draw[Arete](N3)--(S4);
        \draw[Arete](N2)--(E1);
        \draw[Arete](N2)--(E2);
        \draw[Arete](N2)--(E3);
        \draw[Arete](N1)--(E4);
        \draw[Arete](N3)--(E5);
        \draw[Arete](N3)--(E6);
    \end{tikzpicture}}\end{split}}
    \enspace + \enspace
    \Sbf^\star_{
    \begin{split}\scalebox{.25}{\begin{tikzpicture}[yscale=.65]
        \node[Operateur](N1)at(1,-2){\begin{math}\La\end{math}};
        \node[Operateur,Marque1](N2)at(-.5,-5){\begin{math}\Lb\end{math}};
        \node[Operateur](N3)at(2.5,-5){\begin{math}\La\end{math}};
        \node[Feuille](S1)at(0,0){};
        \node[Feuille](S2)at(2,0){};
        \node[Feuille](S3)at(4,0){};
        \node[Feuille](E1)at(-1.5,-7){};
        \node[Feuille](E2)at(-.5,-7){};
        \node[Feuille](E3)at(.5,-7){};
        \node[Feuille](E4)at(1.5,-7){};
        \node[Feuille](E5)at(3.5,-7){};
        \draw[Arete](N1)--(N2);
        \draw[Arete](N1)--(N3);
        \draw[Arete](N1)--(S1);
        \draw[Arete](N1)--(S2);
        \draw[Arete](N3)--(S3);
        \draw[Arete](N2)--(E1);
        \draw[Arete](N2)--(E2);
        \draw[Arete](N2)--(E3);
        \draw[Arete](N3)--(E4);
        \draw[Arete](N3)--(E5);
    \end{tikzpicture}}\end{split}} \\
    \enspace + \enspace
    \Sbf^\star_{
    \begin{split}\scalebox{.25}{\begin{tikzpicture}[yscale=.65]
        \node[Operateur,Marque1](N1)at(1,-3.5){\begin{math}\Lb\end{math}};
        \node[Operateur](N2)at(4,-2){\begin{math}\La\end{math}};
        \node[Operateur](N3)at(5.5,-5){\begin{math}\La\end{math}};
        \node[Feuille](S1)at(1,0){};
        \node[Feuille](S2)at(3,0){};
        \node[Feuille](S3)at(5,0){};
        \node[Feuille](S4)at(7,0){};
        \node[Feuille](E1)at(0,-7){};
        \node[Feuille](E2)at(1,-7){};
        \node[Feuille](E3)at(2,-7){};
        \node[Feuille](E4)at(3,-7){};
        \node[Feuille](E5)at(4.5,-7){};
        \node[Feuille](E6)at(6.5,-7){};
        \draw[Arete](N2)--(N3);
        \draw[Arete](N1)--(S1);
        \draw[Arete](N2)--(S2);
        \draw[Arete](N2)--(S3);
        \draw[Arete](N3)--(S4);
        \draw[Arete](N1)--(E1);
        \draw[Arete](N1)--(E2);
        \draw[Arete](N1)--(E3);
        \draw[Arete](N2)--(E4);
        \draw[Arete](N3)--(E5);
        \draw[Arete](N3)--(E6);
    \end{tikzpicture}}\end{split}}
    \enspace + \enspace
    \Sbf^\star_{
    \begin{split}\scalebox{.25}{\begin{tikzpicture}[yscale=.65]
        \node[Operateur,Marque1](N1)at(1,-3.5){\begin{math}\Lb\end{math}};
        \node[Operateur](N2)at(4,-2){\begin{math}\La\end{math}};
        \node[Operateur](N3)at(4,-5){\begin{math}\La\end{math}};
        \node[Feuille](S1)at(1,0){};
        \node[Feuille](S2)at(3,0){};
        \node[Feuille](S3)at(5,0){};
        \node[Feuille](E1)at(0,-7){};
        \node[Feuille](E2)at(1,-7){};
        \node[Feuille](E3)at(2,-7){};
        \node[Feuille](E4)at(3,-7){};
        \node[Feuille](E5)at(5,-7){};
        \draw[Arete](N2)edge[bend right] node[]{}(N3);
        \draw[Arete](N2)edge[bend left] node[]{}(N3);
        \draw[Arete](N1)--(S1);
        \draw[Arete](N2)--(S2);
        \draw[Arete](N2)--(S3);
        \draw[Arete](N1)--(E1);
        \draw[Arete](N1)--(E2);
        \draw[Arete](N1)--(E3);
        \draw[Arete](N3)--(E4);
        \draw[Arete](N3)--(E5);
    \end{tikzpicture}}\end{split}}
    \enspace + \enspace
    \Sbf^\star_{
    \begin{split}\scalebox{.25}{\begin{tikzpicture}[yscale=.65]
        \node[Operateur,Marque1](N1)at(1,-3.5){\begin{math}\Lb\end{math}};
        \node[Operateur](N2)at(5.5,-2){\begin{math}\La\end{math}};
        \node[Operateur](N3)at(4,-5){\begin{math}\La\end{math}};
        \node[Feuille](S1)at(1,0){};
        \node[Feuille](S2)at(2.5,0){};
        \node[Feuille](S3)at(4.5,0){};
        \node[Feuille](S4)at(6.5,0){};
        \node[Feuille](E1)at(0,-7){};
        \node[Feuille](E2)at(1,-7){};
        \node[Feuille](E3)at(2,-7){};
        \node[Feuille](E4)at(3,-7){};
        \node[Feuille](E5)at(5,-7){};
        \node[Feuille](E6)at(7,-7){};
        \draw[Arete](N2)--(N3);
        \draw[Arete](N1)--(S1);
        \draw[Arete](N3)--(S2);
        \draw[Arete](N2)--(S3);
        \draw[Arete](N2)--(S4);
        \draw[Arete](N1)--(E1);
        \draw[Arete](N1)--(E2);
        \draw[Arete](N1)--(E3);
        \draw[Arete](N3)--(E4);
        \draw[Arete](N3)--(E5);
        \draw[Arete](N2)--(E6);
    \end{tikzpicture}}\end{split}}
    \enspace + \enspace
    2 \,
    \Sbf^\star_{
    \begin{split}\scalebox{.25}{\begin{tikzpicture}[yscale=.65]
        \node[Feuille](SS1)at(-2,0){};
        \node[Operateur,Marque1](NN1)at(-2,-2){\begin{math}\Lb\end{math}};
        \node[Feuille](EE1)at(-3,-4){};
        \node[Feuille](EE2)at(-2,-4){};
        \node[Feuille](EE3)at(-1,-4){};
        \draw[Arete](NN1)--(SS1);
        \draw[Arete](NN1)--(EE1);
        \draw[Arete](NN1)--(EE2);
        \draw[Arete](NN1)--(EE3);
        \node[Feuille](S1)at(0,0){};
        \node[Feuille](S2)at(2,0){};
        \node[Operateur](N1)at(1,-2){\begin{math}\La\end{math}};
        \node[Feuille](E1)at(0,-4){};
        \node[Feuille](E2)at(2,-4){};
        \draw[Arete](N1)--(S1);
        \draw[Arete](N1)--(S2);
        \draw[Arete](N1)--(E1);
        \draw[Arete](N1)--(E2);
        \node[Feuille](SSS1)at(3,0){};
        \node[Feuille](SSS2)at(5,0){};
        \node[Operateur](N2)at(4,-2){\begin{math}\La\end{math}};
        \node[Feuille](EEE1)at(3,-4){};
        \node[Feuille](EEE2)at(5,-4){};
        \draw[Arete](N2)--(SSS1);
        \draw[Arete](N2)--(SSS2);
        \draw[Arete](N2)--(EEE1);
        \draw[Arete](N2)--(EEE2);
    \end{tikzpicture}}\end{split}}
\end{multline}
and
\begin{multline}
    \Delta
    \Sbf^\star_{
    \begin{split}\scalebox{.25}{\begin{tikzpicture}[yscale=.65]
        \node[Feuille](S1)at(0,0){};
        \node[Feuille](S2)at(2,0){};
        \node[Feuille](S3)at(4.5,0){};
        \node[Feuille](S4)at(8,0){};
        \node[Feuille](S5)at(10,0){};
        \node[Operateur](N1)at(1,-3.5){\begin{math}\La\end{math}};
        \node[Operateur,Marque1](N2)at(4.5,-2){\begin{math}\Lb\end{math}};
        \node[Operateur](N3)at(7,-5){\begin{math}\La\end{math}};
        \node[Operateur,Marque1](N4)at(10,-3.5){\begin{math}\Lb\end{math}};
        \node[Feuille](E1)at(0,-7){};
        \node[Feuille](E2)at(2,-7){};
        \node[Feuille](E3)at(3,-7){};
        \node[Feuille](E4)at(4.5,-7){};
        \node[Feuille](E5)at(6,-7){};
        \node[Feuille](E6)at(8,-7){};
        \node[Feuille](E7)at(9,-7){};
        \node[Feuille](E8)at(10,-7){};
        \node[Feuille](E9)at(11,-7){};
        \draw[Arete](N1)--(S1);
        \draw[Arete](N1)--(S2);
        \draw[Arete](N1)--(E1);
        \draw[Arete](N1)--(E2);
        \draw[Arete](N2)--(S3);
        \draw[Arete](N2)--(E3);
        \draw[Arete](N2)--(E4);
        \draw[Arete](N2)--(N3);
        \draw[Arete](N3)--(S4);
        \draw[Arete](N3)--(E5);
        \draw[Arete](N3)--(E6);
        \draw[Arete](N4)--(S5);
        \draw[Arete](N4)--(E7);
        \draw[Arete](N4)--(E8);
        \draw[Arete](N4)--(E9);
    \end{tikzpicture}}\end{split}}
    \enspace = \enspace
    \Sbf^\star_{\Unite_0} \otimes
    \Sbf^\star_{
    \begin{split}\scalebox{.25}{\begin{tikzpicture}[yscale=.65]
        \node[Feuille](S1)at(0,0){};
        \node[Feuille](S2)at(2,0){};
        \node[Feuille](S3)at(4.5,0){};
        \node[Feuille](S4)at(8,0){};
        \node[Feuille](S5)at(10,0){};
        \node[Operateur](N1)at(1,-3.5){\begin{math}\La\end{math}};
        \node[Operateur,Marque1](N2)at(4.5,-2){\begin{math}\Lb\end{math}};
        \node[Operateur](N3)at(7,-5){\begin{math}\La\end{math}};
        \node[Operateur,Marque1](N4)at(10,-3.5){\begin{math}\Lb\end{math}};
        \node[Feuille](E1)at(0,-7){};
        \node[Feuille](E2)at(2,-7){};
        \node[Feuille](E3)at(3,-7){};
        \node[Feuille](E4)at(4.5,-7){};
        \node[Feuille](E5)at(6,-7){};
        \node[Feuille](E6)at(8,-7){};
        \node[Feuille](E7)at(9,-7){};
        \node[Feuille](E8)at(10,-7){};
        \node[Feuille](E9)at(11,-7){};
        \draw[Arete](N1)--(S1);
        \draw[Arete](N1)--(S2);
        \draw[Arete](N1)--(E1);
        \draw[Arete](N1)--(E2);
        \draw[Arete](N2)--(S3);
        \draw[Arete](N2)--(E3);
        \draw[Arete](N2)--(E4);
        \draw[Arete](N2)--(N3);
        \draw[Arete](N3)--(S4);
        \draw[Arete](N3)--(E5);
        \draw[Arete](N3)--(E6);
        \draw[Arete](N4)--(S5);
        \draw[Arete](N4)--(E7);
        \draw[Arete](N4)--(E8);
        \draw[Arete](N4)--(E9);
    \end{tikzpicture}}\end{split}}
    \enspace + \enspace
    \Sbf^\star_{
    \begin{split}\scalebox{.25}{\begin{tikzpicture}[yscale=.65]
        \node[Feuille](S1)at(0,0){};
        \node[Feuille](S2)at(2,0){};
        \node[Operateur](N1)at(1,-2){\begin{math}\La\end{math}};
        \node[Feuille](E1)at(0,-4){};
        \node[Feuille](E2)at(2,-4){};
        \draw[Arete](N1)--(S1);
        \draw[Arete](N1)--(S2);
        \draw[Arete](N1)--(E1);
        \draw[Arete](N1)--(E2);
    \end{tikzpicture}}\end{split}}
    \otimes
    \Sbf^\star_{
    \begin{split}\scalebox{.25}{\begin{tikzpicture}[yscale=.65]
        \node[Feuille](S3)at(4.5,0){};
        \node[Feuille](S4)at(8,0){};
        \node[Feuille](S5)at(10,0){};
        \node[Operateur,Marque1](N2)at(4.5,-2){\begin{math}\Lb\end{math}};
        \node[Operateur](N3)at(7,-5){\begin{math}\La\end{math}};
        \node[Operateur,Marque1](N4)at(10,-3.5){\begin{math}\Lb\end{math}};
        \node[Feuille](E3)at(3,-7){};
        \node[Feuille](E4)at(4.5,-7){};
        \node[Feuille](E5)at(6,-7){};
        \node[Feuille](E6)at(8,-7){};
        \node[Feuille](E7)at(9,-7){};
        \node[Feuille](E8)at(10,-7){};
        \node[Feuille](E9)at(11,-7){};
        \draw[Arete](N2)--(S3);
        \draw[Arete](N2)--(E3);
        \draw[Arete](N2)--(E4);
        \draw[Arete](N2)--(N3);
        \draw[Arete](N3)--(S4);
        \draw[Arete](N3)--(E5);
        \draw[Arete](N3)--(E6);
        \draw[Arete](N4)--(S5);
        \draw[Arete](N4)--(E7);
        \draw[Arete](N4)--(E8);
        \draw[Arete](N4)--(E9);
    \end{tikzpicture}}\end{split}} \\
    \enspace + \enspace
    \Sbf^\star_{
    \begin{split}\scalebox{.25}{\begin{tikzpicture}[yscale=.65]
        \node[Feuille](S1)at(0,0){};
        \node[Feuille](S2)at(2,0){};
        \node[Feuille](S3)at(4.5,0){};
        \node[Feuille](S4)at(8,0){};
        \node[Operateur](N1)at(1,-3.5){\begin{math}\La\end{math}};
        \node[Operateur,Marque1](N2)at(4.5,-2){\begin{math}\Lb\end{math}};
        \node[Operateur](N3)at(7,-5){\begin{math}\La\end{math}};
        \node[Feuille](E1)at(0,-7){};
        \node[Feuille](E2)at(2,-7){};
        \node[Feuille](E3)at(3,-7){};
        \node[Feuille](E4)at(4.5,-7){};
        \node[Feuille](E5)at(6,-7){};
        \node[Feuille](E6)at(8,-7){};
        \draw[Arete](N1)--(S1);
        \draw[Arete](N1)--(S2);
        \draw[Arete](N1)--(E1);
        \draw[Arete](N1)--(E2);
        \draw[Arete](N2)--(S3);
        \draw[Arete](N2)--(E3);
        \draw[Arete](N2)--(E4);
        \draw[Arete](N2)--(N3);
        \draw[Arete](N3)--(S4);
        \draw[Arete](N3)--(E5);
        \draw[Arete](N3)--(E6);
    \end{tikzpicture}}\end{split}}
    \otimes
    \Sbf^\star_{
    \begin{split}\scalebox{.25}{\begin{tikzpicture}[yscale=.65]
        \node[Feuille](S1)at(1,0){};
        \node[Operateur,Marque1](N1)at(1,-2){\begin{math}\Lb\end{math}};
        \node[Feuille](E1)at(0,-4){};
        \node[Feuille](E2)at(1,-4){};
        \node[Feuille](E3)at(2,-4){};
        \draw[Arete](N1)--(S1);
        \draw[Arete](N1)--(E1);
        \draw[Arete](N1)--(E2);
        \draw[Arete](N1)--(E3);
    \end{tikzpicture}}\end{split}}
    \enspace + \enspace
    \Sbf^\star_{
    \begin{split}\scalebox{.25}{\begin{tikzpicture}[yscale=.65]
        \node[Feuille](S1)at(0,0){};
        \node[Feuille](S2)at(2,0){};
        \node[Feuille](S3)at(4.5,0){};
        \node[Feuille](S4)at(8,0){};
        \node[Feuille](S5)at(10,0){};
        \node[Operateur](N1)at(1,-3.5){\begin{math}\La\end{math}};
        \node[Operateur,Marque1](N2)at(4.5,-2){\begin{math}\Lb\end{math}};
        \node[Operateur](N3)at(7,-5){\begin{math}\La\end{math}};
        \node[Operateur,Marque1](N4)at(10,-3.5){\begin{math}\Lb\end{math}};
        \node[Feuille](E1)at(0,-7){};
        \node[Feuille](E2)at(2,-7){};
        \node[Feuille](E3)at(3,-7){};
        \node[Feuille](E4)at(4.5,-7){};
        \node[Feuille](E5)at(6,-7){};
        \node[Feuille](E6)at(8,-7){};
        \node[Feuille](E7)at(9,-7){};
        \node[Feuille](E8)at(10,-7){};
        \node[Feuille](E9)at(11,-7){};
        \draw[Arete](N1)--(S1);
        \draw[Arete](N1)--(S2);
        \draw[Arete](N1)--(E1);
        \draw[Arete](N1)--(E2);
        \draw[Arete](N2)--(S3);
        \draw[Arete](N2)--(E3);
        \draw[Arete](N2)--(E4);
        \draw[Arete](N2)--(N3);
        \draw[Arete](N3)--(S4);
        \draw[Arete](N3)--(E5);
        \draw[Arete](N3)--(E6);
        \draw[Arete](N4)--(S5);
        \draw[Arete](N4)--(E7);
        \draw[Arete](N4)--(E8);
        \draw[Arete](N4)--(E9);
    \end{tikzpicture}}\end{split}}
    \otimes
    \Sbf^\star_{\Unite_0}.
\end{multline}
\medskip

\subsubsection{Quotient bialgebras}

\begin{Proposition} \label{prop:sous_generateurs_libre_donne_quotient}
    Let $G$ and $G'$ be two bigraded sets such that $G' \subseteq G$.
    Then, the map $\phi : \PvH(\Free(G)) \to \PvH(\Free(G'))$ linearly
    defined, for any reduced element $x$ of $\Free(G)$, by
    \begin{equation}
        \phi(\Sbf_x) :=
        \begin{cases}
            \Sbf_x & \mbox{if } x \in \Free(G'), \\
            0 & \mbox{otherwise},
        \end{cases}
    \end{equation}
    is a surjective bialgebra morphism. Moreover, $\PvH(\Free(G'))$ is
    a quotient bialgebra of $\PvH(\Free(G))$.
\end{Proposition}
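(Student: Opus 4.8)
The plan is to verify directly that $\phi$ respects the product, the coproduct, and the units and counits, to observe that it is surjective by construction, and finally to invoke the fact that the image of a surjective bialgebra morphism is a quotient bialgebra. Throughout, I would rely on the single structural observation that underlies everything: since $G' \subseteq G$, the free PRO $\Free(G')$ is a sub-PRO of $\Free(G)$, and a prograph $x$ over $G$ lies in $\Free(G')$ if and only if every elementary prograph occurring in $x$ is labeled by a generator of $G'$. Because the horizontal composition $u * v$ (resp. the vertical composition $u \circ v$) of two prographs is built by juxtaposing (resp. stacking) $u$ and $v$ without creating or deleting any elementary prograph, the set of generators occurring in $u * v$ and in $u \circ v$ is exactly the union of those occurring in $u$ and in $v$. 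In particular, $u * v \in \Free(G')$ (resp. $u \circ v \in \Free(G')$) if and only if both $u$ and $v$ belong to $\Free(G')$. Moreover, since $\Reduit(u)$ is obtained from $u$ by deleting only wire factors, $u$ and $\Reduit(u)$ involve the same generators, so $u \in \Free(G')$ if and only if $\Reduit(u) \in \Free(G')$. Finally, reducedness of an element does not depend on the ambient generating set, so a reduced element of $\Free(G)$ lying in $\Free(G')$ is also a reduced element of $\Free(G')$, which makes $\phi$ well defined.

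To show that $\phi$ is an algebra morphism, I would argue by cases on the reduced elements $x, y$ of $\Free(G)$ in the identity $\phi(\Sbf_x \cdot \Sbf_y) = \phi(\Sbf_x) \cdot \phi(\Sbf_y)$. Since $\Sbf_x \cdot \Sbf_y = \Sbf_{x * y}$, the observation above gives $x * y \in \Free(G')$ exactly when both $x$ and $y$ are in $\Free(G')$; in that case both sides equal $\Sbf_{x * y}$, and otherwise both sides vanish. Preservation of the unit is immediate since $\Unite_0 \in \Free(G')$, so $\phi(\Sbf_{\Unite_0}) = \Sbf_{\Unite_0}$. The heart of the argument is compatibility with the coproduct, namely $(\phi \otimes \phi) \Delta(\Sbf_x) = \Delta(\phi(\Sbf_x))$, which I would again treat by cases. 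If $x \notin \Free(G')$, then the right-hand side is zero; on the left, a term $\Sbf_{\Reduit(y)} \otimes \Sbf_{\Reduit(z)}$ indexed by $y \circ z = x$ survives $\phi \otimes \phi$ only if $\Reduit(y), \Reduit(z) \in \Free(G')$, that is $y, z \in \Free(G')$, but then $x = y \circ z \in \Free(G')$, a contradiction, so the left-hand side vanishes as well. If $x \in \Free(G')$, then any decomposition $y \circ z = x$ in $\Free(G)$ automatically has $y, z \in \Free(G')$, so the index set of the coproduct computed in $\Free(G)$ coincides with the one computed in $\Free(G')$, and $\phi \otimes \phi$ acts as the identity on every term; hence $(\phi \otimes \phi)\Delta(\Sbf_x)$ equals the coproduct of $\Sbf_x = \phi(\Sbf_x)$ in $\PvH(\Free(G'))$. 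This establishes that $\phi$ is a bialgebra morphism.

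Surjectivity is then immediate: for every reduced element $x$ of $\Free(G')$ we have $x \in \Free(G')$, whence $\phi(\Sbf_x) = \Sbf_x$, so every element of the fundamental basis of $\PvH(\Free(G'))$ lies in the image of $\phi$. To conclude, I would use that the kernel of a bialgebra morphism is a bi-ideal; since $\phi$ is surjective, $\PvH(\Free(G'))$ is isomorphic to $\PvH(\Free(G)) / \ker \phi$ as a bialgebra, which is precisely the assertion that $\PvH(\Free(G'))$ is a quotient bialgebra of $\PvH(\Free(G))$. I expect the main obstacle to be the coproduct case $x \in \Free(G')$: one must check carefully that passing from the index set $\{(y,z) \in \Free(G)^2 : y \circ z = x\}$ to $\{(y,z) \in \Free(G')^2 : y \circ z = x\}$ loses no terms, which is exactly where the generator-preservation property of the vertical composition is essential.
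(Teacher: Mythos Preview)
Your proof is correct and follows essentially the same idea as the paper's, only phrased from the dual perspective: the paper identifies the kernel $V = \Vect(\Sbf_x : x \in \Reduit(\Free(G)) \setminus \Free(G'))$ and asserts it is a bialgebra ideal, whereas you verify directly that $\phi$ is a bialgebra morphism. The underlying computation is the same---that $\Free(G')$ is closed under $*$, $\circ$, and $\Reduit$, and that any prograph over $G$ involving a generator outside $G'$ cannot factor through $\Free(G')$---and your write-up makes this explicit where the paper leaves it implicit.
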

\begin{proof}
    Let $V$ be the linear span of the $\Sbf_x$ where the $x$ are reduced
    elements of $\Free(G) \setminus \Free(G')$. Immediately from the
    definitions of the product and the coproduct of $\PvH(\Free(G))$,
    we observe that $V$ is a bialgebra ideal of $\PvH(\Free(G))$. The
    map $\phi$ is the canonical projection from $\PvH(\Free(G))$
    to $\PvH(\Free(G))/_V \simeq \PvH(\Free(G'))$, whence the result.
\end{proof}
\medskip

\subsection{The Hopf algebra of a stiff PRO}
We now extend the construction $\PvH$ to a class a non-necessarily free
PROs. Still in this section, $\Pca$ is a free PRO.
\medskip

Let $\equiv$ be a congruence of $\Pca$. For any element $x$ of $\Pca$,
we denote by $[x]_\equiv$ (or by $[x]$ if the context is clear) the
$\equiv$-equivalence class of $x$. We say that $\equiv$ is a
{\em stiff congruence} if the following three properties are satisfied:
\begin{enumerate}[label = ({\it C{\arabic*}})]
    \item \label{item:propriete_bonnes_congruences_0}
    for any reduced element $x$ of $\Pca$, the set $[x]$ is finite;
    \item \label{item:propriete_bonnes_congruences_1}
    for any reduced element $x$ of $\Pca$, $[x]$ contains reduced
    elements only;
    \item \label{item:propriete_bonnes_congruences_2}
    for any two elements $x$ and $x'$ of $\Pca$ such that $x \equiv x'$,
    the maximal decompositions of $x$ and $x'$ are, respectively of the
    form $(x_1, \dots, x_\ell)$ and $(x'_1, \dots, x'_\ell)$ for some
    $\ell \geq 0$, and for any $i \in [\ell]$, $x_i \equiv x'_i$.
\end{enumerate}
We say that a PRO is a {\em stiff PRO} if it is the quotient of a free
PRO by a stiff congruence.
\medskip

For any $\equiv$-equivalence class $[x]$ of reduced elements of $\Pca$,
set
\begin{equation} \label{equ:definition_des_T}
    \Tbf_{[x]} := \sum_{x' \in [x]} \Sbf_{x'}.
\end{equation}
Notice that thanks to \ref{item:propriete_bonnes_congruences_0}
and \ref{item:propriete_bonnes_congruences_1}, $\Tbf_{[x]}$ is a
well-defined element of $\PvH(\Pca)$.
\medskip

For instance, if $\Pca$ is the quotient of the free PRO generated by
$G := G(1, 1) \sqcup G(2, 2)$ where $G(1, 1) := \{\La\}$ and
$G(2,2) := \{\Lb\}$ by the finest congruence $\equiv$ satisfying
\begin{equation}
    \begin{split}
    \scalebox{.25}{\begin{tikzpicture}[yscale=.7]
        \node[Feuille](S1)at(0,0){};
        \node[Feuille](S2)at(2,0){};
        \node[Operateur,Marque1](N1)at(1,-2){\begin{math}\Lb\end{math}};
        \node[Operateur](N2)at(0,-5){\begin{math}\La\end{math}};
        \node[Feuille](E1)at(0,-7){};
        \node[Feuille](E2)at(2,-7){};
        \draw[Arete](N1)--(S1);
        \draw[Arete](N1)--(S2);
        \draw[Arete](N1)--(E2);
        \draw[Arete](N2)--(E1);
        \draw[Arete](N1)--(N2);
    \end{tikzpicture}}
    \end{split}
    \quad \equiv \quad
    \begin{split}
    \scalebox{.25}{\begin{tikzpicture}[yscale=.7]
        \node[Feuille](S1)at(0,0){};
        \node[Feuille](S2)at(2,0){};
        \node[Operateur,Marque1](N1)at(1,-2){\begin{math}\Lb\end{math}};
        \node[Operateur,Marque1](N2)at(1,-5){\begin{math}\Lb\end{math}};
        \node[Feuille](E1)at(0,-7){};
        \node[Feuille](E2)at(2,-7){};
        \draw[Arete](N1)--(S1);
        \draw[Arete](N1)--(S2);
        \draw[Arete](N2)--(E1);
        \draw[Arete](N2)--(E2);
        \draw[Arete,out=-110,in=110](N1)edge node[]{}(N2);
        \draw[Arete,out=-70,in=70](N1)edge node[]{}(N2);
    \end{tikzpicture}}
    \end{split}\,,
\end{equation}
one has
\begin{equation}
    \Tbf_{\left[
    \begin{split}\scalebox{.25}{\begin{tikzpicture}[yscale=.7]
        \node[Feuille](S1)at(0,0){};
        \node[Feuille](S2)at(2,0){};
        \node[Operateur,Marque1](N1)at(1,-2){\begin{math}\Lb\end{math}};
        \node[Operateur](N2)at(0,-5){\begin{math}\La\end{math}};
        \node[Operateur,Marque1](N3)at(1,-8){\begin{math}\Lb\end{math}};
        \node[Feuille](E1)at(0,-10){};
        \node[Feuille](E2)at(2,-10){};
        \draw[Arete](N1)--(S1);
        \draw[Arete](N1)--(S2);
        \draw[Arete](N3)--(E2);
        \draw[Arete](N3)--(E1);
        \draw[Arete](N1)--(N2);
        \draw[Arete](N2)--(N3);
        \draw[Arete,out=-70,in=70](N1)edge node[]{}(N3);
    \end{tikzpicture}}\end{split}\right]}
    \enspace = \enspace
    \Sbf_{
    \begin{split}\scalebox{.25}{\begin{tikzpicture}[yscale=.7]
        \node[Feuille](S1)at(0,0){};
        \node[Feuille](S2)at(2,0){};
        \node[Operateur,Marque1](N1)at(1,-2){\begin{math}\Lb\end{math}};
        \node[Operateur](N2)at(0,-5){\begin{math}\La\end{math}};
        \node[Operateur,Marque1](N3)at(1,-8){\begin{math}\Lb\end{math}};
        \node[Feuille](E1)at(0,-10){};
        \node[Feuille](E2)at(2,-10){};
        \draw[Arete](N1)--(S1);
        \draw[Arete](N1)--(S2);
        \draw[Arete](N3)--(E2);
        \draw[Arete](N3)--(E1);
        \draw[Arete](N1)--(N2);
        \draw[Arete](N2)--(N3);
        \draw[Arete,out=-70,in=70](N1)edge node[]{}(N3);
    \end{tikzpicture}}\end{split}}
    \enspace + \enspace
    \Sbf_{
    \begin{split}\scalebox{.25}{\begin{tikzpicture}[yscale=.7]
        \node[Feuille](S1)at(0,0){};
        \node[Feuille](S2)at(2,0){};
        \node[Operateur,Marque1](N1)at(1,-2){\begin{math}\Lb\end{math}};
        \node[Operateur,Marque1](N2)at(1,-5){\begin{math}\Lb\end{math}};
        \node[Operateur,Marque1](N3)at(1,-8){\begin{math}\Lb\end{math}};
        \node[Feuille](E1)at(0,-10){};
        \node[Feuille](E2)at(2,-10){};
        \draw[Arete](N1)--(S1);
        \draw[Arete](N1)--(S2);
        \draw[Arete](N3)--(E1);
        \draw[Arete](N3)--(E2);
        \draw[Arete,out=-110,in=110](N1)edge node[]{}(N2);
        \draw[Arete,out=-70,in=70](N1)edge node[]{}(N2);
        \draw[Arete,out=-110,in=110](N2)edge node[]{}(N3);
        \draw[Arete,out=-70,in=70](N2)edge node[]{}(N3);
    \end{tikzpicture}}\end{split}}
    \enspace + \enspace
    \Sbf_{
    \begin{split}\scalebox{.25}{\begin{tikzpicture}[yscale=.7]
        \node[Feuille](S1)at(0,0){};
        \node[Feuille](S2)at(2,0){};
        \node[Operateur,Marque1](N1)at(1,-2){\begin{math}\Lb\end{math}};
        \node[Operateur,Marque1](N2)at(1,-5){\begin{math}\Lb\end{math}};
        \node[Operateur](N3)at(0,-8){\begin{math}\La\end{math}};
        \node[Feuille](E1)at(0,-10){};
        \node[Feuille](E2)at(2,-10){};
        \draw[Arete](N1)--(S1);
        \draw[Arete](N1)--(S2);
        \draw[Arete](N3)--(E1);
        \draw[Arete](N2)--(E2);
        \draw[Arete,out=-110,in=110](N1)edge node[]{}(N2);
        \draw[Arete,out=-70,in=70](N1)edge node[]{}(N2);
        \draw[Arete](N2)--(N3);
    \end{tikzpicture}}\end{split}}
    \enspace + \enspace
    \Sbf_{
    \begin{split}\scalebox{.25}{\begin{tikzpicture}[yscale=.7]
        \node[Feuille](S1)at(0,0){};
        \node[Feuille](S2)at(2,0){};
        \node[Operateur,Marque1](N1)at(1,-2){\begin{math}\Lb\end{math}};
        \node[Operateur](N2)at(0,-5){\begin{math}\La\end{math}};
        \node[Operateur](N3)at(0,-8){\begin{math}\La\end{math}};
        \node[Feuille](E1)at(0,-10){};
        \node[Feuille](E2)at(2,-10){};
        \draw[Arete](N1)--(S1);
        \draw[Arete](N1)--(S2);
        \draw[Arete](N3)--(E1);
        \draw[Arete](N1)--(N2);
        \draw[Arete](N2)--(N3);
        \draw[Arete](N1)--(E2);
    \end{tikzpicture}}\end{split}}\,.
\end{equation}
Moreover, we can observe that $\equiv$ is a stiff congruence.
\medskip

If $\equiv$ is a stiff congruence of $\Pca$,
\ref{item:propriete_bonnes_congruences_1} and
\ref{item:propriete_bonnes_congruences_2} imply that all the
elements of a same $\equiv$-equivalence class $[x]$ have the same
number of factors and are all reduced or all nonreduced. Then, by
extension, we shall say that a $\equiv$-equivalence class $[x]$
of $\Pca/_\equiv$ is {\em indecomposable} (resp. {\em reduced}) if all
its elements are indecomposable (resp. reduced) in $\Pca$. In the
same way, the {\em wire} of $\Pca/_\equiv$ is the $\equiv$-equivalence
class of the wire of $\Pca$.
\medskip

We shall now study how the product and the coproduct of $\PvH(\Pca)$
behave on the $\Tbf_{[x]}$.
\medskip

\subsubsection{Product}
Let us show that the linear span of the $\Tbf_{[x]}$, where the $[x]$
are $\equiv$-equivalence classes of reduced elements of $\Pca$, forms a
subalgebra of $\PvH(\Pca)$. The product on the $\Tbf_{[x]}$ is
multiplicative and admits the following simple description.
\begin{Proposition} \label{prop:PRO_vers_AHC_congruence_produit}
    Let $\Pca$ be a free PRO and $\equiv$ be a stiff congruence of
    $\Pca$. Then, for any $\equiv$-equivalence classes $[x]$ and $[y]$,
    \begin{equation}
        \Tbf_{[x]} \cdot \Tbf_{[y]} = \Tbf_{[x * y]},
    \end{equation}
    where $x$ (resp. $y$) is any element of $[x]$ (resp. $[y]$).
\end{Proposition}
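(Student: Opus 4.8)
The plan is to expand both sides in the fundamental basis and exhibit a bijection between their index sets. Since the product is bilinear and satisfies $\Sbf_{x'} \cdot \Sbf_{y'} = \Sbf_{x' * y'}$, expanding the definition \eqref{equ:definition_des_T} gives
\[
    \Tbf_{[x]} \cdot \Tbf_{[y]} =
    \sum_{\substack{x' \in [x] \\ y' \in [y]}} \Sbf_{x' * y'}.
\]
Hence it suffices to prove that the map $\mu \colon [x] \times [y] \to [x * y]$ sending $(x', y')$ to $x' * y'$ is a well-defined bijection, for then the right-hand side above becomes $\sum_{z \in [x * y]} \Sbf_z = \Tbf_{[x * y]}$. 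Throughout, property \ref{item:propriete_bonnes_congruences_1} ensures that $[x]$, $[y]$, and $[x * y]$ consist of reduced elements only, and \ref{item:propriete_bonnes_congruences_0} that they are finite, so every symbol written above is a genuine element of $\PvH(\Pca)$.

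First I would verify that $\mu$ is well defined, that is, that $\mu(x', y')$ lies in $[x * y]$. This is immediate from the fact that $\equiv$ is a congruence of PROs, compatible with the horizontal composition: from $x' \equiv x$ and $y' \equiv y$ one gets $x' * y' \equiv x * y$. Moreover $x' * y'$ is reduced whenever $x'$ and $y'$ are, since its maximal decomposition is the concatenation of those of $x'$ and $y'$, none of whose factors equals $\Unite_1$. For injectivity I would combine the freeness of $\Pca$ as a monoid for $*$ (so maximal decompositions are unique) with \ref{item:propriete_bonnes_congruences_2}: every element of $[x]$ has the same number $\ell$ of factors as $x$, and every element of $[y]$ the same number $m$ as $y$. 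Thus if $x_1' * y_1' = x_2' * y_2'$ with $x_i' \in [x]$ and $y_i' \in [y]$, the two maximal decompositions coincide; reading off the first $\ell$ factors gives $x_1' = x_2'$ and the last $m$ factors give $y_1' = y_2'$.

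Surjectivity is where \ref{item:propriete_bonnes_congruences_2} does the real work, and I expect it to be the crux of the argument. Writing the maximal decomposition of $x$ as $(x_1, \dots, x_\ell)$ and that of $y$ as $(y_1, \dots, y_m)$, the maximal decomposition of $x * y$ is $(x_1, \dots, x_\ell, y_1, \dots, y_m)$. Given $z \in [x * y]$, property \ref{item:propriete_bonnes_congruences_2} forces the maximal decomposition of $z$ to be some $(z_1, \dots, z_{\ell + m})$ with $z_i \equiv x_i$ for $i \in [\ell]$ and $z_{\ell + j} \equiv y_j$ for $j \in [m]$. Setting $x' := z_1 * \dots * z_\ell$ and $y' := z_{\ell + 1} * \dots * z_{\ell + m}$ then gives $z = x' * y'$, while compatibility of $\equiv$ with $*$ yields $x' \equiv x$ and $y' \equiv y$; hence $(x', y') \in [x] \times [y]$ is a preimage of $z$ under $\mu$. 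This establishes that $\mu$ is a bijection and completes the proof, the choice of representatives $x \in [x]$ and $y \in [y]$ being irrelevant since the statement only involves the classes themselves.
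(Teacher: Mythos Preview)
Your proof is correct and follows essentially the same approach as the paper: both expand the two sides in the fundamental basis and match the supports by showing the map $(x',y')\mapsto x'*y'$ is a bijection from $[x]\times[y]$ onto $[x*y]$. The only notable difference is in the injectivity step: you invoke \ref{item:propriete_bonnes_congruences_2} to say that all elements of $[x]$ share the same number of factors, whereas the paper uses instead that all elements of a congruence class share the same input and output arities (a consequence of the definition of a PRO congruence) together with freeness of~$\Pca$; both arguments are valid and equally short.
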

\begin{proof}
    We have
    \begin{equation} \label{equ:PRO_vers_AHC_congruence_produit_1}
        \Tbf_{[x]} \cdot \Tbf_{[y]} =
        \sum_{\substack{x' \in [x] \\ y' \in [y]}} \Sbf_{x' * y'}
    \end{equation}
    and
    \begin{equation} \label{equ:PRO_vers_AHC_congruence_produit_2}
        \Tbf_{[x * y]} = \sum_{z \in [x * y]} \Sbf_z.
    \end{equation}
    Let us show that \eqref{equ:PRO_vers_AHC_congruence_produit_1} and
    \eqref{equ:PRO_vers_AHC_congruence_produit_2} are equal. It is enough
    to check that these sums have the same support. Indeed,
    \eqref{equ:PRO_vers_AHC_congruence_produit_2} is by definition
    multiplicity free and \eqref{equ:PRO_vers_AHC_congruence_produit_1}
    is multiplicity free because $\Pca$ is free, and all elements
    of a $\equiv$-equivalence class have the same input arity and the
    same output arity.
    \smallskip

    Assume that there is a reduced element $t$ of $\Pca$ such that
    $\Sbf_t$ appears in \eqref{equ:PRO_vers_AHC_congruence_produit_1}.
    Then, one has $t = x' * y'$ for two reduced elements $x'$ and $y'$
    of $\Pca$ such that $x' \in [x]$ and $y' \in [y]$. Since $\equiv$ is
    a congruence of PROs, we have $[x' * y'] = [x * y]$ and thus,
    $t \in [x * y]$. This shows that $\Sbf_t$ also appears in
    \eqref{equ:PRO_vers_AHC_congruence_produit_2}.
    \smallskip

    Conversely, assume that there is a reduced element $z$ of $\Pca$ such
    that $\Sbf_z$ appears in \eqref{equ:PRO_vers_AHC_congruence_produit_2}.
    Then, one has $z \in [x * y]$. Since $\equiv$ satisfies
    \ref{item:propriete_bonnes_congruences_2}, the maximal decomposition
    of $z$ satisfies $\Dec(z) = (x'_1, \dots, x'_k, y'_1, \dots, y'_\ell)$
    where $\Dec(x) = (x_1, \dots, x_k)$, $\Dec(y) = (y_1, \dots, y_\ell)$,
    $x'_i \equiv x_i$, and $y'_j \equiv y_j$ for all $i \in [k]$ and
    $j \in [\ell]$. Moreover, as $\equiv$ is a congruence of PROs,
    $x' := x'_1 * \dots * x'_k \equiv x$ and
    $y' := y'_1 * \dots * y'_\ell \equiv y$. We then have $z = x' * y'$
    with $x' \in [x]$ and $y' \in [y]$. This shows that $\Sbf_z$ also
    appears in \eqref{equ:PRO_vers_AHC_congruence_produit_1}.
\end{proof}
\medskip

\subsubsection{Coproduct}
To prove that the linear span of the $\Tbf_{[x]}$, where the $[x]$
are $\equiv$-equivalence classes of reduced elements of $\Pca$, forms a
subcoalgebra of $\PvH(\Pca)$ and provides the description of the coproduct
of a $\Tbf_{[x]}$, we need the following notation. For any element $x$
of $\Pca$,
\begin{equation} \label{equ:reduit_classe}
    \Reduit\left([x]\right) :=
    \left\{\Reduit\left(x'\right) : x' \in [x]\right\}.
\end{equation}
\medskip

\begin{Lemme} \label{lem:congruence_rigide_reduit_classe}
    Let $\Pca$ be a free PRO and $\equiv$ be a stiff congruence of
    $\Pca$. For any element $x$ of $\Pca$,
    \begin{equation}
        \Reduit\left([x]\right) = \left[\Reduit(x)\right].
    \end{equation}
\end{Lemme}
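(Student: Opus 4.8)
The plan is to prove the two inclusions $\Reduit([x]) \subseteq [\Reduit(x)]$ and $[\Reduit(x)] \subseteq \Reduit([x])$ separately, noting that both sides are sets of reduced elements of $\Pca$. Throughout I write $r := \Reduit(x)$ and let $(x_1, \dots, x_\ell)$ be the maximal decomposition of $x$, so that $r$ is the horizontal product of exactly those factors $x_i$ that differ from $\Unite_1$.

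For the inclusion $\Reduit([x]) \subseteq [\Reduit(x)]$, I would take any $x' \in [x]$ and compare $\Reduit(x')$ with $r$. By property \ref{item:propriete_bonnes_congruences_2}, the maximal decomposition of $x'$ has the same length $\ell$, say $(x'_1, \dots, x'_\ell)$, with $x_i \equiv x'_i$ for every $i$. The key point is that a factor equals the wire exactly when its partner does: if $x_i = \Unite_1$ but $x'_i \neq \Unite_1$, then $x'_i$ is an indecomposable element different from $\Unite_1$, hence reduced, so by \ref{item:propriete_bonnes_congruences_1} its class $[x'_i]$ consists of reduced elements only, contradicting $\Unite_1 = x_i \in [x'_i]$. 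Thus $x_i = \Unite_1 \iff x'_i = \Unite_1$, so $r$ and $\Reduit(x')$ are horizontal products over the very same set of indices, and the compatibility of $\equiv$ with $*$ gives $\Reduit(x') \equiv r$, that is $\Reduit(x') \in [\Reduit(x)]$.

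For the reverse inclusion, I would start from any $s \equiv r$ and construct a suitable preimage. Since $r$ is reduced, \ref{item:propriete_bonnes_congruences_1} forces $s$ to be reduced, and \ref{item:propriete_bonnes_congruences_2} gives $\Dec(s) = (s_1, \dots, s_m)$ and $\Dec(r) = (r_1, \dots, r_m)$ with $s_j \equiv r_j$ for each $j$. Applying Lemma \ref{lem:relation_element_et_son_reduit} to $x$ and its reduction $r$ yields nonnegative integers $p_1, \dots, p_{m+1}$ with $x = \Unite_{p_1} * r_1 * \dots * r_m * \Unite_{p_{m+1}}$. I then set $x' := \Unite_{p_1} * s_1 * \dots * s_m * \Unite_{p_{m+1}}$; the compatibility of $\equiv$ with $*$ (using $\Unite_{p_i} \equiv \Unite_{p_i}$ together with $s_j \equiv r_j$) gives $x' \equiv x$, so $x' \in [x]$, while deleting the wire factors shows $\Reduit(x') = s_1 * \dots * s_m = s$. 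Hence $s = \Reduit(x') \in \Reduit([x])$.

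The only genuinely delicate step is the wire-matching in the first inclusion. Everything else is a mechanical application of the congruence axioms and Lemma \ref{lem:relation_element_et_son_reduit}, but the equivalence $x_i = \Unite_1 \iff x'_i = \Unite_1$ has to be extracted from \ref{item:propriete_bonnes_congruences_1} combined with the observation that $\Unite_1$ is the unique non-reduced indecomposable element of $\Pca$. I expect this matching to be the heart of the argument.
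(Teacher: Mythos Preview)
Your proof is correct and follows essentially the same route as the paper's: both inclusions are handled exactly as in the original, via the wire-matching argument from \ref{item:propriete_bonnes_congruences_1} and \ref{item:propriete_bonnes_congruences_2} for $\Reduit([x]) \subseteq [\Reduit(x)]$, and via Lemma~\ref{lem:relation_element_et_son_reduit} to reinsert the unit blocks for the reverse inclusion. Your treatment of the wire-matching step is in fact slightly more explicit than the paper's, which simply asserts that \ref{item:propriete_bonnes_congruences_1} forces corresponding factors to be simultaneously wires or simultaneously reduced.
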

\begin{proof}
    Let us denote by $y$ the element $\Reduit(x)$ and let
    $y' \in \Reduit([x])$. Let us show that $y' \in [\Reduit(x)]$. By
    Definition \eqref{equ:reduit_classe}, there is an element $x'$ of
    $\Pca$ such that $x' \in [x]$ and $\Reduit(x') = y'$. Since $\equiv$
    satisfies \ref{item:propriete_bonnes_congruences_2}, $\Dec(x')$ and
    $\Dec(x)$ have the same length $\ell$ and $\Dec(x')_i \equiv \Dec(x)_i$
    for all $i \in [\ell]$. Moreover, since $\equiv$ satisfies
    \ref{item:propriete_bonnes_congruences_1}, for all $i \in [\ell]$,
    $\Dec(x')_i$ and $\Dec(x)_i$ are both reduced elements or are both
    wires. Hence, since $\Dec(y')$ and $\Dec(y)$ are, respectively
    subwords of $\Dec(x')$ and $\Dec(x)$, they have the same length $k$
    and $\Dec(y')_j \equiv \Dec(y)_j$ for all $j \in [k]$. Finally, since
    $\equiv$ is a congruence of PROs, $y' \equiv y$. This shows that
    $y' \in [\Reduit(x)]$ and hence, $\Reduit([x]) \subseteq [\Reduit(x)]$.
    \smallskip

    Again, let us denote by $y$ the element $\Reduit(x)$ and let
    $y' \in [\Reduit(x)]$. Let us show that $y' \in \Reduit([x])$.
    Since $y' \equiv y$ and $\equiv$ satisfies
    \ref{item:propriete_bonnes_congruences_2}, $\Dec(y')$ and $\Dec(y)$
    have the same length $\ell$ and $\Dec(y')_i \equiv \Dec(y)_i$ for all
    $i \in [\ell]$. Moreover, since $y = \Reduit(x)$, by
    Lemma \ref{lem:relation_element_et_son_reduit}, for some
    $p_1, \dots, p_{\ell + 1} \geq 0$, we have
    $x = \Unite_{p_1} * \Dec(y)_1 * \dots *
    \Unite_{p_\ell} * \Dec(y)_\ell * \Unite_{p_{\ell + 1}}$.
    Now, by setting
    $x' := \Unite_{p_1} * \Dec(y')_1 * \dots *
    \Unite_{p_\ell} * \Dec(y')_\ell * \Unite_{p_{\ell + 1}}$, the fact
    that $\equiv$ is a congruence of PROs implies $x' \equiv x$. Since
    $y' = \Reduit(x')$, this shows that $y' \in \Reduit([x])$ and hence,
    $[\Reduit(x)] \subseteq \Reduit([x])$.
\end{proof}
\medskip

\begin{Lemme} \label{lem:congruence_rigide_meme_reduit_meme_element}
    Let $\Pca$ be a free PRO, $\equiv$ be a stiff congruence of $\Pca$,
    and $y$ and $z$ be two elements of $\Pca$ such that $y \equiv z$.
    Then, $\Reduit(y) = \Reduit(z)$ implies $y = z$.
\end{Lemme}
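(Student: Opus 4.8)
The plan is to parametrize $y$ and $z$ by their common reduced form together with the positions at which wires are inserted, and then to show that $y \equiv z$ forces these positions to agree. Set $w := \Reduit(y) = \Reduit(z)$ and let $(w_1, \dots, w_\ell) := \Dec(w)$ be its maximal decomposition. By Lemma \ref{lem:relation_element_et_son_reduit}, there are unique nonnegative integers $p_1, \dots, p_{\ell + 1}$ and $q_1, \dots, q_{\ell + 1}$ with
\[
    y = \Unite_{p_1} * w_1 * \dots * w_\ell * \Unite_{p_{\ell + 1}},
    \qquad
    z = \Unite_{q_1} * w_1 * \dots * w_\ell * \Unite_{q_{\ell + 1}}.
\]
Since $\Pca$ is free, and each $\Unite_{p_i}$ is a horizontal product of wires while each $w_j$ is indecomposable and different from the wire, the maximal decompositions of $y$ and $z$ are read off directly from these expressions. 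Hence it suffices to prove that $p_i = q_i$ for all $i \in [\ell + 1]$, which yields $y = z$. This is precisely the place where the hypothesis $\Reduit(y) = \Reduit(z)$ enters, guaranteeing that $y$ and $z$ share the same non-wire factors $w_1, \dots, w_\ell$.

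The key ingredient, and the step I expect to be the main obstacle, is a rigidity property of the wire under $\equiv$: any factor $u$ (necessarily an indecomposable element) satisfying $u \equiv \Unite_1$ must equal $\Unite_1$. To see this, suppose $u$ were reduced; then \ref{item:propriete_bonnes_congruences_1} applied to $u$ would force every element of $[u]$ to be reduced, contradicting the facts that $\Unite_1 \in [u]$ and that the wire is not reduced. So $u$ is nonreduced, and since it is indecomposable its single factor is itself, so that factor equals $\Unite_1$; hence $u = \Unite_1$.

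Finally I would compare the two maximal decompositions factor by factor. As $y \equiv z$, property \ref{item:propriete_bonnes_congruences_2} ensures that $\Dec(y)$ and $\Dec(z)$ have the same length and that their $k$-th entries are congruent for every $k$. By the rigidity property, the $k$-th entry of $\Dec(y)$ is the wire exactly when the $k$-th entry of $\Dec(z)$ is, so the two decompositions place their wires in the same positions. Now $(p_1, \dots, p_{\ell + 1})$ is precisely the sequence counting the consecutive wires appearing before $w_1$, between successive $w_j$, and after $w_\ell$ in $\Dec(y)$, while $(q_1, \dots, q_{\ell + 1})$ records the same gaps in $\Dec(z)$; the coincidence of wire positions therefore gives $p_i = q_i$ for all $i$, and thus $y = z$. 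Beyond the rigidity property, the argument is routine bookkeeping on maximal decompositions, resting only on the freeness of $\Pca$ and on Lemma \ref{lem:relation_element_et_son_reduit}.
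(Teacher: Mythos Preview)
Your proof is correct and follows essentially the same line as the paper's: both arguments use \ref{item:propriete_bonnes_congruences_2} to align the maximal decompositions of $y$ and $z$ factor by factor, and then use \ref{item:propriete_bonnes_congruences_1} to show that a factor equals $\Unite_1$ on one side if and only if it does on the other. The only cosmetic differences are that the paper argues by contraposition (assuming $y\ne z$ and producing an index where the reduced forms differ) and handles the ``rigidity of the wire'' step inline, whereas you argue directly via Lemma~\ref{lem:relation_element_et_son_reduit} and isolate that step as an explicit mini-lemma.
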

\begin{proof}
    By contraposition, assume that $y \ne z$. Since $y \equiv z$ and
    $\equiv$ satisfies \ref{item:propriete_bonnes_congruences_2},
    $\Dec(y)$ and $\Dec(z)$ have the same length $\ell$, and
    $\Dec(y)_i \equiv \Dec(z)_i$ for all $i \in [\ell]$. Moreover, as
    $y \ne z$, there exists a $j \in [\ell]$ such that $\Dec(y)_j \ne \Dec(z)_j$.
    Now, since $\equiv$ satisfies \ref{item:propriete_bonnes_congruences_1},
    $\Dec(y)_j$ and $\Dec(z)_j$ are both reduced elements. Moreover, for
    all $i \in [\ell]$, $\Dec(y)_i$ and $\Dec(z)_i$ are both reduced
    elements or are both wires. Hence, there is $j' \geq 1$ such that
    $\Dec(\Reduit(y))_{j'} = \Dec(y)_j$ and
    $\Dec(\Reduit(z))_{j'} = \Dec(z)_j$. Since $\Pca$ is free, this
    implies that $\Reduit(y) \ne \Reduit(z)$.
\end{proof}
\medskip

\begin{Proposition} \label{prop:PRO_vers_AHC_congruence_coproduit}
    Let $\Pca$ be a free PRO and $\equiv$ be a stiff congruence of $\Pca$.
    Then, for any $\equiv$-equivalence class $[x]$,
    \begin{equation}
        \Delta\left(\Tbf_{[x]}\right) =
        \sum_{\substack{[y], [z] \in \Pca/_\equiv \\
            [y] \circ [z] = [x]}}
            \Tbf_{\Reduit([y])} \otimes \Tbf_{\Reduit([z])}.
    \end{equation}
\end{Proposition}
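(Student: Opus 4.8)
The plan is to expand both sides in the fundamental basis $\{\Sbf_a\}$ and to check that every tensor $\Sbf_a \otimes \Sbf_b$, with $a$ and $b$ reduced, occurs with the same coefficient on each side. First I would rewrite the left-hand side: using the definition \eqref{equ:definition_des_T} of $\Tbf_{[x]}$ (legitimate by \ref{item:propriete_bonnes_congruences_0} and \ref{item:propriete_bonnes_congruences_1}) together with the definition of the coproduct of $\PvH(\Pca)$, one obtains
\begin{equation}
    \Delta\left(\Tbf_{[x]}\right) =
    \sum_{\substack{y, z \in \Pca \\ y \circ z \in [x]}}
    \Sbf_{\Reduit(y)} \otimes \Sbf_{\Reduit(z)},
\end{equation}
so that the coefficient of $\Sbf_a \otimes \Sbf_b$ equals the number of pairs $(y, z)$ of $\Pca$ with $y \circ z \in [x]$, $\Reduit(y) = a$, and $\Reduit(z) = b$.

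Next I would expand the right-hand side. By Lemma \ref{lem:congruence_rigide_reduit_classe} one has $\Reduit([y]) = [\Reduit(y)]$, hence $\Tbf_{\Reduit([y])} = \sum_{w \in [\Reduit(y)]} \Sbf_w$, and similarly for $[z]$. Therefore the coefficient of $\Sbf_a \otimes \Sbf_b$ on the right-hand side is the number of pairs of classes $([y], [z])$ of $\Pca/_\equiv$ satisfying $[y] \circ [z] = [x]$, $\Reduit([y]) = [a]$, and $\Reduit([z]) = [b]$.

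The core of the argument is then to build a bijection between these two sets of data through the map $(y, z) \mapsto ([y], [z])$. A pair $(y, z)$ counted on the left is sent to a pair counted on the right, since $[y] \circ [z] = [y \circ z] = [x]$ and, again by Lemma \ref{lem:congruence_rigide_reduit_classe}, $\Reduit([y]) = [\Reduit(y)] = [a]$ and likewise $\Reduit([z]) = [b]$. Injectivity is where Lemma \ref{lem:congruence_rigide_meme_reduit_meme_element} enters: if $(y_1, z_1)$ and $(y_2, z_2)$ share the same image, then $y_1 \equiv y_2$ with $\Reduit(y_1) = \Reduit(y_2) = a$, which forces $y_1 = y_2$, and similarly $z_1 = z_2$.

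The step I expect to be the main obstacle is surjectivity, since it requires producing honest representatives whose vertical composition is both well-defined and lands in $[x]$. Given a pair $([y], [z])$ counted on the right, Lemma \ref{lem:congruence_rigide_reduit_classe} furnishes representatives $y' \in [y]$ and $z' \in [z]$ with $\Reduit(y') = a$ and $\Reduit(z') = b$. The point to verify carefully is that $y' \circ z'$ is defined: because $[y] \circ [z] = [x]$ makes sense in $\Pca/_\equiv$ one has $\In([y]) = \Out([z])$, and as $\equiv$ preserves input and output arities this gives $\In(y') = \Out(z')$. The congruence property then yields $[y' \circ z'] = [y] \circ [z] = [x]$, so $(y', z')$ is a genuine preimage. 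Once the bijection is established, the coefficients of every $\Sbf_a \otimes \Sbf_b$ coincide, which proves the claimed identity.
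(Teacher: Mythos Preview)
Your proof is correct and follows essentially the same route as the paper's own argument. The paper presents the computation as a chain of equalities, first replacing the condition $y \circ z \in [x]$ by $[y] \circ [z] = [x]$ (congruence), then grouping by classes, then using Lemma~\ref{lem:congruence_rigide_meme_reduit_meme_element} to turn the inner sum over $y' \in [y]$ into a sum over $\Reduit([y])$, and finally invoking Lemma~\ref{lem:congruence_rigide_reduit_classe} to recognise $\Tbf_{\Reduit([y])}$; your coefficient-wise bijection $(y,z) \mapsto ([y],[z])$ is exactly this chain read backwards, with the two lemmas playing the same roles (injectivity from Lemma~\ref{lem:congruence_rigide_meme_reduit_meme_element}, surjectivity and well-definedness from Lemma~\ref{lem:congruence_rigide_reduit_classe}).
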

\begin{proof}
    We have
    \begin{align}
        \Delta\left(\Tbf_{[x]}\right)
        & = \sum_{\substack{y, z \in \Pca \\ y \circ z \in [x]}}
            \Sbf_{\Reduit(y)} \otimes \Sbf_{\Reduit(z)}
                \label{equ:coproduit_congruence_1} \\
        & = \sum_{\substack{y, z \in \Pca \\ [y] \circ [z] = [x]}}
            \Sbf_{\Reduit(y)} \otimes \Sbf_{\Reduit(z)}
                \label{equ:coproduit_congruence_2} \\
        & = \sum_{\substack{[y], [z] \in \Pca/_\equiv \\
                [y] \circ [z] = [x]}} \;
            \sum_{\substack{y' \in [y] \\ z' \in [z]}}
            \Sbf_{\Reduit(y')} \otimes \Sbf_{\Reduit(z')}
                \label{equ:coproduit_congruence_3} \\
        & = \sum_{\substack{[y], [z] \in \Pca/_\equiv \\
                [y] \circ [z] = [x]}} \;
            \sum_{\substack{y' \in \Reduit([y]) \\ z' \in \Reduit([z])}}
            \Sbf_{y'} \otimes \Sbf_{z'}
                \label{equ:coproduit_congruence_4} \\
        & = \sum_{\substack{[y], [z] \in \Pca/_\equiv \\
            [y] \circ [z] = [x]}}
            \Tbf_{\Reduit([y])} \otimes \Tbf_{\Reduit([z])}.
                \label{equ:coproduit_congruence_5}
    \end{align}
    Let us comment the non-obvious equalities appearing in this computation.
    The equality between \eqref{equ:coproduit_congruence_1} and
    \eqref{equ:coproduit_congruence_2} comes from the fact that $\equiv$
    is a congruence of PROs. The equality between
    \eqref{equ:coproduit_congruence_3} and
    \eqref{equ:coproduit_congruence_4} is a consequence of
    Lemma \ref{lem:congruence_rigide_meme_reduit_meme_element}. Finally,
    \eqref{equ:coproduit_congruence_4} is, thanks to
    Lemma \ref{lem:congruence_rigide_reduit_classe}, equal to
    \eqref{equ:coproduit_congruence_5}.
\end{proof}
\medskip

\subsubsection{Sub-bialgebra}
The description of the product and the coproduct on the $\Tbf_{[x]}$
leads to the following result.
\begin{Theoreme} \label{thm:PRO_vers_AHC_congruence}
    Let $\Pca$ be a free PRO and $\equiv$ be a stiff congruence of $\Pca$.
    Then, the linear span of the $\Tbf_{[x]}$, where the $[x]$ are
    $\equiv$-equivalence classes of reduced elements of $\Pca$, forms
    a sub-bialgebra of $\PvH(\Pca)$.
\end{Theoreme}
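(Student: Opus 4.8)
The plan is to assemble the statement directly from the two structural results already established, namely Proposition~\ref{prop:PRO_vers_AHC_congruence_produit} for the product and Proposition~\ref{prop:PRO_vers_AHC_congruence_coproduit} for the coproduct. Write $V$ for the linear span of the $\Tbf_{[x]}$, where $[x]$ ranges over the $\equiv$-equivalence classes of reduced elements of $\Pca$. First I would observe that these elements are linearly independent: by~\eqref{equ:definition_des_T} the element $\Tbf_{[x]}$ is the sum of the $\Sbf_{x'}$ over $x' \in [x]$, and distinct classes are disjoint, so the $\Tbf_{[x]}$ have pairwise disjoint supports in the fundamental basis of $\PvH(\Pca)$. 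Hence the $\Tbf_{[x]}$ form a basis of $V$, and it suffices to check that $V$ is stable under the product and the coproduct, contains the unit, and is respected by the counit.

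For the product, Proposition~\ref{prop:PRO_vers_AHC_congruence_produit} gives $\Tbf_{[x]} \cdot \Tbf_{[y]} = \Tbf_{[x * y]}$ for any two classes $[x]$ and $[y]$. The key remark is that whenever $x$ and $y$ are reduced, so is $x * y$: its factors are exactly those of $x$ followed by those of $y$, none of which is the wire. Consequently $[x * y]$ is again a class of reduced elements and $\Tbf_{[x * y]}$ lies in $V$. Moreover the unit $\Sbf_{\Unite_0} = \Tbf_{[\Unite_0]}$ belongs to $V$, so $V$ is a subalgebra of $\PvH(\Pca)$.

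For the coproduct, Proposition~\ref{prop:PRO_vers_AHC_congruence_coproduit} expresses $\Delta(\Tbf_{[x]})$ as a sum of tensors $\Tbf_{\Reduit([y])} \otimes \Tbf_{\Reduit([z])}$ over pairs of classes with $[y] \circ [z] = [x]$. Here I would invoke Lemma~\ref{lem:congruence_rigide_reduit_classe}, which identifies $\Reduit([y])$ with $[\Reduit(y)]$, an $\equiv$-equivalence class; by property~\ref{item:propriete_bonnes_congruences_1} this class consists of reduced elements only. Thus each factor appearing in $\Delta(\Tbf_{[x]})$ is of the form $\Tbf_{[w]}$ for some class $[w]$ of reduced elements, whence $\Delta(V) \subseteq V \otimes V$. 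Together with the fact that the counit of $\PvH(\Pca)$ restricts to $V$, sending $\Tbf_{[\Unite_0]} = \Sbf_{\Unite_0}$ to $1$ and vanishing on every other $\Tbf_{[x]}$, this shows that $V$ is a sub-bialgebra of $\PvH(\Pca)$.

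Since all the genuine work has been front-loaded into the two propositions and into Lemma~\ref{lem:congruence_rigide_reduit_classe}, I do not anticipate any real obstacle. The only point requiring attention is the recognition, via that lemma, that the reduced classes $\Reduit([y])$ and $\Reduit([z])$ appearing in the coproduct are themselves $\equiv$-equivalence classes of reduced elements, so that $\Delta$ genuinely lands inside $V \otimes V$ rather than merely in $\PvH(\Pca) \otimes \PvH(\Pca)$.
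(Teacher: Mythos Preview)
Your proposal is correct and follows essentially the same approach as the paper: invoke Propositions~\ref{prop:PRO_vers_AHC_congruence_produit} and~\ref{prop:PRO_vers_AHC_congruence_coproduit} to see that the product and coproduct restrict to $V$, and note that the $\Tbf_{[x]}$ are sums of fundamental basis elements. Your write-up is simply more explicit than the paper's two-sentence proof, in particular in spelling out linear independence, the closure under~$*$ of reduced elements, and the role of Lemma~\ref{lem:congruence_rigide_reduit_classe} in ensuring $\Reduit([y])$ is genuinely a class of reduced elements.
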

\begin{proof}
    By Propositions \ref{prop:PRO_vers_AHC_congruence_produit} and
    \ref{prop:PRO_vers_AHC_congruence_coproduit}, the product and the
    coproduct of $\PvH(\Pca)$ are still well-defined on the $\Tbf_{[x]}$.
    Then, since the $\Tbf_{[x]}$ are by \eqref{equ:definition_des_T}
    sums of some $\Sbf_{x'}$, this implies the statement of the theorem.
\end{proof}
\medskip

We shall denote, by a slight abuse of notation, by $\PvH(\Pca/_\equiv)$
the sub-bialgebra of $\PvH(\Pca)$ spanned by the $\Tbf_{[x]}$, where the
$[x]$ are $\equiv$-equivalence classes of reduced elements of $\Pca$.
Notice that the construction $\PvH$ as it was presented in
Section~\ref{subsec:PRO_libre_vers_AHC} is a special case of this latter
when $\equiv$ is the most refined congruence of PROs.
\medskip

Note that this construction of sub-bialgebras of $\PvH(\Pca)$ by taking
an equivalence relation satisfying some precise properties and by
considering the elements obtained by summing over its equivalence classes
is analog to the construction of certain sub-bialgebras of the
Malvenuto-Reutenauer Hopf algebra \cite{MR95}. Indeed, some famous
Hopf algebras are obtained in this way, as the Loday-Ronco Hopf algebra
\cite{LR98} by using the sylvester monoid congruence \cite{HNT05}, or
the Poirier-Reutenauer Hopf algebra \cite{PR95} by using the plactic
monoid congruence \cite{DHT02, HNT05}.
\medskip

\subsubsection{The importance of the stiff congruence condition}
Let us now explain why the stiff congruence condition required as a
premise of Theorem \ref{thm:PRO_vers_AHC_congruence} is important by
providing an example of a non-stiff congruence of PROs failing to
produce a bialgebra.
\medskip

Consider the PRO $\Pca$ quotient of the free PRO generated by
$G := G(1, 1) \sqcup G(2, 2)$ where $G(1, 1) := \{\La\}$ and
$G(2, 2) := \{\Lb\}$ by the finest congruence $\equiv$ satisfying
\begin{equation}
    \begin{split}
    \scalebox{.25}{\begin{tikzpicture}[yscale=.8]
        \node[Feuille](S1)at(0,0){};
        \node[Feuille](S2)at(2,0){};
        \node[Operateur](N1)at(0,-2){\begin{math}\La\end{math}};
        \node[Operateur](N2)at(2,-2){\begin{math}\La\end{math}};
        \node[Feuille](E1)at(0,-4){};
        \node[Feuille](E2)at(2,-4){};
        \draw[Arete](N1)--(S1);
        \draw[Arete](N2)--(S2);
        \draw[Arete](N1)--(E1);
        \draw[Arete](N2)--(E2);
    \end{tikzpicture}}
    \end{split}
    \begin{split}\quad \equiv \quad\end{split}
    \begin{split}
    \scalebox{.25}{\begin{tikzpicture}[yscale=.8]
        \node[Feuille](S1)at(0,0){};
        \node[Feuille](S2)at(2,0){};
        \node[Operateur,Marque1](N1)at(1,-2){\begin{math}\Lb\end{math}};
        \node[Feuille](E1)at(0,-4){};
        \node[Feuille](E2)at(2,-4){};
        \draw[Arete](N1)--(S1);
        \draw[Arete](N1)--(S2);
        \draw[Arete](N1)--(E1);
        \draw[Arete](N1)--(E2);
    \end{tikzpicture}}
    \end{split}
\end{equation}
Here, $\equiv$ is not a stiff congruence since it satisfies
\ref{item:propriete_bonnes_congruences_1} but not
\ref{item:propriete_bonnes_congruences_2}.
\medskip

We have
\begin{equation}
    \Tbf_{
    \left[
    \begin{split}
    \scalebox{.25}{\begin{tikzpicture}[yscale=.8]
        \node[Feuille](S1)at(0,0){};
        \node[Operateur](N1)at(0,-2){\begin{math}\La\end{math}};
        \node[Feuille](E1)at(0,-4){};
        \draw[Arete](N1)--(S1);
        \draw[Arete](N1)--(E1);
    \end{tikzpicture}}\end{split}
    \right]}
    \cdot
    \Tbf_{
    \left[
    \begin{split}
    \scalebox{.25}{\begin{tikzpicture}[yscale=.8]
        \node[Feuille](S1)at(0,0){};
        \node[Operateur](N1)at(0,-2){\begin{math}\La\end{math}};
        \node[Feuille](E1)at(0,-4){};
        \draw[Arete](N1)--(S1);
        \draw[Arete](N1)--(E1);
    \end{tikzpicture}}\end{split}
    \right]}
    \enspace = \enspace
    \Sbf_{
    \begin{split}
    \scalebox{.25}{\begin{tikzpicture}[yscale=.8]
        \node[Feuille](S1)at(0,0){};
        \node[Operateur](N1)at(0,-2){\begin{math}\La\end{math}};
        \node[Feuille](E1)at(0,-4){};
        \draw[Arete](N1)--(S1);
        \draw[Arete](N1)--(E1);
    \end{tikzpicture}}\end{split}}
    \cdot
    \Sbf_{
    \begin{split}
    \scalebox{.25}{\begin{tikzpicture}[yscale=.8]
        \node[Feuille](S1)at(0,0){};
        \node[Operateur](N1)at(0,-2){\begin{math}\La\end{math}};
        \node[Feuille](E1)at(0,-4){};
        \draw[Arete](N1)--(S1);
        \draw[Arete](N1)--(E1);
    \end{tikzpicture}}\end{split}}
    \enspace = \enspace
    \Sbf_{
    \begin{split}
    \scalebox{.25}{\begin{tikzpicture}[yscale=.8]
        \node[Feuille](S1)at(0,0){};
        \node[Feuille](S2)at(2,0){};
        \node[Operateur](N1)at(0,-2){\begin{math}\La\end{math}};
        \node[Operateur](N2)at(2,-2){\begin{math}\La\end{math}};
        \node[Feuille](E1)at(0,-4){};
        \node[Feuille](E2)at(2,-4){};
        \draw[Arete](N1)--(S1);
        \draw[Arete](N2)--(S2);
        \draw[Arete](N1)--(E1);
        \draw[Arete](N2)--(E2);
    \end{tikzpicture}}\end{split}}
\end{equation}
but this last element cannot be expressed on the $\Tbf_{[x]}$.
\medskip

Besides, by a straightforward computation, we have
\begin{multline}
    \Delta \Tbf_{\left[
    \begin{split}
    \scalebox{.25}{\begin{tikzpicture}[yscale=.8]
        \node[Feuille](S1)at(0,0){};
        \node[Feuille](S2)at(2,0){};
        \node[Feuille](S3)at(4,0){};
        \node[Operateur](N1)at(0,-2){\begin{math}\La\end{math}};
        \node[Operateur](N2)at(2,-2){\begin{math}\La\end{math}};
        \node[Operateur](N3)at(4,-2){\begin{math}\La\end{math}};
        \node[Feuille](E1)at(0,-4){};
        \node[Feuille](E2)at(2,-4){};
        \node[Feuille](E3)at(4,-4){};
        \draw[Arete](N1)--(S1);
        \draw[Arete](N2)--(S2);
        \draw[Arete](N3)--(S3);
        \draw[Arete](N1)--(E1);
        \draw[Arete](N2)--(E2);
        \draw[Arete](N3)--(E3);
    \end{tikzpicture}}\end{split}\right]}
    \enspace = \enspace
    \Delta \Sbf_{
    \begin{split}
    \scalebox{.25}{\begin{tikzpicture}[yscale=.8]
        \node[Feuille](S1)at(0,0){};
        \node[Feuille](S2)at(2,0){};
        \node[Feuille](S3)at(4,0){};
        \node[Operateur](N1)at(0,-2){\begin{math}\La\end{math}};
        \node[Operateur](N2)at(2,-2){\begin{math}\La\end{math}};
        \node[Operateur](N3)at(4,-2){\begin{math}\La\end{math}};
        \node[Feuille](E1)at(0,-4){};
        \node[Feuille](E2)at(2,-4){};
        \node[Feuille](E3)at(4,-4){};
        \draw[Arete](N1)--(S1);
        \draw[Arete](N2)--(S2);
        \draw[Arete](N3)--(S3);
        \draw[Arete](N1)--(E1);
        \draw[Arete](N2)--(E2);
        \draw[Arete](N3)--(E3);
    \end{tikzpicture}}\end{split}}
    \enspace + \enspace
    \Delta \Sbf_{
    \begin{split}
    \scalebox{.25}{\begin{tikzpicture}[yscale=.8]
        \node[Feuille](S1)at(0,0){};
        \node[Feuille](S2)at(2,0){};
        \node[Feuille](S3)at(4,0){};
        \node[Operateur](N1)at(0,-2){\begin{math}\La\end{math}};
        \node[Operateur,Marque1](N2)at(3,-2){\begin{math}\Lb\end{math}};
        \node[Feuille](E1)at(0,-4){};
        \node[Feuille](E2)at(2,-4){};
        \node[Feuille](E3)at(4,-4){};
        \draw[Arete](N1)--(S1);
        \draw[Arete](N2)--(S2);
        \draw[Arete](N2)--(S3);
        \draw[Arete](N1)--(E1);
        \draw[Arete](N2)--(E2);
        \draw[Arete](N2)--(E3);
    \end{tikzpicture}}\end{split}}
    \enspace + \enspace
    \Delta \Sbf_{
    \begin{split}
    \scalebox{.25}{\begin{tikzpicture}[yscale=.8]
        \node[Feuille](S1)at(-1,0){};
        \node[Feuille](S2)at(1,0){};
        \node[Feuille](S3)at(3,0){};
        \node[Operateur,Marque1](N1)at(0,-2){\begin{math}\Lb\end{math}};
        \node[Operateur](N2)at(3,-2){\begin{math}\La\end{math}};
        \node[Feuille](E1)at(-1,-4){};
        \node[Feuille](E2)at(1,-4){};
        \node[Feuille](E3)at(3,-4){};
        \draw[Arete](N1)--(S1);
        \draw[Arete](N1)--(S2);
        \draw[Arete](N2)--(S3);
        \draw[Arete](N1)--(E1);
        \draw[Arete](N1)--(E2);
        \draw[Arete](N2)--(E3);
    \end{tikzpicture}}\end{split}} \\
    \enspace = \enspace
    \Tbf_{\left[\Unite_0\right]}
    \otimes
    \Tbf_{\left[
    \begin{split}
    \scalebox{.25}{\begin{tikzpicture}[yscale=.8]
        \node[Feuille](S1)at(0,0){};
        \node[Feuille](S2)at(2,0){};
        \node[Feuille](S3)at(4,0){};
        \node[Operateur](N1)at(0,-2){\begin{math}\La\end{math}};
        \node[Operateur](N2)at(2,-2){\begin{math}\La\end{math}};
        \node[Operateur](N3)at(4,-2){\begin{math}\La\end{math}};
        \node[Feuille](E1)at(0,-4){};
        \node[Feuille](E2)at(2,-4){};
        \node[Feuille](E3)at(4,-4){};
        \draw[Arete](N1)--(S1);
        \draw[Arete](N2)--(S2);
        \draw[Arete](N3)--(S3);
        \draw[Arete](N1)--(E1);
        \draw[Arete](N2)--(E2);
        \draw[Arete](N3)--(E3);
    \end{tikzpicture}}\end{split}\right]}
    \enspace + \enspace
    \Tbf_{\left[
    \begin{split}
    \scalebox{.25}{\begin{tikzpicture}[yscale=.8]
        \node[Feuille](S1)at(0,0){};
        \node[Feuille](S2)at(2,0){};
        \node[Feuille](S3)at(4,0){};
        \node[Operateur](N1)at(0,-2){\begin{math}\La\end{math}};
        \node[Operateur](N2)at(2,-2){\begin{math}\La\end{math}};
        \node[Operateur](N3)at(4,-2){\begin{math}\La\end{math}};
        \node[Feuille](E1)at(0,-4){};
        \node[Feuille](E2)at(2,-4){};
        \node[Feuille](E3)at(4,-4){};
        \draw[Arete](N1)--(S1);
        \draw[Arete](N2)--(S2);
        \draw[Arete](N3)--(S3);
        \draw[Arete](N1)--(E1);
        \draw[Arete](N2)--(E2);
        \draw[Arete](N3)--(E3);
    \end{tikzpicture}}\end{split}\right]}
    \otimes
    \Tbf_{\left[\Unite_0\right]} \\
    \enspace + \enspace
    2\,
    \Tbf_{\left[
    \begin{split}
    \scalebox{.25}{\begin{tikzpicture}[yscale=.8]
        \node[Feuille](S1)at(0,0){};
        \node[Operateur](N1)at(0,-2){\begin{math}\La\end{math}};
        \node[Feuille](E1)at(0,-4){};
        \draw[Arete](N1)--(S1);
        \draw[Arete](N1)--(E1);
    \end{tikzpicture}}\end{split}\right]}
    \otimes
    \Tbf_{\left[
    \begin{split}
    \scalebox{.25}{\begin{tikzpicture}[yscale=.8]
        \node[Feuille](S1)at(2,0){};
        \node[Feuille](S2)at(4,0){};
        \node[Operateur,Marque1](N1)at(3,-2){\begin{math}\Lb\end{math}};
        \node[Feuille](E1)at(2,-4){};
        \node[Feuille](E2)at(4,-4){};
        \draw[Arete](N1)--(S1);
        \draw[Arete](N1)--(S2);
        \draw[Arete](N1)--(E1);
        \draw[Arete](N1)--(E2);
    \end{tikzpicture}}\end{split}\right]}
    \enspace + \enspace
    2\,
    \Tbf_{\left[
    \begin{split}
    \scalebox{.25}{\begin{tikzpicture}[yscale=.8]
        \node[Feuille](S1)at(2,0){};
        \node[Feuille](S2)at(4,0){};
        \node[Operateur,Marque1](N1)at(3,-2){\begin{math}\Lb\end{math}};
        \node[Feuille](E1)at(2,-4){};
        \node[Feuille](E2)at(4,-4){};
        \draw[Arete](N1)--(S1);
        \draw[Arete](N1)--(S2);
        \draw[Arete](N1)--(E1);
        \draw[Arete](N1)--(E2);
    \end{tikzpicture}}\end{split}\right]}
    \otimes
    \Tbf_{\left[
    \begin{split}
    \scalebox{.25}{\begin{tikzpicture}[yscale=.8]
        \node[Feuille](S1)at(0,0){};
        \node[Operateur](N1)at(0,-2){\begin{math}\La\end{math}};
        \node[Feuille](E1)at(0,-4){};
        \draw[Arete](N1)--(S1);
        \draw[Arete](N1)--(E1);
    \end{tikzpicture}}\end{split}\right]} \\
    \enspace + \enspace
    \Sbf_{
    \begin{split}
    \scalebox{.25}{\begin{tikzpicture}[yscale=.8]
        \node[Feuille](S1)at(0,0){};
        \node[Operateur](N1)at(0,-2){\begin{math}\La\end{math}};
        \node[Feuille](E1)at(0,-4){};
        \draw[Arete](N1)--(S1);
        \draw[Arete](N1)--(E1);
    \end{tikzpicture}}\end{split}}
    \otimes
    \Sbf_{
    \begin{split}
    \scalebox{.25}{\begin{tikzpicture}[yscale=.8]
        \node[Feuille](S1)at(0,0){};
        \node[Feuille](S2)at(2,0){};
        \node[Operateur](N1)at(0,-2){\begin{math}\La\end{math}};
        \node[Operateur](N2)at(2,-2){\begin{math}\La\end{math}};
        \node[Feuille](E1)at(0,-4){};
        \node[Feuille](E2)at(2,-4){};
        \draw[Arete](N1)--(S1);
        \draw[Arete](N2)--(S2);
        \draw[Arete](N1)--(E1);
        \draw[Arete](N2)--(E2);
    \end{tikzpicture}}\end{split}}
    \enspace + \enspace
    \Sbf_{
    \begin{split}
    \scalebox{.25}{\begin{tikzpicture}[yscale=.8]
        \node[Feuille](S1)at(0,0){};
        \node[Feuille](S2)at(2,0){};
        \node[Operateur](N1)at(0,-2){\begin{math}\La\end{math}};
        \node[Operateur](N2)at(2,-2){\begin{math}\La\end{math}};
        \node[Feuille](E1)at(0,-4){};
        \node[Feuille](E2)at(2,-4){};
        \draw[Arete](N1)--(S1);
        \draw[Arete](N2)--(S2);
        \draw[Arete](N1)--(E1);
        \draw[Arete](N2)--(E2);
    \end{tikzpicture}}\end{split}}
    \otimes
    \Sbf_{
    \begin{split}
    \scalebox{.25}{\begin{tikzpicture}[yscale=.8]
        \node[Feuille](S1)at(0,0){};
        \node[Operateur](N1)at(0,-2){\begin{math}\La\end{math}};
        \node[Feuille](E1)at(0,-4){};
        \draw[Arete](N1)--(S1);
        \draw[Arete](N1)--(E1);
    \end{tikzpicture}}\end{split}}\,,
\end{multline}
showing that the coproduct is neither well-defined on the $\Tbf_{[x]}$.
\medskip

\subsubsection{Properties}
By using similar arguments as those used to establish Proposition
\ref{prop:PRO_vers_AHC_generation_liberte} together with the fact that
$\equiv$ satisfies \ref{item:propriete_bonnes_congruences_2} and the
product formula of Proposition \ref{prop:PRO_vers_AHC_congruence_produit},
we obtain that $\PvH(\Pca/_\equiv)$ is freely generated as an algebra by
the $\Tbf_{[x]}$ where the $[x]$ are $\equiv$-equivalence classes of
indecomposable and reduced elements of $\Pca$. Moreover, when $\omega$
is a grading of $\Pca$ so that all elements of a same $\equiv$-equivalence
class have the same degree, the bialgebra $\PvH(\Pca/_\equiv)$ is graded
by the grading inherited from the one of $\PvH(\Pca)$ and forms hence a
combinatorial Hopf algebra.
\medskip

\begin{Proposition} \label{prop:congruence_moins_fine_donne_sous_ahc}
    Let $\Pca$ be a free PRO and $\equiv_1$ and $\equiv_2$ be two
    stiff congruences of $\Pca$ such that $\equiv_1$ is finer than
    $\equiv_2$. Then, $\PvH\left(\Pca/_{\equiv_2}\right)$ is a
    sub-bialgebra of $\PvH\left(\Pca/_{\equiv_1}\right)$.
\end{Proposition}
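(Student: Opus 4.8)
The plan is to realize both $\PvH\left(\Pca/_{\equiv_2}\right)$ and $\PvH\left(\Pca/_{\equiv_1}\right)$ as sub-bialgebras of the common ambient bialgebra $\PvH(\Pca)$, and then to establish the inclusion of the former into the latter merely at the level of underlying vector spaces. Indeed, both are genuine sub-bialgebras of $\PvH(\Pca)$ by Theorem~\ref{thm:PRO_vers_AHC_congruence}, so their products and coproducts are both obtained by restricting those of $\PvH(\Pca)$. Consequently, once the vector space inclusion $\PvH\left(\Pca/_{\equiv_2}\right) \subseteq \PvH\left(\Pca/_{\equiv_1}\right)$ is known, it is automatically an inclusion of bialgebras: the product of $\PvH\left(\Pca/_{\equiv_2}\right)$ maps it into itself, hence into $\PvH\left(\Pca/_{\equiv_1}\right)$, and likewise its coproduct and unit agree with the restrictions of those of $\PvH\left(\Pca/_{\equiv_1}\right)$.

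The key observation is that since $\equiv_1$ is finer than $\equiv_2$, every $\equiv_2$-equivalence class is a disjoint union of $\equiv_1$-equivalence classes. I would fix a reduced element $x$ of $\Pca$; by property \ref{item:propriete_bonnes_congruences_1} applied to $\equiv_2$, the class $[x]_{\equiv_2}$ consists of reduced elements only, and it splits as $[x]_{\equiv_2} = [x^{(1)}]_{\equiv_1} \sqcup \dots \sqcup [x^{(k)}]_{\equiv_1}$ for some reduced elements $x^{(1)}, \dots, x^{(k)}$ of $\Pca$. Each class $[x^{(i)}]_{\equiv_1}$ is then finite and consists of reduced elements, by \ref{item:propriete_bonnes_congruences_0} and \ref{item:propriete_bonnes_congruences_1} applied to $\equiv_1$, so that each $\Tbf_{[x^{(i)}]_{\equiv_1}}$ is a well-defined element of $\PvH\left(\Pca/_{\equiv_1}\right)$.

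From this decomposition and the definition \eqref{equ:definition_des_T} of the elements $\Tbf$, I would then deduce
\begin{equation}
    \Tbf_{[x]_{\equiv_2}}
    = \sum_{x' \in [x]_{\equiv_2}} \Sbf_{x'}
    = \sum_{i = 1}^{k} \; \sum_{x' \in [x^{(i)}]_{\equiv_1}} \Sbf_{x'}
    = \sum_{i = 1}^{k} \Tbf_{[x^{(i)}]_{\equiv_1}}.
\end{equation}
Hence every generator $\Tbf_{[x]_{\equiv_2}}$ of $\PvH\left(\Pca/_{\equiv_2}\right)$ is a linear combination of generators of $\PvH\left(\Pca/_{\equiv_1}\right)$, which yields the sought vector space inclusion $\PvH\left(\Pca/_{\equiv_2}\right) \subseteq \PvH\left(\Pca/_{\equiv_1}\right)$ inside $\PvH(\Pca)$. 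Together with the first paragraph, this shows that $\PvH\left(\Pca/_{\equiv_2}\right)$ is a sub-bialgebra of $\PvH\left(\Pca/_{\equiv_1}\right)$.

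The argument is essentially immediate once the framework of Theorem~\ref{thm:PRO_vers_AHC_congruence} is in place; there is no serious obstacle. The only point deserving attention is the bookkeeping ensuring that all classes involved are finite and reduced, so that every $\Tbf$ appearing is a legitimate element of the relevant bialgebra, which is exactly guaranteed by the stiffness hypotheses \ref{item:propriete_bonnes_congruences_0} and \ref{item:propriete_bonnes_congruences_1} on $\equiv_1$ and $\equiv_2$.
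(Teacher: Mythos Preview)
Your proposal is correct and follows essentially the same approach as the paper: both argue that, since $\equiv_1$ is finer than $\equiv_2$, each $\Tbf_{[x]_{\equiv_2}}$ decomposes as the sum of the $\Tbf_{[x']_{\equiv_1}}$ over the $\equiv_1$-classes contained in $[x]_{\equiv_2}$, whence the inclusion of sub-bialgebras of $\PvH(\Pca)$. Your version is simply more explicit about why the vector space inclusion suffices and about the stiffness hypotheses ensuring the $\Tbf$'s are well-defined.
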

\begin{proof}
    Since $\equiv_1$
    is finer that $\equiv_2$, any $\Tbf_{[x]_{\equiv_2}}$, where
    $[x]_{\equiv_2}$ is a $\equiv_2$-equivalence class of reduced
    elements of $\Pca$, is a sum of some $\Tbf_{[x']_{\equiv_1}}$.
    More precisely, we have
    \begin{equation}
        \Tbf_{[x]_{\equiv_2}} =
        \sum_{[x']_{\equiv_1} \subseteq [x]_{\equiv_2}}
        \Tbf_{[x']_{\equiv_1}},
    \end{equation}
    implying the result.
\end{proof}
\medskip

\subsection{Related constructions}
In this section, we first describe two constructions allowing to build
stiff PROs. The main interest of these constructions is that the obtained
stiff PROs can be placed at the input of the construction $\PvH$. We next
present a way to recover the natural Hopf algebra of an operad through
the construction $\PvH$ and the previous constructions of stiff PROs.
\medskip

\subsubsection{From operads to stiff PROs}
\label{subsubsec:operades_vers_PROs_rigides}
Any operad $\Oca$ gives naturally rise to a PRO $\OvP(\Oca)$ whose
elements are sequences of elements of $\Oca$ (see \cite{Mar08}).
\medskip

We recall here this construction. Let us set
$\OvP(\Oca) := \sqcup_{p \geq 0} \sqcup_{q \geq 0} \OvP(\Oca)(p, q)$
where
\begin{equation}
    \OvP(\Oca)(p, q) :=
    \{x_1 \dots x_q : x_i \in \Oca(p_i) \mbox{ for all }  i \in [q]
    \mbox{ and } p_1 + \dots + p_q = p\}.
\end{equation}
The horizontal composition of $\OvP(\Oca)$ is the concatenation of
sequences, and the vertical composition of $\OvP(\Oca)$ comes directly
from the composition map of $\Oca$. More precisely, for any
$x_1 \dots x_r \in \OvP(\Oca)(q, r)$ and
$y_{11} \dots y_{1q_1} \dots y_{r1} \dots y_{rq_r} \in \OvP(\Oca)(p, q)$,
we have
\begin{equation} \label{equ:definition_compo_v_construction_r}
    x_1 \dots x_r \circ
    y_{11} \dots y_{1q_1} \dots y_{r1} \dots y_{rq_r}
    := x_1 \circ [y_{11}, \dots, y_{1q_1}] \dots
        x_r \circ [y_{r1}, \dots, y_{rq_r}],
\end{equation}
where for any $i \in [r]$, $x_i \in \Oca(q_i)$ and the occurrences of
$\circ$ in the right-member of \eqref{equ:definition_compo_v_construction_r}
refer to the total composition map of $\Oca$.
\medskip

For instance, if $\Oca$ is the free operad generated by a generator of
arity $2$, $\Oca$ is an operad involving binary trees. Then, the
elements of the PRO $\OvP(\Oca)$ are forests of binary trees. The horizontal
composition of $\OvP(\Oca)$ is the concatenation of forests, and the
vertical composition $F_1 \circ F_2$ in $\OvP(\Oca)$, defined only between
two forests $F_1$ and $F_2$ such that the number of leaves of $F_1$ is
the same as the number of trees in $F_2$, consists in the forest obtained
by grafting, from left to right, the roots of the trees of $F_2$ on the
leaves of $F_1$.
\medskip

\begin{Proposition} \label{prop:operade_vers_bon_PRO}
    Let $\Oca$ be an operad such that the monoid $(\Oca(1), \circ_1)$
    does not contain any nontrivial subgroup. Then, $\OvP(\Oca)$ is a
    stiff PRO.
\end{Proposition}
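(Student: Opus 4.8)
The plan is to realise $\OvP(\Oca)$ as the quotient of a free PRO by the kernel congruence of an evaluation morphism, and then to verify the three conditions \ref{item:propriete_bonnes_congruences_0}, \ref{item:propriete_bonnes_congruences_1}, \ref{item:propriete_bonnes_congruences_2} defining a stiff congruence. Concretely, I would take the bigraded set $G$ with $G(n, 1) := \Oca(n)$ for $n \geq 2$ and $G(1, 1) := \Oca(1) \setminus \{\Unite\}$ (and $G(p, q) := \emptyset$ otherwise), and let $\pi : \Free(G) \to \OvP(\Oca)$ be the unique PRO morphism sending a generator $g \in G(n, 1)$ to the length-one sequence $g \in \OvP(\Oca)(n, 1)$. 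Excluding $\Unite$ from the generators is essential: were it kept, the corresponding generator would be equivalent to the wire $\Unite_1$, immediately breaking \ref{item:propriete_bonnes_congruences_1}. Since every element $x_1 \cdots x_q$ of $\OvP(\Oca)$ is the horizontal composition of its length-one factors (using a wire wherever $x_i = \Unite$), the morphism $\pi$ is surjective, so $\OvP(\Oca) \simeq \Free(G)/_\equiv$ where $\equiv$ is the kernel of $\pi$. The guiding picture is that an element of $\Free(G)$ is a planar forest whose nodes are labelled by elements of $\Oca^+$ (each generator having a single output), that $\pi$ reads off the sequence of operad elements obtained by composing the labels tree by tree from left to right, that a bare wire is sent to $\Unite$, and that the maximal decomposition of such a forest is exactly its list of trees.

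First I would dispose of \ref{item:propriete_bonnes_congruences_2}, which is essentially free. Because the horizontal composition of $\OvP(\Oca)$ is concatenation of sequences and $\pi$ preserves output arities, $\pi$ sends the $i$-th tree of a forest to the $i$-th term of its image sequence. Hence if $x \equiv x'$, then $\pi(x) = \pi(x')$ forces the two forests to have the same number $\ell$ of trees and, term by term, $\pi(\Dec(x)_i) = \pi(\Dec(x')_i)$, that is $\Dec(x)_i \equiv \Dec(x')_i$; this is precisely \ref{item:propriete_bonnes_congruences_2}.

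The heart of the argument, and the place where the hypothesis is used, is \ref{item:propriete_bonnes_congruences_1}. By the same component-by-component analysis it suffices to show that a single tree $T$ with at least one node never satisfies $\pi(T) = \Unite$: the tree factors of a reduced forest are genuine trees, and if none of them maps to $\Unite$, then no forest equivalent to a reduced one can acquire a bare-wire factor. So suppose $\pi(T) = \Unite \in \Oca(1)$. Since $\pi$ preserves input arities and $\In(\Unite) = 1$, the tree $T$ has a single leaf, which forces every node to have input arity $1$; thus $T$ is a chain of generators $a_1, \dots, a_k \in \Oca(1) \setminus \{\Unite\}$ with $k \geq 1$ and $a_1 \circ_1 \cdots \circ_1 a_k = \Unite$. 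The crucial step is that such a relation is impossible when $(\Oca(1), \circ_1)$ has no nontrivial subgroup. Indeed $k \geq 2$, since a single non-unit cannot equal $\Unite$; writing $a := a_1$ and $b := a_2 \circ_1 \cdots \circ_1 a_k$ we get $a \circ_1 b = \Unite$, so $b \circ_1 a$ is idempotent. If $b \circ_1 a = \Unite$, then $a$ is invertible and, being distinct from $\Unite$, generates a subgroup strictly larger than $\{\Unite\}$; if $b \circ_1 a \neq \Unite$, then $\{b \circ_1 a\}$ is itself a subgroup distinct from $\{\Unite\}$. Either way we contradict the hypothesis, so $\pi(T) \neq \Unite$ and \ref{item:propriete_bonnes_congruences_1} holds.

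It remains to check the finiteness condition \ref{item:propriete_bonnes_congruences_0}, which I expect to rest on the same absence of subgroups. For a reduced $x$ with $\pi(x) = (o_1, \dots, o_q)$, the class $[x]$ consists of the forests whose $i$-th tree maps to $o_i$, so it is enough to bound the number of trees mapping to a fixed $o \in \Oca(n)$: such a tree has exactly $n$ leaves, which controls its branching nodes, while its unary nodes form chains realising factorisations of elements of $\Oca(1)$ into non-units. The no-subgroup hypothesis forbids nontrivial idempotents in $(\Oca(1), \circ_1)$ by the very extraction used above, which rules out arbitrarily long such chains and leaves only finitely many trees. Granting this, $\equiv$ is stiff and $\OvP(\Oca)$ is a stiff PRO. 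The main obstacle is exactly the crucial step in \ref{item:propriete_bonnes_congruences_1}: converting the monoid-theoretic hypothesis into the statement that $\Unite$ is not a nonempty product of non-units, through the idempotent $b \circ_1 a$ extracted from $a \circ_1 b = \Unite$.
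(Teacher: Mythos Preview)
Your handling of \ref{item:propriete_bonnes_congruences_1} and \ref{item:propriete_bonnes_congruences_2} is sound and parallels the paper's argument: both present $\OvP(\Oca)$ as a quotient of a free PRO whose generators all have output arity $1$, so that prographs are forests and the maximal decomposition into trees is automatically respected by the kernel congruence. The paper takes $G'$ to be a set of operadic generators of $\Oca$ rather than all of $\Oca^+$, but for these two conditions the difference is immaterial. Your derivation of ``no product of non-units equals $\Unite$'' from the no-subgroup hypothesis via the idempotent $b\circ_1 a$ is exactly the mechanism behind the paper's one-line claim that $x\circ_1 y\neq\Unite$ for $x,y\in\Oca(1)\setminus\{\Unite\}$.

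There is, however, a genuine gap in your sketch for \ref{item:propriete_bonnes_congruences_0}. The absence of nontrivial subgroups in $(\Oca(1),\circ_1)$ does exclude nontrivial idempotents, but this neither bounds the length of chains of non-units nor forces a fixed element to have only finitely many factorisations into non-units. Take $\Oca$ concentrated in arity $1$ with $\Oca(1)=(\mathbb{R}_{\geq 0},+)$: the only idempotent is $0$, the only subgroup is $\{0\}$, yet with your generating set $G(1,1)=\mathbb{R}_{>0}$ the $\equiv$-class of the single-node tree labelled $1$ contains every chain $(a_1,\dots,a_k)$ of positive reals with $a_1+\cdots+a_k=1$, which is infinite. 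So your proposed route to \ref{item:propriete_bonnes_congruences_0} does not go through. For comparison, the paper's own proof checks only \ref{item:propriete_bonnes_congruences_1} and \ref{item:propriete_bonnes_congruences_2} and is silent on \ref{item:propriete_bonnes_congruences_0}; you are attempting to fill a point the paper leaves unaddressed, but the argument you give for it is not correct.
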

\begin{proof}
    As any operad, $\Oca$ is the quotient by a certain operadic
    congruence $\equiv$ of the free operad generated by a certain set of
    generators $G$. It follows directly from the definition of the
    construction $\OvP$ that the PRO $\OvP(\Oca)$ is the quotient by the
    congruence of PROs $\equiv'$ of the free PRO generated by $G'$  where
    \begin{equation}
        G'(p, q) :=
        \begin{cases}
            G(p) & \mbox{if } p \geq 1 \mbox{ and } q = 1, \\
            \emptyset & \mbox{otherwise},
        \end{cases}
    \end{equation}
    and $\equiv'$ is the finest congruence of PROs satisfying $x \equiv' y$
    for any relation $x \equiv y$ between elements $x$ and $y$ of the
    free operad generated by $G$. Since by hypothesis $(\Oca(1), \circ_1)$
    does not contain any nontrivial subgroup, for all elements $x$ and
    $y$ of $\Oca(1) \setminus \{\Unite\}$, $x \circ_1 y \ne \Unite$. Then,
    $\equiv'$ satisfies \ref{item:propriete_bonnes_congruences_1}.
    Moreover, by  definition of $\OvP$, $\equiv'$ satisfies
    \ref{item:propriete_bonnes_congruences_2}. Hence, $\OvP(\Oca)$ is a
    stiff PRO.
\end{proof}
\medskip

\subsubsection{From monoids to stiff PROs}
\label{subsubsec:monoides_vers_PROs}
Any monoid $\Mca$ can be seen as an operad concentrated in arity one.
Then, starting from a monoid $\Mca$, one can construct a PRO $\MvP(\Mca)$
by applying the construction $\OvP$ on $\Mca$ seen as an operad.
\medskip

This construction can be rephrased as follows. We have
$\MvP(\Mca) = \sqcup_{p \geq 0} \sqcup_{q \geq 0} \MvP(\Mca)(p, q)$ where
\begin{equation}
    \MvP(\Mca)(p, q) =
    \begin{cases}
        \left\{x_1 \dots x_p : x_i \in M \mbox{ for all } i \in [p] \right\} &
            \mbox{if } p = q, \\
        \emptyset & \mbox{otherwise}. \\
    \end{cases}
\end{equation}
The horizontal composition of $\MvP(\Mca)$ is the concatenation of
sequences and the vertical composition
$\circ : \MvP(\Mca)(p, p) \times \MvP(\Mca)(p, p) \to \MvP(\Mca)(p, p)$
of $\MvP(\Mca)$ satisfies, for any $x_1 \dots x_p \in \MvP(\Mca)(p, p)$
and $y_1 \dots y_q \in \MvP(\Mca)(q, q)$,
\begin{equation}
    x_1 \dots x_p \circ y_1 \dots y_p =
    (x_1 \bullet y_1) \dots (x_p \bullet y_p),
\end{equation}
where $\bullet$ is the product of $\Mca$.
\medskip

For instance, if $\Mca$ is the additive monoid of natural numbers,
the PRO $\MvP(\Mca)$ contains all words over $\EnsNat$. The horizontal
composition of $\MvP(\Mca)$ is the concatenation of words, and the
vertical composition of $\MvP(\Mca)$, defined only on words with a same
length, is the componentwise addition of their letters.
\medskip

\begin{Proposition} \label{prop:monoide_vers_bon_PRO}
    Let $\Mca$ be a monoid that does not contain any nontrivial subgroup.
    Then, $\MvP(\Mca)$ is a stiff PRO.
\end{Proposition}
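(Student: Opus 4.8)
The plan is to reduce this statement to Proposition \ref{prop:operade_vers_bon_PRO}, which has already been established, so that essentially no new work is required. By the very definition of the construction $\MvP$ recalled above, $\MvP(\Mca)$ is nothing but $\OvP(\Oca)$, where $\Oca$ is the operad obtained by viewing $\Mca$ as concentrated in arity one. First I would make this identification precise: set $\Oca(1) := \Mca$ and $\Oca(n) := \emptyset$ for all $n \geq 2$, let the partial composition $\circ_1 : \Oca(1) \times \Oca(1) \to \Oca(1)$ be the product of $\Mca$, and let the operadic unit $\Unite$ be the neutral element of $\Mca$. The operad axioms \eqref{eq:AssocSerie}--\eqref{eq:Unite} then degenerate to the associativity and unit axioms of the monoid $\Mca$, so that $\Oca$ is a genuine operad. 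Moreover, applying $\OvP$ to this $\Oca$ reproduces the description of $\MvP(\Mca)$ given above: since every element has arity one, $\OvP(\Oca)(p, q)$ is empty unless $p = q$, in which case it equals $\{x_1 \dots x_p : x_i \in \Mca\}$, and the vertical composition \eqref{equ:definition_compo_v_construction_r} collapses to the componentwise product induced by $\bullet$.

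Next I would verify that the hypotheses of Proposition \ref{prop:operade_vers_bon_PRO} are satisfied. That proposition requires the monoid $(\Oca(1), \circ_1)$ to contain no nontrivial subgroup. Because $\Oca$ is concentrated in arity one, $(\Oca(1), \circ_1)$ is literally the monoid $\Mca$ equipped with its own product; hence the standing assumption that $\Mca$ contains no nontrivial subgroup is exactly the hypothesis demanded. Applying Proposition \ref{prop:operade_vers_bon_PRO} to $\Oca$ then gives directly that $\OvP(\Oca) = \MvP(\Mca)$ is a stiff PRO, which is the desired conclusion.

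There is no genuine obstacle in this argument: the entire content is the observation that a monoid is the special case of an operad living in arity one, together with the routine bookkeeping identifying $\OvP(\Oca)$ with $\MvP(\Mca)$ in this degenerate case. The only subtlety worth flagging is that property \ref{item:propriete_bonnes_congruences_1} for the associated congruence of PROs is precisely what could fail without the subgroup hypothesis; but this verification is already carried out inside the proof of Proposition \ref{prop:operade_vers_bon_PRO} (property \ref{item:propriete_bonnes_congruences_2} being automatic from the shape of $\OvP$), and it is the no-nontrivial-subgroup condition on $\Mca$ that supplies it. Thus the whole proposition follows as an immediate corollary, and I would present it as such.
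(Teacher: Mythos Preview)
Your proposal is correct and follows essentially the same approach as the paper: reduce to Proposition~\ref{prop:operade_vers_bon_PRO} by viewing $\Mca$ as an operad concentrated in arity one, so that $\MvP(\Mca) = \OvP(\Oca)$ and $(\Oca(1),\circ_1) = \Mca$ inherits the no-nontrivial-subgroup hypothesis. The paper's proof is simply a terser version of what you wrote.
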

\begin{proof}
    Since $\Mca$ does not contain any nontrivial subgroup, seen as an
    operad, the elements of arity one of $\Mca$ do not contain any
    nontrivial subgroup. Hence, by definition of the construction $\MvP$
    passing by $\OvP$ and by Proposition \ref{prop:operade_vers_bon_PRO},
    $\MvP(\Mca)$ is a stiff PRO.
\end{proof}
\medskip

\subsubsection{The natural Hopf algebra of an operad}
We call {\em abelianization} of a bialgebra $\Hca$ the quotient of $\Hca$
by the bialgebra ideal spanned by the $x \cdot y - y \cdot x$ for all
$x, y \in \Hca$.
\medskip

Here is the link between our construction $\PvH$ and the construction
$\OvH$.
\medskip

\begin{Proposition} \label{prop:ahc_naturelle_par_h}
    Let $\Oca$ be an operad such that the monoid $(\Oca(1), \circ_1)$
    does not contain any nontrivial subgroup. Then, the bialgebra
    $\OvH(\Oca)$ is the abelianization of $\PvH(\OvP(\Oca))$.
\end{Proposition}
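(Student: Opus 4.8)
The plan is to construct an explicit isomorphism of bialgebras between $\OvH(\Oca)$ and the abelianization of $\PvH(\OvP(\Oca))$. First I would invoke Proposition \ref{prop:operade_vers_bon_PRO}: the hypothesis that $(\Oca(1), \circ_1)$ contains no nontrivial subgroup guarantees that $\OvP(\Oca)$ is a stiff PRO, so that $\PvH(\OvP(\Oca))$ is well-defined as the sub-bialgebra of $\PvH(\Free(G'))$ spanned by the $\Tbf_{[w]}$, where $[w]$ runs over the classes of reduced elements. The key combinatorial observation is that, since the horizontal composition of $\OvP(\Oca)$ is concatenation and the wire $\Unite_1$ corresponds to the length-one sequence consisting of the operadic unit $\Unite$, the horizontal factors of a sequence $x_1 \dots x_q$ are exactly the length-one sequences $x_i$; hence a reduced element of $\OvP(\Oca)$ is precisely a word $x_1 \dots x_q$ with all $x_i \in \Oca^+$. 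Thus the reduced classes are in bijection with words over the alphabet $\Oca^+$, and the indecomposable reduced ones with the single letters $x \in \Oca^+$.

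Next I would pin down the algebra structures. By Proposition \ref{prop:PRO_vers_AHC_congruence_produit} the product of the $\Tbf_{[w]}$ is concatenation of words, so by the freeness statement for stiff PROs established above (that $\PvH(\Pca/_\equiv)$ is freely generated as an algebra by the $\Tbf_{[x]}$ with $[x]$ indecomposable and reduced), $\PvH(\OvP(\Oca))$ is the free associative algebra on the generators $\Tbf_{[x]}$, $x \in \Oca^+$. Its abelianization is therefore the free commutative algebra on these generators, whose basis is indexed by finite multisets of elements of $\Oca^+$ --- exactly the algebra underlying $\OvH(\Oca)$. I would then define $\Phi : \OvH(\Oca) \to \PvH(\OvP(\Oca))/_{\mathrm{ab}}$ on generators by $\Tit_x \mapsto \overline{\Tbf_{[x]}}$ and extend it multiplicatively; it is manifestly an isomorphism of algebras.

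The heart of the proof is checking that $\Phi$ intertwines the coproducts, and since both coproducts are algebra morphisms it suffices to verify this on the generators $\Tit_x$. Here I would compute $\Delta(\Tbf_{[x]})$ using Proposition \ref{prop:PRO_vers_AHC_congruence_coproduit} and then unfold the vertical composition of $\OvP(\Oca)$ via \eqref{equ:definition_compo_v_construction_r}: for a single operadic element $x$, a factorization $y \circ z = x$ in $\OvP(\Oca)$ forces $y \in \Oca(m)$ to be a single element and $z = z_1 \dots z_m$ a sequence with $y \circ [z_1, \dots, z_m] = x$ in $\Oca$. Applying $\Reduit$ deletes the occurrences of $\Unite$, and after passing to the abelianization --- where $\Tbf_{\Reduit(z_1 \dots z_m)}$ becomes the commutative product of the $\Tbf_{\Reduit(z_i)}$ --- the resulting expression matches term-for-term the defining formula for $\Delta(\Tit_x)$ in $\OvH(\Oca)$.

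The step I expect to be the main obstacle is the bookkeeping around reduction and units: I must check that deleting $\Unite$-factors through $\Reduit$ on the PRO side corresponds exactly to the fact that $\Tit_\Unite$ is the unit of $\OvH(\Oca)$, so that summands with $y = \Unite$ or some $z_i = \Unite$ are weighted identically on both sides, and that abelianization converts the concatenation $\Tbf_{\Reduit(z_1 \dots z_m)}$ into the commutative product $\Tit_{z_1} \cdots \Tit_{z_m}$ without introducing or losing multiplicities. Once this correspondence is established on generators, $\Phi$ is a morphism of bialgebras, and being bijective it is the desired isomorphism, which proves that $\OvH(\Oca)$ is the abelianization of $\PvH(\OvP(\Oca))$.
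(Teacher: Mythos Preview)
Your proposal is correct and follows essentially the same strategy as the paper: both identify the free generators of $\PvH(\OvP(\Oca))$ with the elements of $\Oca^+$ and match them with the generators $\Tit_x$ of $\OvH(\Oca)$. The paper's proof is the mirror image of yours: instead of defining $\Phi : \OvH(\Oca) \to \PvH(\OvP(\Oca))/_{\mathrm{ab}}$ and checking the coproduct on generators, the paper defines a surjection $\phi : \PvH(\OvP(\Oca)) \to \OvH(\Oca)$ by $\Tbf_x \mapsto \Tit_x$, asserts that it extends to a bialgebra morphism, and observes that its kernel is exactly the commutator ideal spanned by the $\Tbf_{x*y} - \Tbf_{y*x}$. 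Your route is slightly more explicit (you actually verify the coproduct compatibility via Proposition~\ref{prop:PRO_vers_AHC_congruence_coproduit} and the description \eqref{equ:definition_compo_v_construction_r} of vertical composition), whereas the paper leaves that verification implicit; conversely the paper's formulation makes the identification of the quotient with the abelianization immediate without needing to check injectivity separately.
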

\begin{proof}
    By Proposition \ref{prop:operade_vers_bon_PRO}, $\OvP(\Oca)$ is a
    stiff PRO, and then, by Theorem \ref{thm:PRO_vers_AHC_congruence},
    $\PvH(\OvP(\Oca))$ is a bialgebra. By construction, this bialgebra
    is freely generated by the $\Tbf_x$ where $x \in \Oca$. Hence, the
    map $\phi : \PvH(\OvP(\Oca)) \to \OvH(\Oca)$ defined for any
    $x \in \Oca$ by $\phi(\Tbf_x) := \Tit_x$ can be uniquely extended
    into a bialgebra morphism, which we denote also by $\phi$. Since
    $\OvH(\Oca)$ is generated by the $\Tit_x$ where $x \in \Oca$, $\phi$
    is surjective. Directly from the definition of the construction $\OvH$,
    we observe that the kernel of $\phi$ is the bialgebra ideal $I$
    spanned by the $\Tbf_{x * y} -  \Tbf_{y * x}$ for all
    $x, y \in \OvP(\Oca)$. Then, the associated map
    $\phi_I : \PvH(\OvP(\Oca))/_I \to \OvH(\Oca)$ is a bialgebra
    isomorphism.
\end{proof}
\medskip

\section{Examples of application of the construction}
\label{sec:exemples}
We conclude this paper by presenting examples of application of the
construction $\PvH$. The PROs considered in this section fit into the
diagram represented by Figure \ref{fig:diagramme_PROs} and the obtained
Hopf algebras fit into the diagram represented by Figure \ref{fig:diagramme_AHC}.
\begin{figure}[ht]
    \centering
    \begin{tikzpicture}[scale=.55]
        \node(PRF)at(0,0){$\PRF_\gamma$};
        \node(FBT)at(3,2){$\FBT_\gamma$};
        \node(As)at(3,-2){$\As_\gamma$};
        \node(BAs)at(6,0){$\BAs_\gamma$};
        \node(Heap)at(10,2){$\Heap_\gamma$};
        \node(FHeap)at(10,-2){$\FHeap_\gamma$};
        \draw[Surjection](PRF)--(As);
        \draw[Surjection](FBT)--(BAs);
        \draw[Surjection](Heap)--(FHeap);
        \draw[Injection](PRF)--(FBT);
        \draw[Injection](As)--(BAs);
    \end{tikzpicture}
    \caption{Diagram of PROs where arrows $\rightarrowtail$
    (resp. $\twoheadrightarrow$) are injective (resp. surjective)
    PRO morphisms. The parameter $\gamma$ is a positive integer.
    When $\gamma = 0$, $\PRF_0=\As_0=\Heap_0=\FHeap_0$ and
    $\FBT_0=\BAs_0$.}
    \label{fig:diagramme_PROs}
\end{figure}
\begin{figure}[ht]
    \centering
    \begin{tikzpicture}[scale=.55]
        \node(PRF)at(0,0){$\PvH(\PRF_\gamma)$};
        \node(FBT)at(3,2){$\PvH(\FBT_\gamma)$};
        \node(As)at(3,-2){$\PvH(\As_\gamma)$};
        \node(BAs)at(6,0){$\PvH(\BAs_\gamma)$};
        \node(Heap)at(10,2){$\PvH(\Heap_\gamma)$};
        \node(FHeap)at(10,-2){$\PvH(\FHeap_\gamma)$};
        \draw[Injection](As)--(PRF);
        \draw[Injection](BAs)--(FBT);
        \draw[Injection](FHeap)--(Heap);
        \draw[Surjection](FBT)--(PRF);
        \draw[Surjection](BAs)--(As);
    \end{tikzpicture}
    \caption{Diagram of combinatorial Hopf algebras where arrows
    $\rightarrowtail$ (resp. $\twoheadrightarrow$) are injective
    (resp. surjective) PRO morphisms. The parameter $\gamma$ is
    a positive integer. When $\gamma = 0$, $\PvH(\PRF_0)=\PvH(\As_0)=\PvH(\Heap_0)=\PvH(\FHeap_0)$ and $\PvH(\FBT_0)=\PvH(\BAs_0)$.}
    \label{fig:diagramme_AHC}
\end{figure}
\medskip

\subsection{Hopf algebras of forests}
We present here the construction of two Hopf algebras of forests, one
depending on a nonnegative integer $\gamma$, and with different
gradings. The PRO we shall define in this section will intervene in
the next examples.
\medskip

\subsubsection{PRO of forests with a fixed arity}
Let $\gamma$ be a nonnegative integer and $\PRF_\gamma$ be the free PRO
generated by $G := G(\gamma + 1, 1) := \{\La\}$, with the grading
$\omega$ defined by $\omega(\La) := 1$. Any prograph $x$ of $\PRF_\gamma$
can be seen as a planar forest of planar rooted trees with only internal
nodes of arity $\gamma + 1$. Since the reduced elements of $\PRF_\gamma$
have no wire, they are encoded by forests of nonempty trees.
\medskip

\subsubsection{Hopf algebra}
By Theorem \ref{thm:PRO_vers_AHC_bigebre} and
Proposition \ref{prop:PRO_vers_AHC_graduation}, $\PvH(\PRF_\gamma)$ is a
combinatorial Hopf algebra. By
Proposition \ref{prop:PRO_vers_AHC_generation_liberte}, as an algebra,
$\PvH(\PRF_\gamma)$ is freely generated by the $\Sbf_T$, where the $T$ are
nonempty planar rooted trees with only internal nodes of arity
$\gamma + 1$. Its bases are indexed by planar forests of such trees
where the degree of a basis element $\Sbf_F$ is the number of internal
nodes of $F$.
\medskip

Notice that the bases of $\PvH(\PRF_0)$ are indexed by forests of linear
trees and that $\PvH(\PRF_0)$ and $\SymNC$ are trivially isomorphic as
combinatorial Hopf algebras.
\medskip

\subsubsection{Coproduct}
By definition of the construction $\PvH$, the coproduct of $\PvH(\PRF_\gamma)$
is given on a generator $\Sbf_T$ by
\begin{equation} \label{equ:coproduit_forets}
    \Delta(\Sbf_T) =
    \sum_{T' \in \Adm(T)} \Sbf_{T'} \otimes
    \Sbf_{T/_{T'}},
\end{equation}
where $\Adm(T)$ is the set of {\em admissible cuts} of $T$, that is, the
empty tree or the subtrees of $T$ containing the root of $T$ and where
$T/_{T'}$ denotes the forest consisting in the maximal subtrees of $T$
whose roots are leaves of $T'$, by respecting the order of these leaves
in $T'$ and by removing the empty trees. For instance, we have
\begin{multline}
    \Delta \Sbf_{
    \scalebox{.25}{\begin{tikzpicture}[yscale=.5,xscale=.9]
        \node[Feuille](0)at(0.00,-6.50){};
        \node[Feuille](10)at(6.00,-9.75){};
        \node[Feuille](11)at(7.00,-9.75){};
        \node[Feuille](12)at(8.00,-3.25){};
        \node[Feuille](2)at(1.00,-6.50){};
        \node[Feuille](3)at(2.00,-6.50){};
        \node[Feuille](5)at(3.00,-6.50){};
        \node[Feuille](7)at(4.00,-6.50){};
        \node[Feuille](8)at(5.00,-9.75){};
        \node[Noeud](1)at(1.00,-3.25){};
        \node[Noeud](4)at(5.00,0.00){};
        \node[Noeud](6)at(4.00,-3.25){};
        \node[Noeud](9)at(6.00,-6.50){};
        \draw[Arete](0)--(1);
        \draw[Arete](1)--(4);
        \draw[Arete](10)--(9);
        \draw[Arete](11)--(9);
        \draw[Arete](12)--(4);
        \draw[Arete](2)--(1);
        \draw[Arete](3)--(1);
        \draw[Arete](5)--(6);
        \draw[Arete](6)--(4);
        \draw[Arete](7)--(6);
        \draw[Arete](8)--(9);
        \draw[Arete](9)--(6);
    \end{tikzpicture}}}
    \enspace = \enspace
    \Sbf_{\emptyset} \otimes
    \Sbf_{
    \scalebox{.25}{\begin{tikzpicture}[yscale=.5,xscale=.9]
        \node[Feuille](0)at(0.00,-6.50){};
        \node[Feuille](10)at(6.00,-9.75){};
        \node[Feuille](11)at(7.00,-9.75){};
        \node[Feuille](12)at(8.00,-3.25){};
        \node[Feuille](2)at(1.00,-6.50){};
        \node[Feuille](3)at(2.00,-6.50){};
        \node[Feuille](5)at(3.00,-6.50){};
        \node[Feuille](7)at(4.00,-6.50){};
        \node[Feuille](8)at(5.00,-9.75){};
        \node[Noeud](1)at(1.00,-3.25){};
        \node[Noeud](4)at(5.00,0.00){};
        \node[Noeud](6)at(4.00,-3.25){};
        \node[Noeud](9)at(6.00,-6.50){};
        \draw[Arete](0)--(1);
        \draw[Arete](1)--(4);
        \draw[Arete](10)--(9);
        \draw[Arete](11)--(9);
        \draw[Arete](12)--(4);
        \draw[Arete](2)--(1);
        \draw[Arete](3)--(1);
        \draw[Arete](5)--(6);
        \draw[Arete](6)--(4);
        \draw[Arete](7)--(6);
        \draw[Arete](8)--(9);
        \draw[Arete](9)--(6);
    \end{tikzpicture}}}
    \enspace + \enspace
    \Sbf_{
    \scalebox{.25}{\begin{tikzpicture}[yscale=.7]
        \node[Feuille](0)at(0.00,-2.00){};
        \node[Feuille](2)at(1.00,-2.00){};
        \node[Feuille](3)at(2.00,-2.00){};
        \node[Noeud](1)at(1.00,0.00){};
        \draw[Arete](0)--(1);
        \draw[Arete](2)--(1);
        \draw[Arete](3)--(1);
    \end{tikzpicture}}}
    \otimes \Sbf_{
    \scalebox{.25}{\begin{tikzpicture}[yscale=.7]
        \node[Feuille](0)at(0.00,-2.00){};
        \node[Feuille](2)at(1.00,-2.00){};
        \node[Feuille](3)at(2.00,-2.00){};
        \node[Noeud](1)at(1.00,0.00){};
        \draw[Arete](0)--(1);
        \draw[Arete](2)--(1);
        \draw[Arete](3)--(1);
        \node[Feuille](b0)at(3.00,-2){};
        \node[Feuille](b2)at(4.00,-2){};
        \node[Feuille](b3)at(5.00,-4){};
        \node[Feuille](b5)at(6.00,-4){};
        \node[Feuille](b6)at(7.00,-4){};
        \node[Noeud](b1)at(4.00,0.00){};
        \node[Noeud](b4)at(6.00,-2){};
        \draw[Arete](b0)--(b1);
        \draw[Arete](b2)--(b1);
        \draw[Arete](b3)--(b4);
        \draw[Arete](b4)--(b1);
        \draw[Arete](b5)--(b4);
        \draw[Arete](b6)--(b4);
    \end{tikzpicture}}}
    \enspace + \enspace
    \Sbf_{
    \scalebox{.25}{\begin{tikzpicture}[yscale=.7]
        \node[Feuille](0)at(0.00,-4){};
        \node[Feuille](2)at(1.00,-4){};
        \node[Feuille](3)at(2.00,-4){};
        \node[Feuille](5)at(3.00,-2){};
        \node[Feuille](6)at(4.00,-2){};
        \node[Noeud](1)at(1.00,-2){};
        \node[Noeud](4)at(3.00,0.00){};
        \draw[Arete](0)--(1);
        \draw[Arete](1)--(4);
        \draw[Arete](2)--(1);
        \draw[Arete](3)--(1);
        \draw[Arete](5)--(4);
        \draw[Arete](6)--(4);
    \end{tikzpicture}}}
    \otimes
    \Sbf_{
    \scalebox{.25}{\begin{tikzpicture}[yscale=.7]
        \node[Feuille](b0)at(3.00,-2){};
        \node[Feuille](b2)at(4.00,-2){};
        \node[Feuille](b3)at(5.00,-4){};
        \node[Feuille](b5)at(6.00,-4){};
        \node[Feuille](b6)at(7.00,-4){};
        \node[Noeud](b1)at(4.00,0.00){};
        \node[Noeud](b4)at(6.00,-2){};
        \draw[Arete](b0)--(b1);
        \draw[Arete](b2)--(b1);
        \draw[Arete](b3)--(b4);
        \draw[Arete](b4)--(b1);
        \draw[Arete](b5)--(b4);
        \draw[Arete](b6)--(b4);
    \end{tikzpicture}}} \\
    \enspace + \enspace
    \Sbf_{
    \scalebox{.25}{\begin{tikzpicture}[yscale=.7]
        \node[Feuille](0)at(0.00,-2){};
        \node[Feuille](2)at(1.00,-4){};
        \node[Feuille](4)at(2.00,-4){};
        \node[Feuille](5)at(3.00,-4){};
        \node[Feuille](6)at(4.00,-2){};
        \node[Noeud](1)at(2.00,0.00){};
        \node[Noeud](3)at(2.00,-2){};
        \draw[Arete](0)--(1);
        \draw[Arete](2)--(3);
        \draw[Arete](3)--(1);
        \draw[Arete](4)--(3);
        \draw[Arete](5)--(3);
        \draw[Arete](6)--(1);
    \end{tikzpicture}}}
    \otimes \Sbf_{
    \scalebox{.25}{\begin{tikzpicture}[yscale=.7]
        \node[Feuille](0)at(0.00,-2.00){};
        \node[Feuille](2)at(1.00,-2.00){};
        \node[Feuille](3)at(2.00,-2.00){};
        \node[Noeud](1)at(1.00,0.00){};
        \draw[Arete](0)--(1);
        \draw[Arete](2)--(1);
        \draw[Arete](3)--(1);
        \node[Feuille](0)at(3.00,-2.00){};
        \node[Feuille](2)at(4.00,-2.00){};
        \node[Feuille](3)at(5.00,-2.00){};
        \node[Noeud](1)at(4.00,0.00){};
        \draw[Arete](0)--(1);
        \draw[Arete](2)--(1);
        \draw[Arete](3)--(1);
    \end{tikzpicture}}}
    \enspace + \enspace
    \Sbf_{
    \scalebox{.25}{\begin{tikzpicture}[yscale=.5]
        \node[Feuille](0)at(0.00,-6.67){};
        \node[Feuille](2)at(1.00,-6.67){};
        \node[Feuille](3)at(2.00,-6.67){};
        \node[Feuille](5)at(3.00,-6.67){};
        \node[Feuille](7)at(4.00,-6.67){};
        \node[Feuille](8)at(5.00,-6.67){};
        \node[Feuille](9)at(6.00,-3.33){};
        \node[Noeud](1)at(1.00,-3.33){};
        \node[Noeud](4)at(4.00,0.00){};
        \node[Noeud](6)at(4.00,-3.33){};
        \draw[Arete](0)--(1);
        \draw[Arete](1)--(4);
        \draw[Arete](2)--(1);
        \draw[Arete](3)--(1);
        \draw[Arete](5)--(6);
        \draw[Arete](6)--(4);
        \draw[Arete](7)--(6);
        \draw[Arete](8)--(6);
        \draw[Arete](9)--(4);
    \end{tikzpicture}}} \otimes
    \Sbf_{
    \scalebox{.25}{\begin{tikzpicture}[yscale=.7]
        \node[Feuille](0)at(0.00,-2.00){};
        \node[Feuille](2)at(1.00,-2.00){};
        \node[Feuille](3)at(2.00,-2.00){};
        \node[Noeud](1)at(1.00,0.00){};
        \draw[Arete](0)--(1);
        \draw[Arete](2)--(1);
        \draw[Arete](3)--(1);
    \end{tikzpicture}}}
    \enspace + \enspace
    \Sbf_{
    \scalebox{.25}{\begin{tikzpicture}[yscale=.5]
        \node[Feuille](0)at(0.00,-2.50){};
        \node[Feuille](2)at(1.00,-5.00){};
        \node[Feuille](4)at(2.00,-5.00){};
        \node[Feuille](5)at(3.00,-7.50){};
        \node[Feuille](7)at(4.00,-7.50){};
        \node[Feuille](8)at(5.00,-7.50){};
        \node[Feuille](9)at(6.00,-2.50){};
        \node[Noeud](1)at(3.00,0.00){};
        \node[Noeud](3)at(2.00,-2.50){};
        \node[Noeud](6)at(4.00,-5.00){};
        \draw[Arete](0)--(1);
        \draw[Arete](2)--(3);
        \draw[Arete](3)--(1);
        \draw[Arete](4)--(3);
        \draw[Arete](5)--(6);
        \draw[Arete](6)--(3);
        \draw[Arete](7)--(6);
        \draw[Arete](8)--(6);
        \draw[Arete](9)--(1);
    \end{tikzpicture}}}
    \otimes
    \Sbf_{
    \scalebox{.25}{\begin{tikzpicture}[yscale=.7]
        \node[Feuille](0)at(0.00,-2.00){};
        \node[Feuille](2)at(1.00,-2.00){};
        \node[Feuille](3)at(2.00,-2.00){};
        \node[Noeud](1)at(1.00,0.00){};
        \draw[Arete](0)--(1);
        \draw[Arete](2)--(1);
        \draw[Arete](3)--(1);
    \end{tikzpicture}}}
    \enspace + \enspace
    \Sbf_{
    \scalebox{.25}{\begin{tikzpicture}[yscale=.5,xscale=.9]
        \node[Feuille](0)at(0.00,-6.50){};
        \node[Feuille](10)at(6.00,-9.75){};
        \node[Feuille](11)at(7.00,-9.75){};
        \node[Feuille](12)at(8.00,-3.25){};
        \node[Feuille](2)at(1.00,-6.50){};
        \node[Feuille](3)at(2.00,-6.50){};
        \node[Feuille](5)at(3.00,-6.50){};
        \node[Feuille](7)at(4.00,-6.50){};
        \node[Feuille](8)at(5.00,-9.75){};
        \node[Noeud](1)at(1.00,-3.25){};
        \node[Noeud](4)at(5.00,0.00){};
        \node[Noeud](6)at(4.00,-3.25){};
        \node[Noeud](9)at(6.00,-6.50){};
        \draw[Arete](0)--(1);
        \draw[Arete](1)--(4);
        \draw[Arete](10)--(9);
        \draw[Arete](11)--(9);
        \draw[Arete](12)--(4);
        \draw[Arete](2)--(1);
        \draw[Arete](3)--(1);
        \draw[Arete](5)--(6);
        \draw[Arete](6)--(4);
        \draw[Arete](7)--(6);
        \draw[Arete](8)--(9);
        \draw[Arete](9)--(6);
    \end{tikzpicture}}}
    \otimes
    \Sbf_{\emptyset}.
\end{multline}
\medskip

This coproduct is similar to the one of the noncommutative Connes-Kreimer
Hopf algebra $\CK$ \cite{CK98}. The main difference between $\PvH(\PRF_\gamma)$
and $\CK$ lies in the fact that in a coproduct of $\CK$, the admissible
cuts can change the arity of some internal nodes; it is not the case in
$\PvH(\PRF_\gamma)$ because for any $T' \in \Adm(T)$, any internal node
$x$ of $T'$ has the same arity as it has in~$T$.
\medskip

\subsubsection{Dimensions}
The series of the algebraic generators of
$\PvH(\PRF_\gamma)$ is
\begin{equation}
    G(t) :=
    \sum_{n \geq 1} \frac{1}{n\gamma+1}\binom{n(\gamma + 1)}{n} t^n
\end{equation}
since its coefficients are the Fuss-Catalan numbers, counting planar
rooted trees with $n$ internal nodes of arity $\gamma + 1$. Since
$\PvH(\PRF_\gamma)$ is free as an algebra, its Hilbert series is
$H(t) := \frac{1}{1 - G(t)}$.
\medskip

The first dimensions of $\PvH(\PRF_1)$ are
\begin{equation} \label{equ:dim_AHC_PRF_1}
    1, 1, 3, 10, 35, 126, 462, 1716, 6435, 24310, 92378,
\end{equation}
and those of $\PvH(\PRF_2)$ are
\begin{equation}
    1, 4, 19, 98, 531, 2974, 17060, 99658, 590563, 3540464, 21430267.
\end{equation}
The first sequence is listed in \cite{Slo} as Sequence \Sloane{A001700}
and the second as Sequence \Sloane{A047099}.
\medskip

\subsubsection{PRO of general forests}
We denote by $\PRF_\infty$ the free PRO generated by
$G := \sqcup_{n \geq 1} G(n, 1) := \sqcup_{n \geq 1} \{\La_n\}$. Any
prograph $x$ of $\PRF_\infty$ can be seen as a planar forest of planar rooted
trees. Since the reduced elements of $\PRF_\infty$ have no wire, they are
encoded by forests of nonempty trees. Observe that for any nonnegative
integer $\gamma$, $\PRF_\gamma$ is a sub-PRO of $\PRF_\infty$.
\medskip

\subsubsection{Hopf algebra}
By Theorem \ref{thm:PRO_vers_AHC_bigebre}, $\PvH(\PRF_\infty)$ is a
bialgebra. By Proposition \ref{prop:PRO_vers_AHC_generation_liberte}, as
an algebra, $\PvH(\PRF_\infty)$ is freely generated by the $\Sbf_T$,
where the $T$ are nonempty planar rooted trees. Its bases are indexed by
planar forests of such trees. Besides, by
Proposition \ref{prop:sous_generateurs_libre_donne_quotient},
since $\PRF_\gamma$ is generated by a subset of the generators of
$\PRF_\infty$, $\PvH(\PRF_\gamma)$ is a bialgebra quotient of
$\PvH(\PRF_\infty)$. Moreover, the coproduct of $\PvH(\PRF_\infty)$
satisfies \eqref{equ:coproduit_forets}.
\medskip

To turn $\PvH(\PRF_\infty)$ into a combinatorial Hopf algebra, we cannot
consider the grading $\omega$ defined by $\omega(\La_n) := 1$ because
there would be infinitely many elements of degree $1$. Therefore, we
consider on $\PvH(\PRF_\infty)$ the grading $\omega$ defined by
$\omega(\La_n) := n$. In this way, the degree of a basis element
$\Sbf_F$ is the number of edges of the forest $F$. By
Proposition \ref{prop:PRO_vers_AHC_graduation}, $\PvH(\PRF_\infty)$
is a combinatorial Hopf algebra.
\medskip

\subsubsection{Dimensions}
The series of the algebraic generators of $\PvH(\PRF_\infty)$ is
\begin{equation}
    G(t) :=
    \sum_{n \geq 1} \frac{1}{n+1}\binom{2n}{n} t^n
\end{equation}
since its coefficients are the Catalan numbers, counting planar rooted
trees with $n$ edges. As $\PvH(\PRF_\infty)$ is free as an algebra,
its Hilbert series is
\begin{equation}
    H(t) := \frac{1}{1 - G(t)} =
    1 + \sum_{n \geq 1} \frac{1}{2}\binom{2n}{n} t^n.
\end{equation}
The dimensions of $\PvH(\PRF_\infty)$ are then the same as the
dimensions of $\PvH(\PRF_1)$ (see \eqref{equ:dim_AHC_PRF_1}).
\medskip

\subsection{The Faà di Bruno algebra and its deformations}
We shall give here a method to construct the Hopf algebras $\FdBNC_\gamma$
of Foissy \cite{Foi08} from our construction $\PvH$ in the case where
$\gamma$ is a nonnegative integer.
\medskip

\subsubsection{Associative PRO}
Let $\gamma$ be a nonnegative integer and $\As_\gamma$ be the quotient
of $\PRF_\gamma$ by the finest congruence $\equiv$ satisfying
\begin{equation}
    \begin{split}\scalebox{.25}{\begin{tikzpicture}
        \node[Feuille](S1)at(0,0){};
        \node[Operateur](N1)at(0,-2){\begin{math}\La\end{math}};
        \node[Operateur](N2)at(0,-5){\begin{math}\La\end{math}};
        \node[Feuille](E1)at(-5,-7){};
        \node[Feuille](E2)at(-3,-7){};
        \node[Feuille](E3)at(-1,-7){};
        \node[Feuille](E4)at(1,-7){};
        \node[Feuille](E5)at(3,-7){};
        \node[Feuille](E6)at(5,-7){};
        \draw[Arete](N1)--(S1);
        \draw[Arete](N1)--(N2);
        \draw[Arete](N1)--(E1);
        \draw[Arete](N1)--(E2);
        \draw[Arete](N2)--(E3);
        \draw[Arete](N2)--(E4);
        \draw[Arete](N1)--(E5);
        \draw[Arete](N1)--(E6);
        \node[right of=E3,node distance=1cm]
            {\scalebox{2}{\begin{math}\Huge \dots\end{math}}};
        \node(k1)[left of=E2,node distance=1cm]
            {\scalebox{2}{\begin{math}\Huge \dots\end{math}}};
        \node[below of=k1,node distance=.75cm]
            {\scalebox{3}{\begin{math}\Huge k_1\end{math}}};
        \node(k2)[right of=E5,node distance=1cm]
            {\scalebox{2}{\begin{math}\Huge \dots\end{math}}};
        \node[below of=k2,node distance=.75cm]
            {\scalebox{3}{\begin{math}\Huge k_2\end{math}}};
    \end{tikzpicture}}\end{split}
    \quad \equiv \quad
    \begin{split}\scalebox{.25}{\begin{tikzpicture}
        \node[Feuille](S1)at(0,0){};
        \node[Operateur](N1)at(0,-2){\begin{math}\La\end{math}};
        \node[Operateur](N2)at(0,-5){\begin{math}\La\end{math}};
        \node[Feuille](E1)at(-5,-7){};
        \node[Feuille](E2)at(-3,-7){};
        \node[Feuille](E3)at(-1,-7){};
        \node[Feuille](E4)at(1,-7){};
        \node[Feuille](E5)at(3,-7){};
        \node[Feuille](E6)at(5,-7){};
        \draw[Arete](N1)--(S1);
        \draw[Arete](N1)--(N2);
        \draw[Arete](N1)--(E1);
        \draw[Arete](N1)--(E2);
        \draw[Arete](N2)--(E3);
        \draw[Arete](N2)--(E4);
        \draw[Arete](N1)--(E5);
        \draw[Arete](N1)--(E6);
        \node[right of=E3,node distance=1cm]
            {\scalebox{2}{\begin{math}\Huge \dots\end{math}}};
        \node(k1)[left of=E2,node distance=1cm]
            {\scalebox{2}{\begin{math}\Huge \dots\end{math}}};
        \node[below of=k1,node distance=.75cm]
            {\scalebox{3}{\begin{math}\Huge \ell_1\end{math}}};
        \node(k2)[right of=E5,node distance=1cm]
            {\scalebox{2}{\begin{math}\Huge \dots\end{math}}};
        \node[below of=k2,node distance=.75cm]
            {\scalebox{3}{\begin{math}\Huge \ell_2\end{math}}};
    \end{tikzpicture}}\end{split}\,,
    \qquad k_1 + k_2 = \gamma, \ell_1 + \ell_2 = \gamma.
\end{equation}
\medskip

We can observe that $\As_\gamma$ is a stiff PRO because $\equiv$
satisfies \ref{item:propriete_bonnes_congruences_1} and
\ref{item:propriete_bonnes_congruences_2} and that $\As_0 = \PRF_0$.
Moreover, observe that, when $\gamma \geq 1$, there is in $\As_\gamma$
exactly one indecomposable element of arity $n\gamma + 1$ for any
$n \geq 0$. We denote by $\alpha_n$ this element. We consider on
$\As_\gamma$ the grading $\omega$ inherited from the one of
$\PRT_\gamma$. This grading is still well-defined in $\As_\gamma$
since any $\equiv$-equivalence class contains prographs of a same
degree and satisfies, for all $n \geq 0$, $\omega(\alpha_n) = n$.
Any element of $\As_\gamma$ is then a word
$\alpha_{k_1} \dots \alpha_{k_\ell}$ and can be encoded by a word of
nonnegative integers $k_1 \dots k_{\ell}$. Since the reduced elements
of $\As_\gamma$ have no wire, they are encoded by words of positive
integers.
\medskip

\subsubsection{Hopf algebra}
By Theorem \ref{thm:PRO_vers_AHC_congruence} and
Proposition \ref{prop:PRO_vers_AHC_graduation}, $\PvH(\As_\gamma)$
is a combinatorial Hopf algebra. As an algebra, $\PvH(\As_\gamma)$
is freely generated by the $\Tbf_n$, $n \geq 1$, and its bases are
indexed by words of positive integers where the degree of a basis
element $\Tbf_{k_1 \dots k_\ell}$ is $k_1 + \dots + k_\ell$.
\medskip

\subsubsection{Coproduct}
Since any element $\alpha_n$ of $\As_\gamma$ decomposes into
$\alpha_n = x \circ y$ if and only if $x = \alpha_k$ and
$y = \alpha_{i_1} \dots \alpha_{i_{k\gamma + 1}}$ with
$i_1 + \dots + i_{k\gamma + 1} = n - k$, by
Proposition \ref{prop:PRO_vers_AHC_congruence_coproduit}, for
any $n \geq 1$, the coproduct of $\PvH(\As_\gamma)$ expresses as
\begin{equation} \label{equ:coproduit_analogue_FdBNC_gamma}
    \Delta(\Tbf_n) =
        \sum_{k = 0}^n \Tbf_k
        \otimes
        \left(\sum_{i_1+\dots+ i_{k\gamma+1} = n - k}
        \Tbf_{i_1}\dots \Tbf_{i_{k\gamma+1}}\right),
\end{equation}
where $\Tbf_0$ is identified with the unity $\Tbf_\epsilon$ of $\PvH(\As_\gamma)$.
For instance, in $\PvH(\As_1)$, we have
\begin{multline}
    \Delta(\Tbf_3) =
        \Tbf_0 \otimes \Tbf_3 +
        \Tbf_1 \otimes (\Tbf_0\Tbf_2 + \Tbf_1\Tbf_1 + \Tbf_2\Tbf_0) \\
    + \Tbf_2 \otimes
        (\Tbf_0\Tbf_0\Tbf_1+\Tbf_0\Tbf_1\Tbf_0+\Tbf_1\Tbf_0\Tbf_0)
    + \Tbf_3 \otimes (\Tbf_0\Tbf_0\Tbf_0\Tbf_0) \\
    = \Tbf_\epsilon \otimes \Tbf_3 + 2\,\Tbf_1\otimes \Tbf_2 +
    \Tbf_1 \otimes \Tbf_{11} + 3\,\Tbf_2\otimes \Tbf_1 +
    \Tbf_3 \otimes \Tbf_\epsilon,
\end{multline}
and in $\PvH(\As_2)$, we have
\begin{multline}
    \Delta(\Tbf_3) =
    \Tbf_0 \otimes \Tbf_3
    + \Tbf_1 \otimes (\Tbf_0\Tbf_0\Tbf_2 + \Tbf_0\Tbf_2\Tbf_0
        + \Tbf_2\Tbf_0\Tbf_0 + \Tbf_0\Tbf_1\Tbf_1
        + \Tbf_1\Tbf_0\Tbf_1 + \Tbf_1\Tbf_1\Tbf_0) \\
    + \Tbf_2 \otimes (\Tbf_0\Tbf_0\Tbf_0\Tbf_0\Tbf_1
        + \Tbf_0\Tbf_0\Tbf_0\Tbf_1\Tbf_0
        + \Tbf_0\Tbf_0\Tbf_1\Tbf_0\Tbf_0
        + \Tbf_0\Tbf_1\Tbf_0\Tbf_0\Tbf_0
        + \Tbf_1\Tbf_0\Tbf_0\Tbf_0\Tbf_0) \\
    + \Tbf_3 \otimes \Tbf_0\Tbf_0\Tbf_0\Tbf_0\Tbf_0\Tbf_0\Tbf_0 \\
    =
    \Tbf_\epsilon \otimes \Tbf_3 + 3\,\Tbf_1 \otimes \Tbf_2
    + 3\,\Tbf_1 \otimes \Tbf_{11} + 5\,\Tbf_2 \otimes \Tbf_1
    + \Tbf_3 \otimes \Tbf_\epsilon.
\end{multline}
\medskip

\subsubsection{Isomorphism with the deformation of the noncommutative
    Faà di Bruno Hopf algebra}

\begin{Theoreme} \label{thm:construction_FdBNC_gamma}
    For any nonnegative integer $\gamma$, the Hopf algebra
    $\PvH(\As_\gamma)$ is the deformation of the noncommutative
    Faà di Bruno Hopf algebra $\FdBNC_\gamma$.
\end{Theoreme}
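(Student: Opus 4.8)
The plan is to exhibit an explicit Hopf algebra isomorphism $\phi \colon \PvH(\As_\gamma) \to \FdBNC_\gamma$ sending each generator $\Tbf_n$ to the generator $\Sbf_n$, for all $n \geq 1$. First I would check that $\phi$ is a graded algebra isomorphism. By the discussion preceding the statement, $\PvH(\As_\gamma)$ is freely generated as an algebra by the elements $\Tbf_n$, $n \geq 1$, with $\deg(\Tbf_n) = n$; and by definition $\FdBNC_\gamma$ is the free associative algebra on the $\Sbf_n$, $n \geq 1$, with $\deg(\Sbf_n) = n$. As both are free with exactly one generator in each positive degree, the assignment $\Tbf_n \mapsto \Sbf_n$ extends uniquely to a graded algebra isomorphism.

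It then remains to prove that $\phi$ commutes with the coproducts, that is, $(\phi \otimes \phi) \circ \Delta = \Delta_\gamma \circ \phi$. Since $\phi$ is an algebra morphism and both coproducts are morphisms of algebras, it is enough to verify this identity on the generators. Applying $\phi \otimes \phi$ to the coproduct formula \eqref{equ:coproduit_analogue_FdBNC_gamma} of $\PvH(\As_\gamma)$ yields
\begin{equation}
    (\phi \otimes \phi) \Delta(\Tbf_n) =
    \sum_{k = 0}^n \Sbf_k \otimes
    \left(\sum_{i_1 + \dots + i_{k\gamma + 1} = n - k}
    \Sbf_{i_1} \dots \Sbf_{i_{k\gamma + 1}}\right),
\end{equation}
while the coproduct \eqref{equ:coproduit_delta_gamma_non_commutatif} of $\FdBNC_\gamma$ reads
\begin{equation}
    \Delta_\gamma(\Sbf_n) =
    \sum_{k = 0}^n \Sbf_k \otimes \Sbf_{n - k}((k\gamma + 1)A).
\end{equation}

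The heart of the proof is thus to identify the inner sum with the alphabet transformation $\Sbf_{n-k}((k\gamma + 1)A)$. Since $\gamma$ is a nonnegative integer, the exponent $k\gamma + 1$ is a positive integer, so that, directly from the definition of $\Sbf_m(\alpha A)$ as the coefficient of $t^m$ in $\left(\sum_{i \geq 0} \Sbf_i t^i\right)^\alpha$, expanding the $(k\gamma + 1)$-th power of the generating series and extracting the coefficient of $t^{n - k}$ gives
\begin{equation}
    \Sbf_{n - k}((k\gamma + 1)A) =
    \sum_{i_1 + \dots + i_{k\gamma + 1} = n - k}
    \Sbf_{i_1} \dots \Sbf_{i_{k\gamma + 1}}.
\end{equation}
Substituting this equality shows that the two coproduct expressions coincide, so $\phi$ is a bialgebra isomorphism; being connected and graded, both structures are Hopf algebras and $\phi$ automatically preserves the antipode. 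I expect the only delicate point to be this unfolding of the alphabet transformation, which rests on the integrality of $k\gamma + 1$ (guaranteeing a finite expansion rather than a binomial series) and is otherwise a direct comparison between the combinatorial coproduct of \eqref{equ:coproduit_analogue_FdBNC_gamma} and the power-series definition of Foissy's coproduct; the remainder of the argument is formal.
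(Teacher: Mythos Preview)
Your proof is correct and follows essentially the same approach as the paper. The paper packages the computation through the generating series $\sigma_1 := \sum_{n\geq 0} \Tbf_n$, showing $\Delta(\sigma_1) = \sum_{k\geq 0} \Tbf_k \otimes \sigma_1^{k\gamma+1}$ and then extracting homogeneous components, whereas you work directly degree by degree and make explicit the identification $\Sbf_{n-k}((k\gamma+1)A) = \sum_{i_1+\dots+i_{k\gamma+1}=n-k} \Sbf_{i_1}\cdots\Sbf_{i_{k\gamma+1}}$; these are two presentations of the same comparison.
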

\begin{proof}
    Let us set $\sigma_1 := \sum_{n\geq 0} \Tbf_n$. By
    \eqref{equ:coproduit_analogue_FdBNC_gamma}, we have
    \begin{align}
        \Delta(\sigma_1) & =
            \sum_{n\geq 0}\sum_{k=0}^n \Tbf_k \otimes
            \left(\sum_{i_1+\dots+ i_{k\gamma+1} = n-k}
            \Tbf_{i_1}\dots \Tbf_{i_{k\gamma+1}}\right) \\
        & = \sum_{k\geq 0}\Tbf_k\otimes \left(\sum_{n\geq k} \enspace
            \sum_{i_1+\dots+ i_{k\gamma+1} = n-k}
            \Tbf_{i_1}\dots \Tbf_{i_{k\gamma+1}}\right)\\
        & = \sum_{k\geq 0}\Tbf_k\otimes
        \left(\sum_{i_1+\dots+ i_{k\gamma+1}\geq 0}
        \Tbf_{i_1}\dots \Tbf_{i_{k\gamma+1}}\right)\\
        & = \sum_{k\geq 0}
            \Tbf_k\otimes
            \left(\sum_{i\geq 0}\Tbf_i\right)^{k\gamma+1}\\
        & = \sum_{k\geq 0} \Tbf_k\otimes \sigma_1^{k\gamma+1},
    \end{align}
    showing that for any $n \geq 0$, $\Delta(\Tbf_n)$ is the
    homogeneous component of degree $n$ in
    $\sum_{k\geq 0} \Tbf_k\otimes \sigma_1^{k\gamma+1}$.
    \smallskip

    Therefore, by the definition of the coproduct of $\FdBNC_\gamma$
    (see \eqref{equ:coproduit_delta_gamma_non_commutatif}),
    the map $\Tbf_n\mapsto \Sbf_n$ from $\PvH(\As_\gamma)$ to
    $\FdBNC_\gamma$ is an isomorphism of Hopf algebras.
\end{proof}
\medskip

\subsection{Hopf algebra of forests of bitrees}
Let us describe here a general construction on PROs. If $G$ is a bigraded
set of the form $G = \sqcup_{p \geq 1} \sqcup_{q \geq 1} G(p, q)$, we
denote by $G^\Op$ the bigraded set defined by
\begin{equation}
    G^\Op(p, q) := G(q, p), \qquad p, q \geq 1.
\end{equation}
From a geometrical point of view, any elementary prograph over $G^\Op$
is obtained by reversing from bottom to top an elementary prograph over
$G$. We moreover denote by $\Rev : \Free(G^\Op) \to \Free(G)$ the bijection
sending any prograph $x$ of $\Free(G^\Op)$ to the prograph $\Rev(x)$
of $\Free(G)$ obtained by reversing $x$ from bottom to top.
\medskip

Now, given a PRO $\Pca := \Free(G)/_\equiv$, we define $\Sat(\Pca)$ as
the PRO
\begin{equation}
    \Sat(\Pca) := \Free\left(G \sqcup G^\Op\right)/_{\cong},
\end{equation}
where $\cong$ is the finest congruence of
$\Free(G \sqcup G^\Op)$ satisfying
\begin{equation}
    x \cong y
    \quad \mbox{if }
    (x, y \in \Free(G) \mbox{ and } x \equiv y)
    \enspace \mbox{ or } \enspace
    (x, y \in \Free(G^\Op) \mbox{ and } \Rev(x) \equiv \Rev(y)).
\end{equation}
Notice that in this definition, we consider $\Free(G)$ and $\Free(G^\Op)$
as sub-PROs of $\Free(G\sqcup G^\Op)$ in an obvious way.
Notice also that if $\Pca$ is a free PRO $\Free(G)$, then the congruence
$\equiv$ is trivial, so that $\cong$ is also trivial, and
$\Sat(\Pca) = \Free(G\sqcup G^\Op)$. Besides, as an other immediate
property of this construction, remark that when $\Pca$ is a stiff PRO,
the congruence $\cong$ satisfies~\ref{item:propriete_bonnes_congruences_1}
and~\ref{item:propriete_bonnes_congruences_2}, and then, $\Sat(\Pca)$
is a stiff PRO.
\medskip

We shall present here two Hopf algebras coming from the construction
$\Sat$ applied on $\PRF_\gamma$ and~$\As_\gamma$.
\medskip

\subsubsection{PRO of forests of bitrees}
Let $\gamma$ be a nonnegative integer and $\FBT_\gamma$ be the free PRO
generated by
$G := G(\gamma+1, 1) \sqcup G(1, \gamma+1)$ where $G(\gamma+1, 1) := \{\La\}$
and $G(1, \gamma+1) := \{\Lb\}$, with the grading $\omega$ defined by
$\omega(\La) := \omega(\Lb) := 1$. One has $\Sat(\PRF_\gamma) = \FBT_\gamma$.
Any prograph $x$ of $\FBT_\gamma$ can be seen as a forest of
{\em $\gamma$-bitrees}, that are labeled planar trees where internal
nodes labeled by $\La$ have $\gamma+1$ children and one parent, and the
internal nodes labeled by $\Lb$ have one child and $\gamma+1$ parents.
Since the reduced elements of $\FBT_\gamma$ have no wire, they are encoded
by forests of nonempty $\gamma$-bitrees.
\medskip

\subsubsection{Hopf algebra}
By Theorem \ref{thm:PRO_vers_AHC_bigebre} and
Proposition \ref{prop:PRO_vers_AHC_graduation}, $\PvH(\FBT_\gamma)$ is
a combinatorial Hopf algebra. By
Proposition \ref{prop:PRO_vers_AHC_generation_liberte}, as an algebra,
$\PvH(\FBT_\gamma)$ is freely generated by the $\Sbf_T$, where the $T$
are nonempty $\gamma$-bitrees. Its bases are indexed by planar forests
of such bitrees where the degree of a basis element $\Sbf_F$ is the
total number of internal nodes in the bitrees of $F$. Moreover, by
Proposition \ref{prop:sous_generateurs_libre_donne_quotient}, since
$\PRF_\gamma$ is generated by a subset of the generators of $\FBT_\gamma$,
$\PvH(\PRF_\gamma)$ is a quotient bialgebra of $\PvH(\FBT_\gamma)$.
\medskip

\subsubsection{Coproduct}
The coproduct of $\PvH(\FBT_\gamma)$ can be described, like the one
of $\CK$ on forests, by means of admissible cuts on forests of
$\gamma$-bitrees. We have for instance
\begin{multline}
    \Delta \Sbf_{
    \scalebox{.25}{\begin{tikzpicture}[yscale=.7]
        \node[Feuille](S1)at(0,1){};
        \node[Feuille](E1)at(-2,-7){};
        \node[Feuille](E2)at(1.5,-7){};
        \node[Feuille](E3)at(2.5,-7){};
        \node[Noeud](N1)at(0,-1){};
        \node[Noeud](N2)at(1,-3){};
        \node[Noeud,Marque1](N3)at(-2,-5){};
        \node[Noeud](N4)at(2,-5){};
        \draw[Arete](N1)--(S1);
        \draw[Arete](N3)--(E1);
        \draw[Arete](N4)--(E2);
        \draw[Arete](N4)--(E3);
        \draw[Arete](N1)--(N3);
        \draw[Arete](N1)--(N2);
        \draw[Arete](N2)--(N3);
        \draw[Arete](N2)--(N4);
    \end{tikzpicture}}}
    \enspace = \enspace
    \Sbf_{\emptyset} \otimes
    \Sbf_{
    \scalebox{.25}{\begin{tikzpicture}[yscale=.7]
        \node[Feuille](S1)at(0,1){};
        \node[Feuille](E1)at(-2,-7){};
        \node[Feuille](E2)at(1.5,-7){};
        \node[Feuille](E3)at(2.5,-7){};
        \node[Noeud](N1)at(0,-1){};
        \node[Noeud](N2)at(1,-3){};
        \node[Noeud,Marque1](N3)at(-2,-5){};
        \node[Noeud](N4)at(2,-5){};
        \draw[Arete](N1)--(S1);
        \draw[Arete](N3)--(E1);
        \draw[Arete](N4)--(E2);
        \draw[Arete](N4)--(E3);
        \draw[Arete](N1)--(N3);
        \draw[Arete](N1)--(N2);
        \draw[Arete](N2)--(N3);
        \draw[Arete](N2)--(N4);
    \end{tikzpicture}}}
    \enspace + \enspace
    \Sbf_{
    \scalebox{.25}{\begin{tikzpicture}[yscale=.7]
        \node[Feuille](S1)at(0,1){};
        \node[Feuille](E1)at(-.5,-3){};
        \node[Feuille](E2)at(.5,-3){};
        \node[Noeud](N1)at(0,-1){};
        \draw[Arete](N1)--(S1);
        \draw[Arete](N1)--(E1);
        \draw[Arete](N1)--(E2);
    \end{tikzpicture}}}
    \otimes \Sbf_{
    \scalebox{.25}{\begin{tikzpicture}[yscale=.7]
        \node[Feuille](S1)at(-2,-3){};
        \node[Feuille](S2)at(1,-1){};
        \node[Feuille](E1)at(-1,-7){};
        \node[Feuille](E2)at(1.5,-7){};
        \node[Feuille](E3)at(2.5,-7){};
        \node[Noeud](N2)at(1,-3){};
        \node[Noeud,Marque1](N3)at(-1,-5){};
        \node[Noeud](N4)at(2,-5){};
        \draw[Arete](N3)--(S1);
        \draw[Arete](N2)--(S2);
        \draw[Arete](N3)--(E1);
        \draw[Arete](N4)--(E2);
        \draw[Arete](N4)--(E3);
        \draw[Arete](N2)--(N3);
        \draw[Arete](N2)--(N4);
    \end{tikzpicture}}}
    \enspace + \enspace
    \Sbf_{
    \scalebox{.25}{\begin{tikzpicture}[yscale=.7]
        \node[Feuille](S2)at(1,-1){};
        \node[Feuille](E1)at(0,-5){};
        \node[Feuille](E2)at(1.5,-7){};
        \node[Feuille](E3)at(2.5,-7){};
        \node[Noeud](N2)at(1,-3){};
        \node[Noeud](N4)at(2,-5){};
        \draw[Arete](N2)--(S2);
        \draw[Arete](N2)--(E1);
        \draw[Arete](N4)--(E2);
        \draw[Arete](N4)--(E3);
        \draw[Arete](N2)--(N4);
    \end{tikzpicture}}}
    \otimes
    \Sbf_{
    \scalebox{.25}{\begin{tikzpicture}[yscale=.7]
        \node[Feuille](SS1)at(.5,4){};
        \node[Feuille](SS2)at(-.5,4){};
        \node[Feuille](EE1)at(0,0){};
        \node[Noeud,Marque1](NN1)at(0,2){};
        \draw[Arete](NN1)--(SS1);
        \draw[Arete](NN1)--(SS2);
        \draw[Arete](NN1)--(EE1);
        \node[Feuille](S1)at(2,4){};
        \node[Feuille](E1)at(1.5,0){};
        \node[Feuille](E2)at(2.5,0){};
        \node[Noeud](N1)at(2,2){};
        \draw[Arete](N1)--(S1);
        \draw[Arete](N1)--(E1);
        \draw[Arete](N1)--(E2);
    \end{tikzpicture}}} \\
    \enspace + \enspace
    \Sbf_{
    \scalebox{.25}{\begin{tikzpicture}[yscale=.7]
        \node[Feuille](S1)at(0,1){};
        \node[Feuille](E1)at(-2,-7){};
        \node[Feuille](E2)at(1.5,-5){};
        \node[Noeud](N1)at(0,-1){};
        \node[Noeud](N2)at(1,-3){};
        \node[Noeud,Marque1](N3)at(-2,-5){};
        \draw[Arete](N1)--(S1);
        \draw[Arete](N3)--(E1);
        \draw[Arete](N2)--(E2);
        \draw[Arete](N1)--(N3);
        \draw[Arete](N1)--(N2);
        \draw[Arete](N2)--(N3);
    \end{tikzpicture}}}
    \otimes \Sbf_{
    \scalebox{.25}{\begin{tikzpicture}[yscale=.7]
        \node[Feuille](S1)at(0,1){};
        \node[Feuille](E1)at(-.5,-3){};
        \node[Feuille](E2)at(.5,-3){};
        \node[Noeud](N1)at(0,-1){};
        \draw[Arete](N1)--(S1);
        \draw[Arete](N1)--(E1);
        \draw[Arete](N1)--(E2);
    \end{tikzpicture}}}
    \enspace + \enspace
    \Sbf_{
    \scalebox{.25}{\begin{tikzpicture}[yscale=.7]
        \node[Feuille](S1)at(0,1){};
        \node[Feuille](E1)at(-1,-3){};
        \node[Feuille](E2)at(0,-5){};
        \node[Feuille](E3)at(1.5,-7){};
        \node[Feuille](E4)at(2.5,-7){};
        \node[Noeud](N1)at(0,-1){};
        \node[Noeud](N2)at(1,-3){};
        \node[Noeud](N4)at(2,-5){};
        \draw[Arete](N1)--(S1);
        \draw[Arete](N1)--(E1);
        \draw[Arete](N2)--(E2);
        \draw[Arete](N4)--(E3);
        \draw[Arete](N4)--(E4);
        \draw[Arete](N1)--(N2);
        \draw[Arete](N2)--(N4);
    \end{tikzpicture}}}
    \otimes \Sbf_{
    \scalebox{.25}{\begin{tikzpicture}[yscale=.7]
        \node[Feuille](SS1)at(.5,4){};
        \node[Feuille](SS2)at(-.5,4){};
        \node[Feuille](EE1)at(0,0){};
        \node[Noeud,Marque1](NN1)at(0,2){};
        \draw[Arete](NN1)--(SS1);
        \draw[Arete](NN1)--(SS2);
        \draw[Arete](NN1)--(EE1);
    \end{tikzpicture}}}
    \enspace + \enspace
    \Sbf_{
    \scalebox{.25}{\begin{tikzpicture}[yscale=.7]
        \node[Feuille](S1)at(0,1){};
        \node[Feuille](E1)at(-2,-7){};
        \node[Feuille](E2)at(1.5,-7){};
        \node[Feuille](E3)at(2.5,-7){};
        \node[Noeud](N1)at(0,-1){};
        \node[Noeud](N2)at(1,-3){};
        \node[Noeud,Marque1](N3)at(-2,-5){};
        \node[Noeud](N4)at(2,-5){};
        \draw[Arete](N1)--(S1);
        \draw[Arete](N3)--(E1);
        \draw[Arete](N4)--(E2);
        \draw[Arete](N4)--(E3);
        \draw[Arete](N1)--(N3);
        \draw[Arete](N1)--(N2);
        \draw[Arete](N2)--(N3);
        \draw[Arete](N2)--(N4);
    \end{tikzpicture}}} \otimes \Sbf_{\emptyset}.
\end{multline}

\subsubsection{Dimensions}
We only know the dimensions of $\PvH(\FBT_\gamma)$ when $\gamma = 0$.
In this case, $0$-bitrees of size $n$ are linear trees and can hence be seen
as words of length $n$ on the alphabet $\{\La, \Lb\}$. Therefore, as
$\PvH(\FBT_\gamma)$ is free as an algebra, the bases of $\PvH(\FBT_0)$
are indexed by multiwords on $\{\La, \Lb\}$ and its Hilbert series is
\begin{equation}
    H(t) := 1+\sum_{n\geq 1}2^{2n-1}t^n.
\end{equation}
\medskip

\subsubsection{PRO of biassociative operators and its Hopf algebra}
Let $\BAs_\gamma$ be the quotient of $\FBT_\gamma$ by the finest congruence
$\equiv$ satisfying
\begin{equation}
    \begin{split}\scalebox{.25}{\begin{tikzpicture}
        \node[Feuille](S1)at(0,0){};
        \node[Operateur](N1)at(0,-2){\begin{math}\La\end{math}};
        \node[Operateur](N2)at(0,-5){\begin{math}\La\end{math}};
        \node[Feuille](E1)at(-5,-7){};
        \node[Feuille](E2)at(-3,-7){};
        \node[Feuille](E3)at(-1,-7){};
        \node[Feuille](E4)at(1,-7){};
        \node[Feuille](E5)at(3,-7){};
        \node[Feuille](E6)at(5,-7){};
        \draw[Arete](N1)--(S1);
        \draw[Arete](N1)--(N2);
        \draw[Arete](N1)--(E1);
        \draw[Arete](N1)--(E2);
        \draw[Arete](N2)--(E3);
        \draw[Arete](N2)--(E4);
        \draw[Arete](N1)--(E5);
        \draw[Arete](N1)--(E6);
        \node[right of=E3,node distance=1cm]
            {\scalebox{2}{\begin{math}\Huge \dots\end{math}}};
        \node(k1)[left of=E2,node distance=1cm]
            {\scalebox{2}{\begin{math}\Huge \dots\end{math}}};
        \node[below of=k1,node distance=.75cm]
            {\scalebox{3}{\begin{math}\Huge k_1\end{math}}};
        \node(k2)[right of=E5,node distance=1cm]
            {\scalebox{2}{\begin{math}\Huge \dots\end{math}}};
        \node[below of=k2,node distance=.75cm]
            {\scalebox{3}{\begin{math}\Huge k_2\end{math}}};
    \end{tikzpicture}}\end{split}
    \quad \equiv \quad
    \begin{split}\scalebox{.25}{\begin{tikzpicture}
        \node[Feuille](S1)at(0,0){};
        \node[Operateur](N1)at(0,-2){\begin{math}\La\end{math}};
        \node[Operateur](N2)at(0,-5){\begin{math}\La\end{math}};
        \node[Feuille](E1)at(-5,-7){};
        \node[Feuille](E2)at(-3,-7){};
        \node[Feuille](E3)at(-1,-7){};
        \node[Feuille](E4)at(1,-7){};
        \node[Feuille](E5)at(3,-7){};
        \node[Feuille](E6)at(5,-7){};
        \draw[Arete](N1)--(S1);
        \draw[Arete](N1)--(N2);
        \draw[Arete](N1)--(E1);
        \draw[Arete](N1)--(E2);
        \draw[Arete](N2)--(E3);
        \draw[Arete](N2)--(E4);
        \draw[Arete](N1)--(E5);
        \draw[Arete](N1)--(E6);
        \node[right of=E3,node distance=1cm]
            {\scalebox{2}{\begin{math}\Huge \dots\end{math}}};
        \node(k1)[left of=E2,node distance=1cm]
            {\scalebox{2}{\begin{math}\Huge \dots\end{math}}};
        \node[below of=k1,node distance=.75cm]
            {\scalebox{3}{\begin{math}\Huge \ell_1\end{math}}};
        \node(k2)[right of=E5,node distance=1cm]
            {\scalebox{2}{\begin{math}\Huge \dots\end{math}}};
        \node[below of=k2,node distance=.75cm]
            {\scalebox{3}{\begin{math}\Huge \ell_2\end{math}}};
    \end{tikzpicture}}\end{split}\,,
    \qquad k_1 + k_2 = \gamma, \ell_1 + \ell_2 = \gamma,
\end{equation}
and
\begin{equation}
    \begin{split}\scalebox{.25}{\begin{tikzpicture}
        \node[Feuille](S1)at(0,0){};
        \node[Operateur,Marque1](N1)at(0,2){\begin{math}\Lb\end{math}};
        \node[Operateur,Marque1](N2)at(0,5){\begin{math}\Lb\end{math}};
        \node[Feuille](E1)at(-5,7){};
        \node[Feuille](E2)at(-3,7){};
        \node[Feuille](E3)at(-1,7){};
        \node[Feuille](E4)at(1,7){};
        \node[Feuille](E5)at(3,7){};
        \node[Feuille](E6)at(5,7){};
        \draw[Arete](N1)--(S1);
        \draw[Arete](N1)--(N2);
        \draw[Arete](N1)--(E1);
        \draw[Arete](N1)--(E2);
        \draw[Arete](N2)--(E3);
        \draw[Arete](N2)--(E4);
        \draw[Arete](N1)--(E5);
        \draw[Arete](N1)--(E6);
        \node[right of=E3,node distance=1cm]
            {\scalebox{2}{\begin{math}\Huge \dots\end{math}}};
        \node(k1)[left of=E2,node distance=1cm]
            {\scalebox{2}{\begin{math}\Huge \dots\end{math}}};
        \node[above of=k1,node distance=.75cm]
            {\scalebox{3}{\begin{math}\Huge k_1\end{math}}};
        \node(k2)[right of=E5,node distance=1cm]
            {\scalebox{2}{\begin{math}\Huge \dots\end{math}}};
        \node[above of=k2,node distance=.75cm]
            {\scalebox{3}{\begin{math}\Huge k_2\end{math}}};
    \end{tikzpicture}}\end{split}
    \quad \equiv \quad
    \begin{split}\scalebox{.25}{\begin{tikzpicture}
        \node[Feuille](S1)at(0,0){};
        \node[Operateur,Marque1](N1)at(0,2){\begin{math}\Lb\end{math}};
        \node[Operateur,Marque1](N2)at(0,5){\begin{math}\Lb\end{math}};
        \node[Feuille](E1)at(-5,7){};
        \node[Feuille](E2)at(-3,7){};
        \node[Feuille](E3)at(-1,7){};
        \node[Feuille](E4)at(1,7){};
        \node[Feuille](E5)at(3,7){};
        \node[Feuille](E6)at(5,7){};
        \draw[Arete](N1)--(S1);
        \draw[Arete](N1)--(N2);
        \draw[Arete](N1)--(E1);
        \draw[Arete](N1)--(E2);
        \draw[Arete](N2)--(E3);
        \draw[Arete](N2)--(E4);
        \draw[Arete](N1)--(E5);
        \draw[Arete](N1)--(E6);
        \node[right of=E3,node distance=1cm]
            {\scalebox{2}{\begin{math}\Huge \dots\end{math}}};
        \node(k1)[left of=E2,node distance=1cm]
            {\scalebox{2}{\begin{math}\Huge \dots\end{math}}};
        \node[above of=k1,node distance=.75cm]
            {\scalebox{3}{\begin{math} \Huge \ell_1\end{math}}};
        \node(k2)[right of=E5,node distance=1cm]
            {\scalebox{2}{\begin{math}\Huge \dots\end{math}}};
        \node[above of=k2,node distance=.75cm]
            {\scalebox{3}{\begin{math}\Huge \ell_2\end{math}}};
    \end{tikzpicture}}\end{split}\,,
    \qquad k_1 + k_2 = \gamma, \ell_1 + \ell_2 = \gamma.
\end{equation}
\medskip

We can observe that $\BAs_\gamma$ is a stiff PRO because $\equiv$
satisfies \ref{item:propriete_bonnes_congruences_1} and
\ref{item:propriete_bonnes_congruences_2}. Notice that
$\Sat(\As_\gamma) = \BAs_\gamma$ and $\BAs_0 = \FBT_0$.
We consider on $\BAs_\gamma$ the grading $\omega$ inherited from
the one of $\FBT_\gamma$. This grading is still well-defined in
$\BAs_\gamma$ since any $\equiv$-equivalence class contains prographs
of a same degree. Notice that $\BAs_1$ is very similar to the PRO
governing bialgebras (see \cite{Mar08}). Indeed, it only lacks in
$\BAs_1$ the usual compatibility relation between its two generators.
Notice also that the PRO governing bialgebras is not a stiff PRO.
\medskip

By Theorem \ref{thm:PRO_vers_AHC_congruence} and
Proposition \ref{prop:PRO_vers_AHC_graduation}, $\PvH(\BAs_\gamma)$
is then a combinatorial Hopf algebra. Moreover, we can observe that
$\PvH(\As_\gamma)$ is a quotient bialgebra of $\PvH(\BAs_\gamma)$.
\medskip

\subsection{Hopf algebra of heaps of pieces}
We present here the construction of a Hopf algebra depending on
a nonnegative integer $\gamma$, whose bases are indexed by heaps
of pieces.
\medskip

\subsubsection{PRO of heaps of pieces}
Let $\gamma$ be a nonnegative integer and $\Heap_\gamma$ be the free PRO
generated by $G := G(\gamma + 1, \gamma + 1) := \{\La\}$, with the
grading $\omega$ defined by $\omega(\La) := 1$. Any prograph $x$ of
$\Heap_\gamma$ can be seen as a heap of pieces of width $\gamma + 1$
(see \cite{Vie86} for some theory about these objects). For instance,
the prograph
\begin{equation}
    \begin{split}\scalebox{.25}{\begin{tikzpicture}
        \node[Feuille](S1)at(0,0){};
        \node[Feuille](S2)at(2,0){};
        \node[Feuille](S3)at(3,0){};
        \node[Feuille](S4)at(4,0){};
        \node[Feuille](S5)at(6,0){};
        \node[Feuille](S6)at(7,0){};
        \node[Feuille](S7)at(9,0){};
        \node[Feuille](S8)at(10,0){};
        \node[Feuille](S9)at(11,0){};
        \node[Feuille](S10)at(14,0){};
        \node[Feuille](S11)at(15,0){};
        \node[Feuille](S12)at(16,0){};
        \node[Feuille](S13)at(18,0){};
        \node[Feuille](S14)at(19,0){};
        \node[Feuille](S10a)at(11.5,0){};
        \node[Feuille](S10b)at(12.5,0){};
        \node[Feuille](S10c)at(13.5,0){};
        \node[Operateur,minimum width=3cm](N1)at(1,-7){\begin{math}\La\end{math}};
        \node[Operateur,minimum width=3cm](N2)at(3,-2){\begin{math}\La\end{math}};
        \node[Operateur,minimum width=3cm](N3)at(6,-7){\begin{math}\La\end{math}};
        \node[Operateur,minimum width=3cm](N4)at(10,-7){\begin{math}\La\end{math}};
        \node[Operateur,minimum width=3cm](N5)at(15,-5){\begin{math}\La\end{math}};
        \node[Operateur,minimum width=3cm](N6)at(18,-7){\begin{math}\La\end{math}};
        \node[Operateur,minimum width=3cm](N7)at(12.5,-2){\begin{math}\La\end{math}};
        \node[Feuille](E1)at(0,-9){};
        \node[Feuille](E2)at(1,-9){};
        \node[Feuille](E3)at(2,-9){};
        \node[Feuille](E4)at(5,-9){};
        \node[Feuille](E5)at(6,-9){};
        \node[Feuille](E6)at(7,-9){};
        \node[Feuille](E7)at(9,-9){};
        \node[Feuille](E8)at(10,-9){};
        \node[Feuille](E9)at(11,-9){};
        \node[Feuille](E10)at(14,-9){};
        \node[Feuille](E11)at(16,-9){};
        \node[Feuille](E12)at(16,-9){};
        \node[Feuille](E13)at(17,-9){};
        \node[Feuille](E14)at(18,-9){};
        \node[Feuille](E15)at(19,-9){};
        \node[Feuille](E10a)at(12.5,-9){};
        \draw[Arete](N1)--(S1);
        \draw[Arete](N2)--(S2);
        \draw[Arete](N2)--(S3);
        \draw[Arete](N2)--(S4);
        \draw[Arete](N3)--(S5);
        \draw[Arete](N3)--(S6);
        \draw[Arete](N4)--(S7);
        \draw[Arete](N4)--(S8);
        \draw[Arete](N4)--(S9);
        \draw[Arete](N5)--(S10);
        \draw[Arete](N5)--(S11);
        \draw[Arete](N5)--(S12);
        \draw[Arete](N6)--(S13);
        \draw[Arete](N6)--(S14);
        \draw[Arete](N7)--(S10a);
        \draw[Arete](N7)--(S10b);
        \draw[Arete](N7)--(S10c);
        \draw[Arete](N1)--(E1);
        \draw[Arete](N1)--(E2);
        \draw[Arete](N1)--(E3);
        \draw[Arete](N3)--(E4);
        \draw[Arete](N3)--(E5);
        \draw[Arete](N3)--(E6);
        \draw[Arete](N4)--(E7);
        \draw[Arete](N4)--(E8);
        \draw[Arete](N4)--(E9);
        \draw[Arete](N5)--(E10);
        \draw[Arete](N5)--(E11);
        \draw[Arete](N5)--(E12);
        \draw[Arete](N6)--(E13);
        \draw[Arete](N6)--(E14);
        \draw[Arete](N6)--(E15);
        \draw[Arete](N7)--(E10a);
        \draw[Arete](N2)--(N3);
        \draw[Arete](N5)--(N6);
        \draw[Arete](N7)--(N4);
        \draw[Arete](N7)--(N5);
        \draw[Arete](N1)edge[bend left=20] node[]{}(N2);
        \draw[Arete](N1)edge[bend right=20] node[]{}(N2);
    \end{tikzpicture}}\end{split}
\end{equation}
of $\Heap_2$ is encoded by the heap of pieces of width $3$ depicted by
\begin{equation}
    \begin{split}\scalebox{.4}{\begin{tikzpicture}
        \node[Domino3](0)at(0,0){};
        \node[Domino3](1)at(3,0){};
        \node[Domino3](2)at(1,.5){};
        \node[Domino3](3)at(6,0){};
        \node[Domino3](4)at(12,0){};
        \node[Domino3](5)at(10,.5){};
        \node[Domino3](6)at(8,1){};
    \end{tikzpicture}}\end{split}\,.
\end{equation}
Notice that $\Heap_0 = \PRF_0$. Besides, since the reduced elements of
$\Heap_\gamma$ have no wire, they are encoded by horizontally connected
heaps of pieces of width $\gamma + 1$.
\medskip

\subsubsection{Hopf algebra}
By Theorem \ref{thm:PRO_vers_AHC_bigebre} and
Proposition \ref{prop:PRO_vers_AHC_graduation}, $\PvH(\Heap_\gamma)$
is a combinatorial Hopf algebra. By
Proposition \ref{prop:PRO_vers_AHC_generation_liberte}, as an algebra,
$\PvH(\Heap_\gamma)$ is freely generated by the $\Sbf_\lambda$ where
the $\lambda$ are heaps of pieces that cannot be obtained by juxtaposing
two heaps of pieces. Its bases are indexed by horizontally connected
heaps of pieces of width $\gamma + 1$ where the degree of a basis
element $\Sbf_\Lambda$ is the number of pieces of $\Lambda$.
\medskip

\subsubsection{Coproduct}
The coproduct of $\PvH(\Heap_\gamma)$ can be described, like the one of
$\CK$ on forests, by means of admissible cuts on heaps of pieces. Indeed,
if $\Lambda$ is a horizontally connected heap of pieces, by definition
of the construction $\PvH$,
\begin{equation}
    \Delta(\Sbf_\Lambda) =
    \sum_{\Lambda' \in \Adm(\Lambda)}
    \Sbf_{\Lambda'} \otimes \Sbf_{\Lambda/_{\Lambda'}},
\end{equation}
where $\Adm(\Lambda)$ is the set of {\em admissible cuts} of $\Lambda$,
that is, the set of heaps of pieces obtained by keeping an upper part
of $\Lambda$ and by readjusting it so that it becomes horizontally
connected and where $\Lambda/_{\Lambda'}$ denotes the heap of pieces
obtained by removing from $\Lambda$ the pieces of $\Lambda'$ and by
readjusting the remaining pieces so that they form an horizontally
connected heap of pieces. For instance, in $\PvH(\Heap_1)$, we have
\begin{multline}
    \Delta
    \Sbf_{\scalebox{.4}{\begin{tikzpicture}
        \node[Domino2](0)at(0,0){};
        \node[Domino2](1)at(2,0){};
        \node[Domino2](2)at(1,-.5){};
        \node[Domino2](3)at(2,-1){};
    \end{tikzpicture}}}
    \enspace = \enspace
    \Sbf_{\emptyset} \otimes
    \Sbf_{\scalebox{.4}{\begin{tikzpicture}
        \node[Domino2](0)at(0,0){};
        \node[Domino2](1)at(2,0){};
        \node[Domino2](2)at(1,-.5){};
        \node[Domino2](3)at(2,-1){};
    \end{tikzpicture}}}
    \enspace + \enspace
    \Sbf_{\scalebox{.4}{\begin{tikzpicture}
        \node[Domino2](0)at(0,0){};
    \end{tikzpicture}}}
    \otimes
    \Sbf_{\scalebox{.4}{\begin{tikzpicture}
        \node[Domino2](1)at(2,0){};
        \node[Domino2](2)at(1,-.5){};
        \node[Domino2](3)at(2,-1){};
    \end{tikzpicture}}}
    \enspace + \enspace
    \Sbf_{\scalebox{.4}{\begin{tikzpicture}
        \node[Domino2](0)at(0,0){};
    \end{tikzpicture}}}
    \otimes
    \Sbf_{\scalebox{.4}{\begin{tikzpicture}
        \node[Domino2](0)at(0,0){};
        \node[Domino2](2)at(1,-.5){};
        \node[Domino2](3)at(2,-1){};
    \end{tikzpicture}}} \\
    \enspace + \enspace
    \Sbf_{\scalebox{.4}{\begin{tikzpicture}
        \node[Domino2](0)at(0,0){};
        \node[Domino2](1)at(2,0){};
    \end{tikzpicture}}}
    \otimes
    \Sbf_{\scalebox{.4}{\begin{tikzpicture}
        \node[Domino2](2)at(1,-.5){};
        \node[Domino2](3)at(2,-1){};
    \end{tikzpicture}}}
    \enspace + \enspace
    \Sbf_{\scalebox{.4}{\begin{tikzpicture}
        \node[Domino2](0)at(0,0){};
        \node[Domino2](1)at(2,0){};
        \node[Domino2](2)at(1,-.5){};
    \end{tikzpicture}}}
    \otimes
    \Sbf_{\scalebox{.4}{\begin{tikzpicture}
        \node[Domino2](3)at(2,-1){};
    \end{tikzpicture}}}
    \enspace + \enspace
    \Sbf_{\scalebox{.4}{\begin{tikzpicture}
        \node[Domino2](0)at(0,0){};
        \node[Domino2](1)at(2,0){};
        \node[Domino2](2)at(1,-.5){};
        \node[Domino2](3)at(2,-1){};
    \end{tikzpicture}}}
    \otimes
    \Sbf_{\emptyset}\,.
\end{multline}
\medskip

\subsubsection{Dimensions}
\begin{Proposition} \label{prop:dimensions_PvH_Heap_gamma}
    For any nonnegative integer $\gamma$, the Hilbert series
    $C_{\gamma}(t)$ of $\PvH(\Heap_\gamma)$ satisfies
    $C_{\gamma}(t) = \sum_{n \geq 0} C_{\gamma, n}(t)$, where
    \begin{equation}
        C_{\gamma, n}(t) := P_{\gamma, n}(t)
        - \sum_{k = 0}^{n - 1}
            C_{\gamma, k}(t) P_{\gamma, n - k - 1}(t),
    \end{equation}
    \begin{equation}
        P_{\gamma, n}(t) := \frac{1}{F_{\gamma, n}(t)},
    \end{equation}
    and
    \begin{equation}
        F_{\gamma, n}(t) :=
        \begin{cases}
            1 & \mbox{if } n \leq \gamma, \\
            F_{\gamma, n - 1}(t) - t F_{\gamma, n - \gamma - 1}(t)
                & \mbox{otherwise}.
        \end{cases}
    \end{equation}
\end{Proposition}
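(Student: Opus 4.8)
The plan is to reduce the statement to a counting problem on heaps of pieces and then to apply the Cartier--Foata / Viennot inversion lemma for heaps. Recall from the discussion preceding the statement that the reduced elements of $\Heap_\gamma$ are exactly the heaps of pieces of width $\gamma + 1$ in which every column is covered by at least one piece: indeed, a column covered by no piece would, since pieces occupy $\gamma + 1$ consecutive columns in a planar prograph, disconnect the prograph there and produce a wire factor, contradicting reducedness, and conversely the absence of an uncovered column forbids any wire factor. Since the fundamental basis $\Sbf_x$ is indexed by reduced elements with $\deg \Sbf_x$ equal to the number of pieces of $x$, the Hilbert series is $C_\gamma(t) = \sum_{n \geq 0} C_{\gamma, n}(t)$, where $C_{\gamma, n}(t)$ denotes the generating series, by number of pieces, of the reduced elements of input arity $n$, that is, of the heaps on $n$ columns covering every column. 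This sum is well defined as a formal power series because a heap with $d$ pieces covers at most $(\gamma + 1) d$ columns, so only finitely many values of $n$ contribute to each degree.

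Next I would introduce $P_{\gamma, n}(t)$ as the generating series, by number of pieces, of \emph{all} heaps of pieces of width $\gamma + 1$ on $n$ columns, the empty heap included and columns allowed to remain uncovered. Here the admissible positions of a piece are the left-end columns $1, \dots, n - \gamma$, two pieces being concurrent exactly when their supports are disjoint, i.e.\ when their left ends differ by at least $\gamma + 1$. The trivial heaps (sets of pairwise concurrent pieces) are thus the sets of such positions that are pairwise at distance at least $\gamma + 1$, each position used at most once. By the Cartier--Foata inversion lemma for heaps (see \cite{Vie86}), $P_{\gamma, n}(t) = 1/F_{\gamma, n}(t)$, where $F_{\gamma, n}(t) = \sum_{T} (-t)^{|T|}$ is the alternating generating series of trivial heaps $T$ on $n$ columns. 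To obtain the recursion for $F_{\gamma, n}$, I would sort trivial heaps according to whether column $n$ is covered: if it is not, the trivial heap lives on the first $n - 1$ columns, contributing $F_{\gamma, n - 1}(t)$; if it is, the covering piece has left end $n - \gamma$, contributes a factor $-t$, and the remaining pieces form a trivial heap on the first $n - \gamma - 1$ columns, contributing $-t\, F_{\gamma, n - \gamma - 1}(t)$. For $n \leq \gamma$ no piece fits, so $F_{\gamma, n}(t) = 1$; this is exactly the stated recursion.

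Finally I would establish the convolution linking $C_{\gamma, n}$ and $P_{\gamma, n}$ by decomposing an arbitrary heap on $n$ columns at its leftmost uncovered column. If column $j$ is the first uncovered column, then columns $1, \dots, j - 1$ carry a heap covering each of them (no piece can straddle column $j$, which is uncovered), contributing $C_{\gamma, j - 1}(t)$, column $j$ is a bare wire, and columns $j + 1, \dots, n$ carry an arbitrary heap, contributing $P_{\gamma, n - j}(t)$; the case where no column is uncovered contributes $C_{\gamma, n}(t)$. Setting $P_{\gamma, -1}(t) := 1$ and writing $k = j - 1$, this reads $P_{\gamma, n}(t) = \sum_{k = 0}^{n} C_{\gamma, k}(t)\, P_{\gamma, n - k - 1}(t)$, and isolating the term $k = n$ gives precisely the displayed formula for $C_{\gamma, n}(t)$. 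Summing over $n$ then yields $C_\gamma(t)$. The only genuinely delicate point is the justification that pieces occupy consecutive columns, so that the leftmost uncovered column cleanly severs the heap into independent left and right parts and the heap model of $\Heap_\gamma$ really gives a Cartier--Foata commutation monoid; once this planar picture is in hand, the two combinatorial decompositions (of trivial heaps by the last column and of all heaps by the first gap) are routine.
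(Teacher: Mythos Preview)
Your proposal is correct and follows essentially the same route as the paper: identify $F_{\gamma,n}$ as the alternating series over trivial heaps, invoke Viennot's inversion lemma to get $P_{\gamma,n}=1/F_{\gamma,n}$ as the series of all heaps on $n$ columns, and then recover $C_{\gamma,n}$ by splitting an arbitrary heap at its first uncovered column (equivalently, writing $x=y*\Unite_1*z$ with $y$ reduced). The only cosmetic difference is that you spell out the last-column recursion for $F_{\gamma,n}$, which the paper leaves as ``obvious''.
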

\begin{proof}
    The proof of this statement uses the {\em Inversion Lemma} of
    Viennot \cite{Vie86} and some ideas employed in \cite{BMR02} for
    the enumeration of the so-called {\em connected heaps}.
    \smallskip

    The first ingredient consists in the alternating generating series
    \begin{equation}
        \sum_{x = \Unite_{p_1} * \La * \Unite_{p_2} * \dots * \Unite_{p_\ell} * \La * \Unite_{p_{\ell + 1}} \in \Heap_\gamma}
        (-1)^{\deg(x)} t^{\deg(x)}
    \end{equation}
    of the heaps of pieces of $\Heap_\gamma$ of height no greater than
    $1$ (called {\em trivial heaps} in \cite{Vie86} and \cite{BMR02}).
    This series is obviously $F_{\gamma, n}(t)$. By the Inversion Lemma,
    we have that $P_{\gamma, n}(t)$ is the generating series of the
    elements of $\Heap_\gamma$ with exactly $n$ inputs (and thus, also
    $n$ outputs).
    \smallskip

    Now, to count only the reduced elements of $\Heap_\gamma$, observe
    that any element $x$ of $\Heap_\gamma$ with $n$ inputs is either
    reduced or is of the form $x = y * \Unite_1 * z$ where $y$ is a
    reduced element of $\Heap_\gamma$ with $k$ inputs and $z$ is an
    element of $\Heap_\gamma$ with $n - k - 1$ inputs. Then, we have
    \begin{equation}
        P_{\gamma, n}(t) =
        C_{\gamma, n}(t)
        + \sum_{k = 0}^{n - 1} C_{\gamma, k}(t) P_{\gamma, n - k - 1}(t),
    \end{equation}
    so that $C_{\gamma, n}(t)$ counts the reduced elements of $\Heap_\gamma$
    with $n$ inputs. Whence the result.
\end{proof}
\medskip

By using Proposition \ref{prop:dimensions_PvH_Heap_gamma}, one can
compute the first dimensions of $\PvH(\Heap_\gamma)$. The first
dimensions of $\PvH(\Heap_1)$ are
\begin{equation} \label{equ:dim_Heap_1}
    1, 1, 4, 18, 85, 411, 2014, 9950, 49417, 246302, 1230623,
\end{equation}
and those of $\PvH(\Heap_2)$ are
\begin{equation} \label{equ:dim_Heap_2}
    1, 1, 6, 42, 313, 2407, 18848, 149271, 1191092, 9553551, 76910632.
\end{equation}
Since by Proposition \ref{prop:PRO_vers_AHC_generation_liberte},
$\PvH(\Heap_\gamma)$ is free as an algebra, the series $G_\gamma(t)$ of
its algebraic generators satisfies $G_\gamma(t) = 1 - \frac{1}{C_\gamma(t)}$.
The first dimensions of the algebraic generators of $\PvH(\Heap_1)$ are
\begin{equation} \label{equ:dim_generateurs_Heap_1}
    1, 3, 11, 44, 184, 790, 3450, 15242, 67895, 304267, 1369761,
\end{equation}
and those of $\PvH(\Heap_2)$ are
\begin{equation}
    1, 5, 31, 210, 1488, 10826, 80111, 599671, 4525573, 34357725,
    262011295.
\end{equation}
Among these four integer sequences, only \eqref{equ:dim_generateurs_Heap_1}
is listed in \cite{Slo} as Sequence \Sloane{A059715}.
\medskip

\subsection{Hopf algebra of heaps of friable pieces}
From a PRO being a special quotient of $\Heap_\gamma$, we construct
a Hopf algebra structure on the $(\gamma+1)$-st tensor power of the vector
space $\SymNC$.
\medskip

\subsubsection{PRO of heaps of friable pieces}
Let $\gamma$ be a nonnegative integer and $\FHeap_\gamma$ be the quotient
of $\Heap_\gamma$ by the finest congruence $\equiv$ satisfying
\begin{equation}
\begin{split}\scalebox{.25}{\begin{tikzpicture}[yscale=1.1]
    \node[Feuille](S1)at(0,0){};
    \node[Feuille](S2)at(2,0){};
    \node[Feuille](S3)at(4,0){};
    \node[Feuille](S4)at(6,0){};
    \node[Operateur,minimum width=3cm](N1)at(1,-2){\begin{math}\La\end{math}};
    \node[Operateur,minimum width=3cm](N2)at(3,-5){\begin{math}\La\end{math}};
    \node[Feuille](E1)at(-2,-7){};
    \node[Feuille](E2)at(0,-7){};
    \node[Feuille](E3)at(2,-7){};
    \node[Feuille](E4)at(4,-7){};
    \draw[Arete](N1)--(S1);
    \draw[Arete](N1)--(S2);
    \draw[Arete](N1)--(E1);
    \draw[Arete](N1)--(E2);
    \draw[Arete](N2)--(S3);
    \draw[Arete](N2)--(S4);
    \draw[Arete](N2)--(E3);
    \draw[Arete](N2)--(E4);
    \node[above of=N1,node distance=2cm]
        {\scalebox{2}{\begin{math}\Huge \dots\end{math}}};
    \node[below of=N2,node distance=2cm]
        {\scalebox{2}{\begin{math}\Huge \dots\end{math}}};
    \node[right of=S3,node distance=1cm]
        {\scalebox{2}{\begin{math}\Huge \dots\end{math}}};
    \node[left of=E2,node distance=1cm]
        {\scalebox{2}{\begin{math}\Huge \dots\end{math}}};
    \draw[Arete](N1)edge[bend left=27] node[]{}(N2);
    \draw[Arete](N1)edge[bend right=27] node[]{}(N2);
    \node[]at(2.0,-3.5)
        {\scalebox{2}{\begin{math}\Huge \dots\end{math}}};
    \node[]at(2,-3.9){\scalebox{3}{\begin{math}\Huge \ell\end{math}}};
\end{tikzpicture}}\end{split}
\quad \equiv \quad
\begin{split}\scalebox{.25}{\begin{tikzpicture}[yscale=1.1]
    \node[Feuille](S1)at(6,0){};
    \node[Feuille](S2)at(4,0){};
    \node[Feuille](S3)at(2,0){};
    \node[Feuille](S4)at(0,0){};
    \node[Operateur,minimum width=3cm](N1)at(5,-2){\begin{math}\La\end{math}};
    \node[Operateur,minimum width=3cm](N2)at(3,-5){\begin{math}\La\end{math}};
    \node[Feuille](E1)at(8,-7){};
    \node[Feuille](E2)at(6,-7){};
    \node[Feuille](E3)at(4,-7){};
    \node[Feuille](E4)at(2,-7){};
    \draw[Arete](N1)--(S1);
    \draw[Arete](N1)--(S2);
    \draw[Arete](N1)--(E1);
    \draw[Arete](N1)--(E2);
    \draw[Arete](N2)--(S3);
    \draw[Arete](N2)--(S4);
    \draw[Arete](N2)--(E3);
    \draw[Arete](N2)--(E4);
    \node[above of=N1,node distance=2cm]
        {\scalebox{2}{\begin{math}\Huge \dots\end{math}}};
    \node[below of=N2,node distance=2cm]
        {\scalebox{2}{\begin{math}\Huge \dots\end{math}}};
    \node[right of=S4,node distance=1cm]
        {\scalebox{2}{\begin{math}\Huge \dots\end{math}}};
    \node[right of=E2,node distance=1cm]
        {\scalebox{2}{\begin{math}\Huge \dots\end{math}}};
    \draw[Arete](N1)edge[bend left=27] node[]{}(N2);
    \draw[Arete](N1)edge[bend right=27] node[]{}(N2);
    \node[]at(4.0,-3.5)
        {\scalebox{2}{\begin{math}\Huge \dots\end{math}}};
    \node[]at(4,-3.9){\scalebox{3}{\begin{math}\Huge \ell\end{math}}};
\end{tikzpicture}}\end{split}\,,
\qquad \ell \in [\gamma].
\end{equation}
For instance, for $\gamma = 2$, the $\equiv$-equivalence class of
\begin{equation}
\begin{split}\scalebox{.25}{\begin{tikzpicture}
    \node[Feuille](S1)at(0,0){};
    \node[Feuille](S2)at(1,0){};
    \node[Feuille](S3)at(2,0){};
    \node[Feuille](S4)at(5,0){};
    \node[Feuille](S5)at(6,0){};
    \node[Feuille](S6)at(7,0){};
    \node[Operateur,minimum width=3cm](N1)at(1,-2){\begin{math}\La\end{math}};
    \node[Operateur,minimum width=3cm](N2)at(3,-5){\begin{math}\La\end{math}};
    \node[Operateur,minimum width=3cm](N3)at(6,-2){\begin{math}\La\end{math}};
    \node[Feuille](E1)at(0,-7){};
    \node[Feuille](E2)at(2,-7){};
    \node[Feuille](E3)at(3,-7){};
    \node[Feuille](E4)at(4,-7){};
    \node[Feuille](E5)at(6,-7){};
    \node[Feuille](E6)at(7,-7){};
    \draw[Arete](N1)--(S1);
    \draw[Arete](N1)--(S2);
    \draw[Arete](N1)--(S3);
    \draw[Arete](N3)--(S4);
    \draw[Arete](N3)--(S5);
    \draw[Arete](N3)--(S6);
    \draw[Arete](N1)--(E1);
    \draw[Arete](N2)--(E2);
    \draw[Arete](N2)--(E3);
    \draw[Arete](N2)--(E4);
    \draw[Arete](N3)--(E5);
    \draw[Arete](N3)--(E6);
    \draw[Arete](N2)--(N3);
    \draw[Arete](N1)edge[bend left=20] node[]{}(N2);
    \draw[Arete](N1)edge[bend right=20] node[]{}(N2);
\end{tikzpicture}}\end{split}
\end{equation}
contains exactly the prographs
\begin{equation} \label{equ:exemple_classe_FHeap_3}
    \begin{split}\scalebox{.25}{\begin{tikzpicture}
        \node[Feuille](S1)at(0,0){};
        \node[Feuille](S2)at(2,0){};
        \node[Feuille](S3)at(3,0){};
        \node[Feuille](S4)at(5,0){};
        \node[Feuille](S5)at(6,0){};
        \node[Feuille](S6)at(7,0){};
        \node[Operateur,minimum width=3cm](N1)at(1,-8){\begin{math}\La\end{math}};
        \node[Operateur,minimum width=3cm](N2)at(3,-5){\begin{math}\La\end{math}};
        \node[Operateur,minimum width=3cm](N3)at(6,-2){\begin{math}\La\end{math}};
        \node[Feuille](E1)at(0,-10){};
        \node[Feuille](E2)at(1,-10){};
        \node[Feuille](E3)at(2,-10){};
        \node[Feuille](E4)at(4,-10){};
        \node[Feuille](E5)at(6,-10){};
        \node[Feuille](E6)at(7,-10){};
        \draw[Arete](N1)--(S1);
        \draw[Arete](N2)--(S2);
        \draw[Arete](N2)--(S3);
        \draw[Arete](N3)--(S4);
        \draw[Arete](N3)--(S5);
        \draw[Arete](N3)--(S6);
        \draw[Arete](N1)--(E1);
        \draw[Arete](N1)--(E2);
        \draw[Arete](N1)--(E3);
        \draw[Arete](N2)--(E4);
        \draw[Arete](N3)--(E5);
        \draw[Arete](N3)--(E6);
        \draw[Arete](N2)--(N3);
        \draw[Arete](N1)edge[bend left=20] node[]{}(N2);
        \draw[Arete](N1)edge[bend right=20] node[]{}(N2);
    \end{tikzpicture}}\end{split}\,,\quad
    \begin{split}\scalebox{.25}{\begin{tikzpicture}
        \node[Feuille](S1)at(0,0){};
        \node[Feuille](S2)at(2,0){};
        \node[Feuille](S3)at(3,0){};
        \node[Feuille](S4)at(4,0){};
        \node[Feuille](S5)at(6,0){};
        \node[Feuille](S6)at(7,0){};
        \node[Operateur,minimum width=3cm](N1)at(1,-5){\begin{math}\La\end{math}};
        \node[Operateur,minimum width=3cm](N2)at(3,-2){\begin{math}\La\end{math}};
        \node[Operateur,minimum width=3cm](N3)at(6,-5){\begin{math}\La\end{math}};
        \node[Feuille](E1)at(0,-7){};
        \node[Feuille](E2)at(1,-7){};
        \node[Feuille](E3)at(2,-7){};
        \node[Feuille](E4)at(5,-7){};
        \node[Feuille](E5)at(6,-7){};
        \node[Feuille](E6)at(7,-7){};
        \draw[Arete](N1)--(S1);
        \draw[Arete](N2)--(S2);
        \draw[Arete](N2)--(S3);
        \draw[Arete](N2)--(S4);
        \draw[Arete](N3)--(S5);
        \draw[Arete](N3)--(S6);
        \draw[Arete](N1)--(E1);
        \draw[Arete](N1)--(E2);
        \draw[Arete](N1)--(E3);
        \draw[Arete](N3)--(E4);
        \draw[Arete](N3)--(E5);
        \draw[Arete](N3)--(E6);
        \draw[Arete](N2)--(N3);
        \draw[Arete](N1)edge[bend left=20] node[]{}(N2);
        \draw[Arete](N1)edge[bend right=20] node[]{}(N2);
    \end{tikzpicture}}\end{split}\,,\quad
    \begin{split}\scalebox{.25}{\begin{tikzpicture}
        \node[Feuille](S1)at(0,0){};
        \node[Feuille](S2)at(1,0){};
        \node[Feuille](S3)at(2,0){};
        \node[Feuille](S4)at(5,0){};
        \node[Feuille](S5)at(6,0){};
        \node[Feuille](S6)at(7,0){};
        \node[Operateur,minimum width=3cm](N1)at(1,-2){\begin{math}\La\end{math}};
        \node[Operateur,minimum width=3cm](N2)at(3,-5){\begin{math}\La\end{math}};
        \node[Operateur,minimum width=3cm](N3)at(6,-2){\begin{math}\La\end{math}};
        \node[Feuille](E1)at(0,-7){};
        \node[Feuille](E2)at(2,-7){};
        \node[Feuille](E3)at(3,-7){};
        \node[Feuille](E4)at(4,-7){};
        \node[Feuille](E5)at(6,-7){};
        \node[Feuille](E6)at(7,-7){};
        \draw[Arete](N1)--(S1);
        \draw[Arete](N1)--(S2);
        \draw[Arete](N1)--(S3);
        \draw[Arete](N3)--(S4);
        \draw[Arete](N3)--(S5);
        \draw[Arete](N3)--(S6);
        \draw[Arete](N1)--(E1);
        \draw[Arete](N2)--(E2);
        \draw[Arete](N2)--(E3);
        \draw[Arete](N2)--(E4);
        \draw[Arete](N3)--(E5);
        \draw[Arete](N3)--(E6);
        \draw[Arete](N2)--(N3);
        \draw[Arete](N1)edge[bend left=20] node[]{}(N2);
        \draw[Arete](N1)edge[bend right=20] node[]{}(N2);
    \end{tikzpicture}}\end{split}\,,\quad
    \begin{split}\scalebox{.25}{\begin{tikzpicture}
        \node[Feuille](S1)at(0,0){};
        \node[Feuille](S2)at(1,0){};
        \node[Feuille](S3)at(2,0){};
        \node[Feuille](S4)at(4,0){};
        \node[Feuille](S5)at(6,0){};
        \node[Feuille](S6)at(7,0){};
        \node[Operateur,minimum width=3cm](N1)at(1,-2){\begin{math}\La\end{math}};
        \node[Operateur,minimum width=3cm](N2)at(3,-5){\begin{math}\La\end{math}};
        \node[Operateur,minimum width=3cm](N3)at(6,-8){\begin{math}\La\end{math}};
        \node[Feuille](E1)at(0,-10){};
        \node[Feuille](E2)at(2,-10){};
        \node[Feuille](E3)at(3,-10){};
        \node[Feuille](E4)at(5,-10){};
        \node[Feuille](E5)at(6,-10){};
        \node[Feuille](E6)at(7,-10){};
        \draw[Arete](N1)--(S1);
        \draw[Arete](N1)--(S2);
        \draw[Arete](N1)--(S3);
        \draw[Arete](N2)--(S4);
        \draw[Arete](N3)--(S5);
        \draw[Arete](N3)--(S6);
        \draw[Arete](N1)--(E1);
        \draw[Arete](N2)--(E2);
        \draw[Arete](N2)--(E3);
        \draw[Arete](N3)--(E4);
        \draw[Arete](N3)--(E5);
        \draw[Arete](N3)--(E6);
        \draw[Arete](N2)--(N3);
        \draw[Arete](N1)edge[bend left=20] node[]{}(N2);
        \draw[Arete](N1)edge[bend right=20] node[]{}(N2);
    \end{tikzpicture}}\end{split}\,.
\end{equation}
\medskip

We can observe that $\FHeap_\gamma$ is a stiff PRO because $\equiv$
satisfies \ref{item:propriete_bonnes_congruences_1} and
\ref{item:propriete_bonnes_congruences_2} and
$\FHeap_0 = \Heap_0$. We call $\FHeap_\gamma$
the {\em PRO of heaps of friable pieces} of width $\gamma + 1$. This
terminology is justified by the following observation. Any piece of
width $\gamma + 1$ (depicted by
$\scalebox{.4}{\tikz \node[Domino2](0)at(0,0){};}$) consists in
$\gamma + 1$ small pieces, called {\em bursts}, glued together. This forms
a {\em friable piece} (depicted, for $\gamma = 2$ for instance, by
$\scalebox{.4}{\begin{tikzpicture}
    \node[Domino1](0)at(0,0){};
    \node[Domino1](1)at(1,0){};
    \node[Domino1](2)at(2,0){};
\end{tikzpicture}}$).
The congruence $\equiv$ of $\Heap_\gamma$ can be interpreted by letting
all pieces break under gravity, separating the bursts constituting these.
For instance, the prographs of \eqref{equ:exemple_classe_FHeap_3},
respectively, encoded by the heaps of pieces
\begin{equation} \label{equ:exemple_classe_Heap_3}
    \begin{split}\scalebox{.4}{\begin{tikzpicture}
        \node[Domino3](0)at(0,0){};
        \node[Domino3](1)at(1,.5){};
        \node[Domino3](2)at(3,1){};
    \end{tikzpicture}}\end{split}\,,\quad
    \begin{split}\scalebox{.4}{\begin{tikzpicture}
        \node[Domino3](0)at(0,0){};
        \node[Domino3](1)at(3,0){};
        \node[Domino3](2)at(1,.5){};
    \end{tikzpicture}}\end{split}\,,\quad
    \begin{split}\scalebox{.4}{\begin{tikzpicture}
        \node[Domino3](0)at(0,0){};
        \node[Domino3](1)at(-1,.5){};
        \node[Domino3](2)at(2,.5){};
    \end{tikzpicture}}\end{split}\,,\quad
    \begin{split}\scalebox{.4}{\begin{tikzpicture}
        \node[Domino3](0)at(0,0){};
        \node[Domino3](1)at(1,-.5){};
        \node[Domino3](2)at(3,-1){};
    \end{tikzpicture}}\end{split}\,,
\end{equation}
all become the heap of friable pieces
\begin{equation} \label{equ:exemple_empilement_friable}
    \begin{split}\scalebox{.4}{\begin{tikzpicture}
    \node[Domino1](0)at(0,0){};
    \node[Domino1](1)at(1,0){};
    \node[Domino1](2)at(2,0){};
    \node[Domino1](3)at(3,0){};
    \node[Domino1](4)at(4,0){};
    \node[Domino1](5)at(5,0){};
    \node[Domino1](6)at(1,.5){};
    \node[Domino1](7)at(2,.5){};
    \node[Domino1](8)at(3,.5){};
    \end{tikzpicture}}\end{split}
\end{equation}
obtained by replacing each piece of any heap of pieces of
\eqref{equ:exemple_classe_Heap_3} by friable pieces.
\medskip

The grading $\omega$ of $\FHeap_\gamma$ is the one inherited from
the one of $\Heap_\gamma$. This grading is still well-defined in
$\Heap_\gamma$ since any $\equiv$-equivalence class contains prographs
of a same degree. Since the reduced elements of $\FHeap_\gamma$ have
no wire, they are encoded by horizontally connected heaps of friable
pieces.
\medskip

Besides, $\FHeap_\gamma$ admits the following alternative description
using the $\MvP$ construction (see Section \ref{subsubsec:monoides_vers_PROs}).
Indeed, $\FHeap_\gamma$ is the sub-PRO of $\MvP(\EnsNat)$ generated
by $1^{\gamma+1}$, where $\EnsNat$ denotes here the additive monoid
of nonnegative integers and $1^{\gamma+1}$ denotes the sequence of $\gamma+1$
occurrences of $1 \in \EnsNat$. The correspondence between heaps of
friable pieces and words of integers of this second description is clear
since any element $x$ of the sub-PRO of $\MvP(\EnsNat)$ generated by
$1^{\gamma+1}$ encodes a heap of friable pieces consisting, from left
to right, in columns of $x_i$ bursts for $i \in [n]$, where $n$ is the
length of $x$. For instance, the word $122211$ encodes the heap of
friable pieces of \eqref{equ:exemple_empilement_friable}.
\medskip

\subsubsection{Hopf algebra}
By Theorem \ref{thm:PRO_vers_AHC_congruence} and
Proposition \ref{prop:PRO_vers_AHC_graduation}, $\PvH(\FHeap_\gamma)$ is
a combinatorial Hopf subalgebra of $\PvH(\Heap_\gamma)$. The bases
of $\PvH(\FHeap_\gamma)$ are indexed by horizontally connected heaps
of friable pieces of width $\gamma + 1$ where the degree of a basis
element $\Tbf_\Lambda$ is the number of pieces of $\Lambda$.
\medskip

\subsubsection{Coproduct}
The coproduct of $\PvH(\FHeap_\gamma)$ can be described with the aid of
the interpretation of $\FHeap_\gamma$ as a sub-PRO of $\MvP(\N)$.
Indeed, if $\Lambda$ is an horizontally connected heap of friable pieces,
by Proposition \ref{prop:PRO_vers_AHC_congruence_coproduit},
\begin{equation}
    \Delta(\Tbf_\Lambda) =
    \sum_{\substack{\Lambda_1, \Lambda_2 \in \FHeap_\gamma \\
    \Lambda = \Lambda_1 + \Lambda_2}}
    \Tbf_{\Lambda'_1} \otimes \Tbf_{\Lambda'_2},
\end{equation}
where $\Lambda_1 + \Lambda_2$ is the heap of friable pieces obtained
by stacking $\Lambda_2$ onto $\Lambda_1$ and where $\Lambda'_1$
(resp. $\Lambda'_2$) is the readjustment of $\Lambda_1$ (resp. $\Lambda_2$)
so that it is horizontally connected. For instance, we have
in $\PvH(\FHeap_1)$
\begin{equation}
    \Tbf_{\scalebox{.4}{\begin{tikzpicture}
        \node[Domino1](0)at(0,0){};
        \node[Domino1](1)at(-1,-.5){};
        \node[Domino1](2)at(0,-.5){};
        \node[Domino1](3)at(-1,-1){};
        \node[Domino1](4)at(0,-1){};
        \node[Domino1](5)at(1,-1){};
    \end{tikzpicture}}}
    \enspace = \enspace
    \Sbf_{\scalebox{.4}{\begin{tikzpicture}
        \node[Domino2](0)at(0,0){};
        \node[Domino2](1)at(1,-.5){};
        \node[Domino2](2)at(0,-1){};
    \end{tikzpicture}}}
    \enspace + \enspace
    \Sbf_{\scalebox{.4}{\begin{tikzpicture}
        \node[Domino2](0)at(0,0){};
        \node[Domino2](1)at(-1,-.5){};
        \node[Domino2](2)at(-1,-1){};
    \end{tikzpicture}}}
    \enspace + \enspace
    \Sbf_{\scalebox{.4}{\begin{tikzpicture}
        \node[Domino2](0)at(0,0){};
        \node[Domino2](1)at(0,-.5){};
        \node[Domino2](2)at(1,-1){};
    \end{tikzpicture}}}\,,
\end{equation}
\begin{multline}
    \Delta
    \Tbf_{\scalebox{.4}{\begin{tikzpicture}
        \node[Domino1](0)at(0,0){};
        \node[Domino1](1)at(-1,-.5){};
        \node[Domino1](2)at(0,-.5){};
        \node[Domino1](3)at(-1,-1){};
        \node[Domino1](4)at(0,-1){};
        \node[Domino1](5)at(1,-1){};
    \end{tikzpicture}}}
    \enspace = \enspace
    \Tbf_{\emptyset} \otimes
    \Tbf_{\scalebox{.4}{\begin{tikzpicture}
        \node[Domino1](0)at(0,0){};
        \node[Domino1](1)at(-1,-.5){};
        \node[Domino1](2)at(0,-.5){};
        \node[Domino1](3)at(-1,-1){};
        \node[Domino1](4)at(0,-1){};
        \node[Domino1](5)at(1,-1){};
    \end{tikzpicture}}}
    \enspace + \enspace
    \Tbf_{\scalebox{.4}{\begin{tikzpicture}
        \node[Domino1](1)at(-1,-.5){};
        \node[Domino1](2)at(0,-.5){};
    \end{tikzpicture}}}
    \otimes
    \Tbf_{\scalebox{.4}{\begin{tikzpicture}
        \node[Domino1](2)at(0,-.5){};
        \node[Domino1](3)at(-1,-1){};
        \node[Domino1](4)at(0,-1){};
        \node[Domino1](5)at(1,-1){};
    \end{tikzpicture}}}
    \enspace + \enspace
    \Tbf_{\scalebox{.4}{\begin{tikzpicture}
        \node[Domino1](2)at(0,-.5){};
        \node[Domino1](3)at(-1,-1){};
        \node[Domino1](4)at(0,-1){};
        \node[Domino1](5)at(1,-1){};
    \end{tikzpicture}}}
    \otimes
    \Tbf_{\scalebox{.4}{\begin{tikzpicture}
        \node[Domino1](1)at(-1,-.5){};
        \node[Domino1](2)at(0,-.5){};
    \end{tikzpicture}}} \\
    \enspace + \enspace
    \Tbf_{\scalebox{.4}{\begin{tikzpicture}
        \node[Domino1](1)at(-1,-.5){};
        \node[Domino1](2)at(0,-.5){};
    \end{tikzpicture}}}
    \otimes
    \Tbf_{\scalebox{.4}{\begin{tikzpicture}
        \node[Domino1](1)at(-1,-.5){};
        \node[Domino1](2)at(0,-.5){};
        \node[Domino1](3)at(-1,-1){};
        \node[Domino1](4)at(0,-1){};
    \end{tikzpicture}}}
    \enspace + \enspace
    \Tbf_{\scalebox{.4}{\begin{tikzpicture}
        \node[Domino1](1)at(-1,-.5){};
        \node[Domino1](2)at(0,-.5){};
        \node[Domino1](3)at(-1,-1){};
        \node[Domino1](4)at(0,-1){};
    \end{tikzpicture}}}
    \otimes
    \Tbf_{\scalebox{.4}{\begin{tikzpicture}
        \node[Domino1](1)at(-1,-.5){};
        \node[Domino1](2)at(0,-.5){};
    \end{tikzpicture}}}
    \enspace + \enspace
    \Tbf_{\scalebox{.4}{\begin{tikzpicture}
        \node[Domino1](0)at(0,0){};
        \node[Domino1](1)at(-1,-.5){};
        \node[Domino1](2)at(0,-.5){};
        \node[Domino1](3)at(-1,-1){};
        \node[Domino1](4)at(0,-1){};
        \node[Domino1](5)at(1,-1){};
    \end{tikzpicture}}}
    \otimes \Tbf_{\emptyset}.
\end{multline}
\medskip

\subsubsection{Dimensions}
\begin{Proposition} \label{prop:dimensions_PvH_FHeap_gamma}
    For any nonnegative integer $\gamma$, the $n$-th homogeneous
    component of $\PvH(\FHeap_\gamma)$ has dimension $(\gamma + 2)^{n - 1}$.
\end{Proposition}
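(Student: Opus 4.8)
The plan is to exploit the description of $\FHeap_\gamma$ as the sub-PRO of $\MvP(\N)$ generated by $1^{\gamma+1}$, under which a heap of friable pieces is recorded by the word $w = w_1 \cdots w_n$ of its column heights, and to set up an explicit bijection between the reduced elements of degree $p$ and the sequences in $\{0, 1, \ldots, \gamma+1\}^{p-1}$. Since the reduced elements of degree $p$ index the degree-$p$ homogeneous component of $\PvH(\FHeap_\gamma)$, counting these is exactly what is required.

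\textbf{The dictionary.} First I would record that an element of degree $p$ is built from $p$ copies of the generator, each contributing one \emph{window}, i.e. adding $1$ to $\gamma+1$ consecutive columns. Hence a heap of degree $p$ is exactly a word $w \in \N^n$ with $w_i = \#\{j : a_j \le i \le a_j + \gamma\}$ for a multiset of left endpoints $a_1 \le \cdots \le a_p$ in $\{1, \ldots, n-\gamma\}$. I would then check that $w$ determines this multiset: writing $m_a$ for the number of windows with left endpoint $a$, the relation $w_a - w_{a-1} = m_a - m_{a-\gamma-1}$ is a triangular recurrence recovering every $m_a$ from $w$, so the assignment (multiset of left endpoints)$\mapsto w$ is injective.

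\textbf{The bijection.} Next I would translate the word \emph{reduced}. A reduced element is one whose maximal $*$-decomposition contains no wire $\Unite_1 = (0)$; concretely this means $w$ has no column of height $0$, i.e. the occupied columns form a contiguous block, which is precisely the ``horizontally connected'' condition. Normalizing so that the first column is occupied forces $a_1 = 1$, and contiguity of $\bigcup_j [a_j, a_j + \gamma]$ is equivalent to $a_{j+1} - a_j \le \gamma + 1$ for all $j$; together with $a_{j+1} \ge a_j$ this says each consecutive difference $d_j := a_{j+1} - a_j$ lies in $\{0, 1, \ldots, \gamma+1\}$. Conversely, any such difference sequence starting from $a_1 = 1$ produces a window multiset whose columns are contiguous and all of positive height, hence a reduced heap. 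Thus $w \mapsto (d_1, \ldots, d_{p-1})$ is a bijection from reduced heaps of degree $p$ onto $\{0, \ldots, \gamma+1\}^{p-1}$, a set of cardinality $(\gamma+2)^{p-1}$, which gives the stated dimension.

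\textbf{Main obstacle and consistency check.} The two directions of the bijection are routine once the dictionary is fixed; the point needing care is that \emph{reduced} corresponds to contiguity (no empty columns), \emph{not} to indecomposability: a reduced heap may split as a horizontal product of several blocks (for instance $(1,1,1,1)$ when $\gamma = 1$), and these decomposable-but-reduced heaps are exactly those contributing a value $d_j = \gamma + 1$. As a consistency check one may instead count indecomposable reduced heaps, where the identical argument with $d_j \in \{0, \ldots, \gamma\}$ yields $(\gamma+1)^{p-1}$ algebra generators; feeding $G(t) = \sum_{p\ge 1}(\gamma+1)^{p-1} t^p = t/(1-(\gamma+1)t)$ into $H(t) = 1/(1 - G(t))$, valid since $\PvH(\FHeap_\gamma)$ is free as an algebra, gives $H(t) = (1-(\gamma+1)t)/(1-(\gamma+2)t)$, whose coefficient of $t^n$ is again $(\gamma+2)^{n-1}$.
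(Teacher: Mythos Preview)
Your proof is correct and rests on the same combinatorial core as the paper's: reduced degree-$n$ elements are parameterized by the sorted left endpoints $1=a_1\le a_2\le\cdots\le a_n$ of the $n$ pieces, and reducedness forces the consecutive gaps to be bounded.

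The packaging differs, however. The paper works upstairs in $\Heap_\gamma$, introducing a system of canonical representatives (\emph{$\gamma$-standard forms}, i.e.\ horizontal products of ``falling staircases'') for the $\equiv$-classes; each staircase is encoded by a word over $\{0,\dots,\gamma\}$ recording its internal gaps, and a reduced element of degree $n$ becomes a sequence of $k$ such words with $n-k$ letters in total, whence $\sum_k\binom{n-1}{k-1}(\gamma+1)^{n-k}=(\gamma+2)^{n-1}$. You instead work directly in the $\MvP(\N)$ model already set up in the paper, read off the column word, and encode by the single difference sequence $(d_1,\dots,d_{n-1})\in\{0,\dots,\gamma+1\}^{n-1}$. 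The two encodings match: a difference $d_j=\gamma+1$ is exactly where one staircase ends and the next begins, so your extra letter $\gamma+1$ plays the role of the separator between the paper's words. Your route is shorter because it bypasses the choice-of-representative argument and lands on $(\gamma+2)^{n-1}$ without an intermediate binomial sum; the paper's route, on the other hand, makes the free-algebra generators (the indecomposable staircases, counted by $(\gamma+1)^{n-1}$) visible as a byproduct, which you recover separately in your consistency check.
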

\begin{proof}
    Let us show that there are $(\gamma + 2)^{n - 1}$ reduced
    elements in $\FHeap_\gamma$.
    \smallskip

    Let $x$ be a prograph of $\Heap_\gamma$. We say that $x$ is
    a {\em $\gamma$-falling staircase} if $x$ is of the form
    \begin{equation} \label{equ:decomposition_escalier_FHeap}
        x = (\Unite_{p_1} * \La * \Unite_{q_1}) \circ
        (\Unite_{p_1 + p_2} * \La * \Unite_{q_2}) \circ \dots
        \circ
        (\Unite_{p_1 + p_2 + \dots + p_\ell} * \La * \Unite_{q_\ell}),
    \end{equation}
    where $p_1 = 0$, $q_\ell = 0$, and $0 \leq p_i \leq \gamma$
    for all $2 \leq i \leq \ell$. Any $\gamma$-falling staircase
    can be encoded by the sequence $(p_2, \dots, p_\ell)$ involved
    in its decomposition \eqref{equ:decomposition_escalier_FHeap}.
    For instance, for
    \begin{equation}
        \begin{split}x := \enspace\end{split}
        \begin{split}\scalebox{.25}{\begin{tikzpicture}
            \node[Feuille](S1)at(0,0){};
            \node[Feuille](S2)at(1,0){};
            \node[Feuille](S3)at(2,0){};
            \node[Feuille](S4)at(4,0){};
            \node[Feuille](S5)at(6,0){};
            \node[Feuille](S6)at(7,0){};
            \node[Operateur,minimum width=3cm](N1)at(1,-2)
                    {\begin{math}\La\end{math}};
            \node[Operateur,minimum width=3cm](N2)at(3,-5)
                    {\begin{math}\La\end{math}};
            \node[Operateur,minimum width=3cm](N3)at(6,-8)
                    {\begin{math}\La\end{math}};
            \node[Feuille](E1)at(0,-10){};
            \node[Feuille](E2)at(2,-10){};
            \node[Feuille](E3)at(3,-10){};
            \node[Feuille](E4)at(5,-10){};
            \node[Feuille](E5)at(6,-10){};
            \node[Feuille](E6)at(7,-10){};
            \draw[Arete](N1)--(S1);
            \draw[Arete](N1)--(S2);
            \draw[Arete](N1)--(S3);
            \draw[Arete](N2)--(S4);
            \draw[Arete](N3)--(S5);
            \draw[Arete](N3)--(S6);
            \draw[Arete](N1)--(E1);
            \draw[Arete](N2)--(E2);
            \draw[Arete](N2)--(E3);
            \draw[Arete](N3)--(E4);
            \draw[Arete](N3)--(E5);
            \draw[Arete](N3)--(E6);
            \draw[Arete](N2)--(N3);
            \draw[Arete](N1)edge[bend left=20] node[]{}(N2);
            \draw[Arete](N1)edge[bend right=20] node[]{}(N2);
        \end{tikzpicture}}\end{split}\,,
    \end{equation}
    we have
    $x = (\Unite_0 * \La * \Unite_3) \circ
    (\Unite_{0 + 1} * \La * \Unite_2) \circ
    (\Unite_{0 + 1 + 2} * \La * \Unite_0)$ so that $x$
    is a $2$-falling staircase encoded by the sequence $(1, 2)$.
    \smallskip

    Moreover, we say that $x$ is a {\em $\gamma$-standard form} if
    $x$ is an horizontal composition of $\gamma$-falling staircases.
    Any $\gamma$-standard form can be encoded by the sequence
    of the sequences encoding, from left to right, its falling
    staircases.
    \smallskip

    By definition of $\FHeap_\gamma$ as a quotient of $\Heap_\gamma$,
    one can observe that two different $\gamma$-standard forms are sent
    to two different heaps of friable pieces by the canonical
    surjection $\Heap_\gamma \twoheadrightarrow \FHeap_\gamma$.
    Besides, one can straightforwardly prove by induction on the
    degree that any reduced element of $\Heap_\gamma$ is
    $\equiv$-equivalent to a $\gamma$-falling staircase.
    \smallskip

    This shows that the reduced elements of $\FHeap_\gamma$ are
    in bijection with the standard forms of $\Heap_\gamma$ of a
    same degree. Hence, the reduced elements of $\FHeap_\gamma$ of
    degree $n$ can be encoded by sequences of $k$ words on the
    alphabet $\{0\} \cup [\gamma]$, having a total of $n - k$
    letters. Whence the result.
\end{proof}
\medskip

\subsubsection{Miscellaneous properties}
By the dimensions of $\PvH(\FHeap_\gamma)$ provided by
Proposition \ref{prop:dimensions_PvH_FHeap_gamma}, as a graded vector
space, $\PvH(\FHeap_\gamma)$ is the $\gamma + 1$-st tensor power of the
underlying vector space of $\SymNC$. Indeed, the $n$-th homogeneous
components of these two spaces have the same dimension. Besides, notice
that since $\FHeap_\gamma$ is by definition a sub-PRO of the PRO obtained
by applying the construction $\MvP$ on a commutative monoid,
$\PvH(\FHeap_\gamma)$ is cocommutative.
\medskip

\section*{Concluding remarks and perspectives}
We have defined a construction $\PvH$ establishing a new link between the
theory of PROs and the theory of combinatorial Hopf algebras, by
generalizing a former construction from operads to bialgebras. By
the way, we have exhibited the so-called stiff PROs which is the most
general class of PROs for which our construction works.
\medskip

By using $\PvH$, we have defined some new and recovered some
already known combinatorial Hopf algebras by starting with very
simple PROs. Nevertheless, we are very far from having exhausted
the possibilities, and it would not be surprising that $\PvH$ could
reconstruct some other known Hopf algebras, maybe in an unexpected basis.
\medskip

Computing the Hilbert series of a combinatorial Hopf algebra is,
usually, a routine work. Nevertheless, in the general case, it is
very difficult to compute the Hilbert series of $\PvH \Pca$ when
$\Pca$ is a free PRO. Indeed, this computation requires to know, given
a free PRO $\Pca$, the series
\begin{equation}
    F_\Pca(t) :=
    \sum_{x \in \Reduit(\Pca)} t^{\deg(x)},
\end{equation}
which seems difficult to explicitly describe in general and is
not known to the knowledge of the authors.
\medskip

As an other perspective, it is conceivable to go further in the study of
the algebraic structure of the bialgebras obtained by $\PvH$. The question
of the potential autoduality of $\PvH \Pca$ depending on some conditions on
the PRO $\Pca$ is noteworthy. A way to solve this problem is to provide
enough conditions on $\Pca$ to endow $\PvH \Pca$ with a bidendriform
bialgebra structure \cite{Foi07}. In such algebraic structures, there are
two products $\prec$ and $\succ$ and two coproducts $\Delta_\prec$ and
$\Delta_\succ$ satisfying some precise axioms. This way to solve this
perspective is based upon the fact that any bidendriform bialgebra is free
and self-dual as a bialgebra \cite{Foi07}.
\medskip

\bibliographystyle{alpha}
\bibliography{Bibliographie}

\end{document}